\newcommand{\Lap}{\mathop{\kern0pt \mathscr{L}}\mathopen{}} 
\newcommand{\xs}{\mf{s}} 
\newcommand{\Hess}{\mathop{\kern0pt \mathrm{D}^2}\mathopen{}} 
\newcommand{\LP}{\mathop{\kern0pt \mathrm{P}}\hspace{-0.20em}\mathopen{}}
\newcommand{\QP}{\mathop{\kern0pt \mathrm{Q}}\mathopen{}} 
\newcommand{\UP}{\mathop{\kern0pt \mathrm{U}}\hspace{-0.15em}\mathopen{}}
\begin{document}

\title{Almost sure local well-posedness for cubic nonlinear Schrödinger
equation with higher order operators}
 
\author{Jean-Baptiste Casteras}
\address{CMAFcIO, Faculdade de Ciências da Universidade de Lisboa, Edificio
C6, Piso 1, Campo Grande 1749-016 Lisboa, Portugal}
\email{jeanbaptiste.casteras@gmail.com}
\tmnote{J.-B.C. supported by FCT --- Fundação para a Ciência e a Tecnologia,
under the project: UIDB/04561/2020}
\providecommand{\todoJBC}[1]{\todo[color=brown!40]{#1}} 

\author{Juraj Földes}
\address{Dept. of Mathematics, University of Virginia, Kerchof Hall,
Charlottesville, VA 22904-4137}
\email{foldes@virginia.edu}
\tmnote{J. F. was partly supported by grant NSF-DMS-1816408}
\providecommand{\todoJF}[1]{\todo[color=green!40]{#1}} 

\author{Gennady Uraltsev}
\address{Dept. of Mathematics, University of Virginia, Kerchof Hall,
Charlottesville, VA 22904-4137}
\email{gennady.uraltsev@gmail.com}
\providecommand{\todoGU}[1]{\todo[color=red!40]{#1}} 

\renewcommand{\shorttitle}{Local well-posedness for cubic NLS}

\begin{abstract}
In this paper, we study the local well-posedness of the cubic Schrödinger
equation:
\[
(i \partial_t - \Lap) u = \pm |u|^2 u \quad \text{on } I \times \R^d,
\]
with randomized initial data, and $\Lap$ being an operator of degree $\sigma
\geq 2$. Using estimates in directional spaces, we improve and
extend known results for the standard Schrödinger equation (i.e. $\Lap =  \Delta$) to any dimension and obtain results under natural assumptions for general $\Lap$.
\end{abstract}

\subjclass{35Q41; 37L50}

\keywords{Schrödinger equation, almost-sure local well-posedness, random
initial data, local smoothing, Schrödinger maximal functional}

{\maketitle}

\section{Introduction}

In this paper, we investigate the local well-posedness of the cubic
nonlinear Schrödinger equation
\begin{equation}\label{eq:cubic-NLS-intro}
\begin{cases}
(i \partial_t - \Lap) u = \pm |u|^2u & \text{ on } I \times \R^d,
\\
u (0) = f \in H_x^S (\R^d)&
\end{cases}
\end{equation}
with a general operator $\Lap$ and randomized initial conditions, see
\eqref{eq:randomization} below.

First, we illustrate our results for the classical cubic Schrödinger
equation, that is, for \eqref{eq:cubic-NLS-intro} with $\Lap = - \Delta$.

\begin{theorem}
\label{thm:main-special}
Fix $d \geq 3$ and 
\[
S > \frac{d - 2}{2} \times \begin{cases}
\frac{1}{3} & \text{ if } d = 3 \,,
\\
\frac{d - 3}{d - 1} & \text{ if } d \geq 4 \,,
\end{cases}
\]
and assume $f \in H_x^S(\R^d)$. If $f^\omega$ is the randomization of
$f$ as in \eqref{eq:randomization}, then almost surely there exists an
open interval $0 \in I$ and a unique solution
\[ u (t) \in e^{i t \Delta} f^{\omega} + 
C (I ; H^{\frac{d - 2}{2}} (\R^d))
\]
to 
\begin{equation}\label{eq:cubic-NLS-intro-1}
\begin{cases}
(i \partial_t + \Delta) u = \pm |u|^2u & \text{ on } I \times \R^d,
\\
u (0) = f^\omega \in H_x^S (\R^d)\,. &
\end{cases}
\end{equation}

\end{theorem}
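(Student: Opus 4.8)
The strategy is a fixed-point argument for the Duhamel formulation around the linear evolution. Writing $u = e^{it\Delta}f^\omega + v$, the equation becomes $v = \Phi(v)$ where $\Phi(v)(t) = \mp i\int_0^t e^{i(t-s)\Delta}\bigl(|e^{is\Delta}f^\omega + v(s)|^2(e^{is\Delta}f^\omega + v(s))\bigr)\,ds$, and the goal is to show $\Phi$ is a contraction on a ball in $C(I; H^{(d-2)/2})$ for $I$ small (random, depending on $\omega$), after which uniqueness and persistence of regularity follow in the standard way. Expanding the cubic term gives seven types of trilinear contributions: those purely in $v$, those purely in the random linear data $F := e^{it\Delta}f^\omega$, and the mixed ones. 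The $vvv$ term is handled by the deterministic $H^{(d-2)/2}$ Strichartz/product estimates (since $(d-2)/2 > d/2 - 1$ is borderline and one needs the right dispersive smoothing, which is where the directional spaces from the earlier part of the paper enter). The crucial input is probabilistic: after randomization, $F$ almost surely enjoys improved space-time integrability and local-smoothing bounds beyond what $f \in H^S_x$ deterministically gives — quantitatively, $\|F\|_{X} \lesssim_\omega \|f\|_{H^S_x}$ in the relevant directional/Strichartz norm $X$, with the gain in derivatives exactly accounting for the stated threshold on $S$. This is the analogue of the Bourgain/Burq–Tzvetkov improvement, and the precise exponent $\frac{d-2}{2}\cdot\frac{d-3}{d-1}$ (resp. $\frac13$ for $d=3$) should come out of optimizing the interpolation between the $L^2$-based randomization gain and the sharp Schrödinger maximal-function / local-smoothing estimates.

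Concretely, the steps I would carry out are: (i) fix the resolution/data spaces — the solution space $Y := C(I; H^{(d-2)/2}_x) \cap (\text{auxiliary directional Strichartz space})$ and an ambient space $Z$ for $F$ capturing the probabilistic gains; (ii) prove the deterministic trilinear estimate $\|\Phi(v_1) - \Phi(v_2)\|_Y \lesssim (\|v_1\|_Y + \|v_2\|_Y + \|F\|_Z)^2 \|v_1 - v_2\|_Y + (\text{terms with at least one }F)$, using Hölder in the directional spaces, Sobolev embedding, and the dispersive estimates, with the small factor $|I|^\theta$ extracted from time integration; (iii) prove the probabilistic estimate $\mathbb{P}(\|F\|_{Z} > \lambda) \le C e^{-c\lambda^2/\|f\|_{H^S_x}^2}$ via Khintchine/Gaussian hypercontractivity applied unit-by-unit in frequency, so that $\|F\|_Z < \infty$ almost surely; (iv) on the event $\{\|F\|_Z \le R_\omega\}$, choose the ball radius $\rho \sim R_\omega$ and then $|I| = |I_\omega|$ small enough that $\Phi$ maps the $\rho$-ball of $Y$ into itself and contracts — here the terms linear and quadratic in $v$ are absorbed by smallness of $|I|^\theta$, while the purely-$F$ term $\int_0^t e^{i(t-s)\Delta}(|F|^2F)$ must be shown to lie in $H^{(d-2)/2}$ with small norm, again by time-smallness; (v) conclude existence, uniqueness in the ball, and continuity in time, and note $u - F \in C(I;H^{(d-2)/2})$.

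The main obstacle is step (ii)/(iv) for the terms that mix the rough random data $F$ (which only lives in $H^S$ with $S$ well below $(d-2)/2$) with the smooth remainder $v$: one must place the derivatives on $v$ and the low regularity on $F$, so the needed bound is of the shape $\big\||\nabla|^{(d-2)/2}(F \bar F v)\big\|_{L^1_t L^2_x(I)} \lesssim |I|^\theta \|F\|_Z^2 \|v\|_Y$, which forces $F$ to be controlled in a space that simultaneously has good space-time integrability (for the Hölder pairing against $v \in L^\infty_t H^{(d-2)/2}_x \hookrightarrow$ a dispersive space) and enough regularity to survive the frequency interactions where $F$ is at high frequency. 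This is exactly why the directional spaces and the refined Schrödinger maximal function estimates from the earlier sections are needed, and why the dimensional restriction $d \ge 3$ and the specific form of the threshold appear. The purely-random term in step (iv) is the second delicate point, since there is no $v$ to carry derivatives; here one relies on the full strength of the probabilistic local-smoothing gain plus $|I|^\theta$ smallness, and this is presumably where the $d=3$ case bifurcates from $d \ge 4$.
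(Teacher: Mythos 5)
Your decomposition $u = e^{it\Delta}f^\omega + v$ and the overall scheme — probabilistic gain for $F$, deterministic fixed point for $v$ in a directionally-refined space — match the paper's strategy, and your observation that the $v\bar FF$ and $FFF$ interactions are the delicate ones is correct. The paper proves \Cref{thm:main-special} as a special case of \Cref{thm:main} with $\sigma = 2$, and the engine is exactly: \Cref{prop:Y-bounds} for the a.s.\ bound on $F$ in $Y^{S,\epsilon}$, \Cref{prop:exlocforced} for the contraction, and \Cref{cor:main-nonlinear-estimate} (built on \Cref{prop:fixed-frequency-4linear} and \Cref{lem:bilinear-2-2-bound}) for the nonlinear bound.

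However, there is a genuine gap in the functional framework you propose for estimating the Duhamel term. You write the key trilinear estimate in the form
\[
\big\||\nabla|^{\frac{d-2}{2}}(F\bar F v)\big\|_{L^1_t L^2_x(I)} \lesssim |I|^\theta \|F\|_Z^2 \|v\|_Y \,,
\]
i.e.\ placing the forcing in the classical dual Strichartz slot $L^1_tL^2_x$. This is precisely the framework of Bényi--Oh--Pocovnicu, which only yields $S > \frac{d-1}{d+1}\cdot\frac{d-2}{2}$ and hence does not reach the threshold $\frac{d-3}{d-1}\cdot\frac{d-2}{2}$ claimed in the theorem for $d\ge 5$. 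The paper's decisive move is to instead control the forcing in the dual of the directional Besov-type space $X^{\xs,\epsilon}(I)$ defined in \eqref{eq:defn:X-norm}--\eqref{eq:defn:X-norm-dual}: one tests the cubic monomial against a unit vector $v_* \in X^{\epsilon}_{N_4}(I)$ and reduces, by Cauchy--Schwarz, to two \emph{bilinear} $L^2_tL^2_x$ estimates (\Cref{lem:bilinear-2-2-bound}) which pair a high-frequency local-smoothing factor $\|\UP_{e_l}^{\Lap}\LP_{N_+}h_+\|_{L_{e_l}^{\infty,2/(1-\epsilon)}}$ against a low-frequency directional-maximal factor. The derivative bookkeeping $N_+^{-(\sigma-1)/2}N_-^{(d-1)/2}$ etc.\ from these bilinear pairings is what actually produces the exponent $\frac{d-3}{d-1}$; Hölder against $L^1_tL^2_x$ cannot. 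Moreover, to transfer the dual-norm estimate to the retarded Duhamel integral $\int_0^t e^{-i(t-s)\Lap}h(s)\,ds$ one needs a Christ--Kiselev argument (\Cref{lem:linear-evolution-christ-kiselev}), which is nontrivial here because $X^{\epsilon}_N$ is not $L^2$-based; you do not mention this, and without it the duality step does not close.

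A second, smaller discrepancy: your contraction relies on extracting an explicit factor $|I|^\theta$ from time integration. The paper does not do this. Instead it uses \Cref{lem:norm-continuity-t} (continuity of $t\mapsto \|F\|_{Y^{S,\epsilon}([0,t])}$) to shrink $I^\omega$ until $\|F\|_{Y^{S,\epsilon}(I^\omega)}\le\delta$ for a fixed small $\delta$, and then closes the contraction purely from the cubic smallness $\delta^3 \ll \delta$. This distinction matters: near the endpoint $S\downarrow S_{\min}$ the nonlinear estimates are used at exponents where a direct $|I|^\theta$ gain is not available, so the paper's continuity-based shrinkage is not merely cosmetic but a genuinely more robust way to obtain smallness.
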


As detailed below, \Cref{thm:main-special} improves known
results in dimensions \(d\geq 5 \); furthermore, our general
\Cref{thm:main} extends \Cref{thm:main-special} to a large
class of operators $\Lap$, improving on existing results in all
dimensions.  We begin by briefly reviewing background and known
results for \eqref{eq:cubic-NLS-intro} with fixed deterministic
initial condition $f$. The main operators of interest are
$\Lap = -\Delta$ yielding classical Schrödinger equation, and
$\Lap = \Delta^2 - \mu \Delta$, $\mu \in \{- 1, 0, 1\}$ leading to the fourth order
Schrödinger equation with mixed dispersion introduced by Karpman and
Shagalov {\cite{karpmanStabilitySolitonsDescribed2000}} (see also
{\cite{karpmanStabilizationSolitonInstabilities1996}}).

Let
$S_{\mrm{crit}} \eqd \frac{d - \sigma}{2}$ be the special value called the energy critical
exponent; for initial data $f \in H^S (\R^d)$, we say that the Cauchy
problem \eqref{eq:cubic-NLS-intro} is
\[ 
\begin{cases}
\text{subcritical}   & \text{ if } S > S_{\mrm{crit}},     \\
\text{critical}      & \text{ if } S = S_{\mrm{crit}},     \\
\text{supercritical} & \text{ if } S < S_{\mrm{crit}} \,.
\end{cases} 
\]
The relevance of $S_{\mrm{crit}}$ can be seen by neglecting lower order
terms, that is, by assuming that $\Lap = (- \Delta)^{\sigma / 2}$. Then,
\eqref{eq:cubic-NLS-intro} possesses a natural scaling symmetry: if
$u$ satisfies the equation in \eqref{eq:cubic-NLS-intro}, then
\[
u_{\lambda} (t, x) = \lambda^{\frac{\sigma}{2}} u (\lambda^{\sigma} t, \lambda x), \qquad \lambda > 0,
\]
also satisfies the same equation. In addition,
\[
\|u_{\lambda} (0)\|_{\dot{H}^S} = \lambda^{S - S_{\mrm{crit}}} \|u (0)\|_{\dot{H}^S} = \lambda^{S  - S_{\mrm{crit}}} \|f\|_{\dot{H}^S} \,,
\]
where $\| \cdot \|_{\dot{H}^S}$ denotes the homogeneous Sobolev norm (see
\Cref{sec:notation} for the definition).

In the subcritical or critical regime, local in time solutions can be
constructed using Strichartz estimates and a classical fixed point
argument.  We refer to
{\cite{cazenaveSemilinearSchrodingerEquations2003,cazenaveCauchyProblemCritical1990,collianderGlobalWellposednessScattering2008,ryckmanGlobalWellposednessScattering2007}}
for results concerning the classical nonlinear Schrödinger equation
and to
{\cite{pausaderGlobalWellposednessEnergy2007,pausaderMasscriticalFourthorderSchrodinger2010}}
for results on the fourth order one. On the other hand, in the
supercritical regime, \eqref{eq:cubic-NLS-intro} is ill-posed by a
result of Christ, Colliander, and Tao
{\cite{christIllposednessNonlinearSchrodinger2003}}.

From a practical perspective, ill-posedness is observable only if it
does not vanish after an introduction of small fluctuations. Unlike
the Schrödinger equation, the initial conditions often originate in
measurements, which are naturally susceptible to errors that are
inherently random.  We use a standard randomization of initial data
based on a unit-scale decomposition of frequency space. The unit scale
in frequency can be thought of as characteristic scale of our
measurements or of the experiment at hand.  Other randomization
approaches have been investigated in
{\cite{burqLongTimeDynamics2013,dengTwodimensionalNonlinearSchrodinger2012,thomannRandomDataCauchy2009,spitzAlmostSureLocal2023}}.
The scale is characterized by the $\psi \in C_c^{\infty} (\R^d)$, which is an
even, non-negative cut-off function supported in the unit-ball of
$\R^d$ centered at $0$, such that
\[
\sum_{k \in \Z^d} \psi (\xi - k) = 1
\]
for all $\xi \in \R^d$. Then, for $f \in H^S_x (\R^d)$, we define the
randomization of $f$ by
\begin{equation}\label{eq:randomization}
f^{\omega} = \sum_{k \in \Z^d} g_k (\omega) \QP_{k} f\,. 
\end{equation}
Here $(g_k)_{k \in \Z^d}$ is a sequence of i.i.d zero-mean complex random
variables with finite moments of all orders on a probability space
$(\Omega, \mathcal{A}, \mathbb{P})$ (for example \(\mathcal{N}(0,1;\C)\) Gaussian variables). 
The operators $\QP_{k}$ are unit scale frequency approximate projection operators given on the frequency side by 
\begin{equation}\label{eq:proj-unit-scale}
\Fourier (\QP_{k} f) (\xi) = \psi (\xi - k)\Fourier(f) (\xi),\qquad \text{for} \quad \xi \in \R^d,
\end{equation}
where $\Fourier (f)$ stands for the Fourier transform of $f$. 

The randomization \eqref{eq:randomization} does not improve the
differentiability properties of $f$: if
$f \in H^S (\R^d) \setminus H^{S + \epsilon} (\R^d)$ for some
$\epsilon > 0$, then
$f^{\omega} \in H^S (\R^d) \setminus H^{S + \epsilon} (\R^d)$ almost surely, see
{\cite{burqRandomDataCauchy2008a}}. In particular, if the problem was
super-critical for $f$, then it stays almost surely super-critical for
the initial condition $f^\omega$. However, since $\QP_{k} f$ is localized on a
set of bounded diameter in Fourier space, the Bernstein inequality
(see Lemma \ref{lem:bernstein} below) implies for any
$1 \leq r_1 \leq r_2 \leq \infty$ and any $k \in \Z^d$ that
\begin{equation}\label{eq:unit-scale-bernstein}
\|\QP_{k} f\|_{L^{r_2}_x (\R^d)} \leq C_{r_1,r_2} \|\QP_{k} f\|_{L^{r_1}_x (\R^d)}
\end{equation}
with a constant $C$ independent of $k$. We exploit \eqref{eq:unit-scale-bernstein} to show that $f^\omega$ and
$e^{- i t \Lap} f^{\omega}$ possess better local integrability properties
of than $f$ and $e^{- i t \Lap} f$, respectively (see \Cref{lem:proba1}). 

The first results on the probabilistic well-posedness were proved by
Bourgain
{\cite{bourgainPeriodicNonlinearSchrodinger1994,bourgainInvariantMeasuresDdefocusing1996}}
and McKean \cite{mckeanStatisticalMechanicsNonlinear1995}, who showed that a suitable randomization
of the initial data can be used to construct local or even global
solutions in the supercritical regime.  Specifically, they proved an
almost sure local existence of solutions of
\eqref{eq:cubic-NLS-intro-1} on torus. Also, with a help of invariant
(Gibbs) measures the local solutions were extended to global ones (see
also
{\cite{lebowitzStatisticalMechanicsNonlinear1988,syAlmostSureGlobal2021}}
for other results in this direction). There is a number of results for
the randomized nonlinear Schrödinger equation on torus, (see survey
\cite{nahmodNonlinearSchrodingerEquation2015} and references therein) but the techniques are very
different compared to the non-compact case of $\R^d$.  For example,
local smoothing obtained in \Cref{lem:dir-local-smoothing} is not
expected to hold true on torus. Randomization techniques on the torus
were used for other equations such as Navier-Stokes equation
\cite{zhangRandomDataCauchy2012}, nonlinear wave equation
\cite{burqRandomDataCauchy2008}, or Hartree NLS
\cite{dengInvariantGibbsMeasure2021}.

The literature contains several well-posedness results for
\eqref{eq:cubic-NLS-intro-1} on $\R^d$ using our randomization. Bényi,
Oh, and Pocovnicu {\cite{benyiProbabilisticCauchyTheory2015}} proved
the almost sure local well-posedness of \eqref{eq:cubic-NLS-intro} for
$d \geq 3$ and $S > \frac{d - 1}{d + 1} \frac{d - 2}{2}$ in the following sense:
there exist $c, C, \gamma > 0$ such that for each $0 < T \ll 1$, there exists
a set $\Omega_T \subset \Omega$ with the following properties :
\begin{itemize}
\item $\mathbb{P} (\Omega_T^c) < \mrm{Cexp} (- \frac{c}{T^{\gamma}
      \|f\|_{H^S}^2})$.

\item For almost each $\omega \in \Omega_T$, there exists a unique solution u to
      \eqref{eq:cubic-NLS-intro} with $u (0) = f^{\omega}$ in the class
      \[ e^{- i t \Delta} f^{\omega} + C ([- T, T] ; H^{S_{\mrm{crit}}} (\R^d)) \subset C
      ([- T, T] ; H^S (\R^d)) . \]
\end{itemize}
Later, Brereton {\cite{breretonAlmostSureLocal2018}} obtained analogous
results for $\Lap = - \Delta$ and quintic non-linearity. When $d = 3$, Shen,
Soffer, and Wu {\cite{shenAlmostSureWellPosedness2023}} 
recently, obtained
the local well-posedness of \eqref{eq:cubic-NLS-intro-1}
for $S \geq \frac{1}{6}$ improving
{\cite{benyiProbabilisticCauchyTheory2015}}. All described results rely on
a fixed point argument for operators on variants of the $X^{s, b}$ spaces
adapted to the variation spaces $V^p$ and $U^p$ introduced by Koch, Tataru,
and collaborators
{\cite{hadacWellposednessScatteringKPII2009,herrGlobalWellposednessEnergycritical2011,kochDispersiveEquationsNonlinear2014}}.
The result of {\cite{benyiProbabilisticCauchyTheory2015}} was also improved by
Dodson, Lührmann, and Mendelson {\cite{dodsonAlmostSureLocal2019}} when $d =
4$, which corresponds to the energy-critical Schrödinger equation. More
precisely, they proved the local well-posedness of \eqref{eq:cubic-NLS-intro-1}
when  $d = 4$ and $S > \frac{1}{3}$. It is important to
note, that instead of using variants of $X^{s, b}$,
{\cite{dodsonAlmostSureLocal2019}} used a directional norm denoted by $L_e^{a,
b}$, $e \in S^{d - 1} \subset \R^d$, introduced by Ionescu and Kenig
{\cite{ionescuLowregularitySchrodingerMaps2006,ionescuLowregularitySchrodingerMaps2007}}
to prove well-posedness for the Schrödinger map equation.

The only local well-posedness result for
general \eqref{eq:cubic-NLS-intro} with a 
higher order operators $\Lap$ and
randomized data was obtained in {\cite{duongdinhRandomDataTheory2021}} for
$\Lap = | \Delta |^2 - \mu \Delta$, $\mu \geq 0$, $d \geq 5$ under the assumption $S > \max
\Big\{ \frac{(d - 1) (d - 4)}{2 (d + 5)}, \frac{d - 4}{4} \Big\}$.

The idea of the mentioned results is to subtract the linear evolution
of the initial condition given by $e^{- i t \Delta} f^{\omega}$ which presumably
has the worst regularity. Then, the regularity level $S$ is chosen
such that the reminder is smooth enough to belong to a sub-critical
space, where the fixed point argument can be used.

By using iterative procedure based on a partial power expansion, one
can subtract higher order terms as in Bényi, Oh, and Pocovnicu
{\cite{benyiHigherOrderExpansions2019}} to obtain local well-posedness
for any $S > \frac{1}{6}$. Note that the condition $S > \frac{1}{6}$ was
improved (by including the endpoint) to $S \geq \frac{1}{6}$ in the mentioned
result \cite{shenAlmostSureWellPosedness2023} without need for
iterations.

\Cref{thm:main-special}, the special case of our general
\Cref{thm:main}, recovers or improves upon the existing results in any
dimension with a unified approach. In particular, we obtain the
optimal condition from {\cite{dodsonAlmostSureLocal2019}} for $d = 4$,
we reproduce the result in {\cite{shenAlmostSureWellPosedness2023}}
when $d = 3$ except for the endpoint regularity case, and we improve
{\cite{benyiProbabilisticCauchyTheory2015}} (or any other existing
result) for $d \geq 5$. In addition, \Cref{thm:main-special} allows for
generalizations to other operators, as detailed below, and in a
forthcoming work \cite{CFU23} we show how to use our framework to
further lower the regularity requirements by including higher order
expansions. We remark that our general \Cref{thm:main} improves
all existing results for higher order operators in any dimensions (see
for example \cite{duongdinhRandomDataTheory2021}).

Next, we specify our
assumptions on $\Lap$ and formulate our general result.  We abuse
notation and denote by $\Lap$ both the differential operator and its
symbol so that
\[ 
\Lap f (x) = \int_{\R^d} e^{2 \pi i x \xi} \Lap (2 \pi i \xi) \FT{f} (\xi) \dd \xi.
\]
We assume that the symbol is real-valued, and there is a real
$\sigma \geq 2$ such that for all $\xi$ large enough one has (assumptions are on
the symbol)
\begin{align} \label{eq:symbol-bddness}
&
\begin{aligned}
& |\partial^{\alpha}\Lap(\xi)|\lesssim | \xi |^{\sigma - |\alpha|}\quad\textrm{ for all } |\alpha|\leq\Big\lfloor \frac{d}{\sigma} \Big\rfloor+2,
\\ &  | \nabla \Lap (\xi) |\gtrsim | \xi |^{\sigma - 1}
\end{aligned} 
\\ \label{eq:symbol-curvature} &
| \xi |^{d (\sigma - 2)} \lesssim \big| \det \Hess \Lap (\xi) \big|  \lesssim | \xi |^{d (\sigma - 2)} \,, 
\end{align}
where $\Hess \Lap (\xi)$ is the Hessian of $\Lap (\xi)$ and
$\lfloor z \rfloor$ denotes the integer part of $z$, that is, the largest integer
smaller than $z$. These conditions are trivially satisfied if
$\Lap = | \Delta |^{\sigma / 2} + \Lap^{\sharp}$ for $\sigma \geq 2$ and
$\Lap^{\sharp}$ is a lower order operator with real, smooth symbol.

For the differentiability $S$ of initial data, we show, quite
interestingly, that there are two different regimes:
\begin{equation}\label{eq:Smin}
S_{\min} (\sigma, d) \eqd \frac{d - \sigma}{2} \times 
\begin{cases}
\frac{1}{3}                    & \text{ if } \frac{d + 2}{3} \leq \sigma,  \\
\frac{d + 1 - 2 \sigma}{d - 1} & \text{ if } \sigma \leq \frac{d + 2}{3} .
\end{cases} 
\end{equation}

Our main result reads as follows:

\begin{theorem}\label{thm:main}
Let $d > \sigma \geq 2$, and let $\Lap$ be a differential
operator whose Fourier symbol $\Lap$ is smooth, real, and satisfies
\eqref{eq:symbol-bddness} and \eqref{eq:symbol-curvature}. For any
$S > S_{\min} (\sigma, d)$ with $S_{\min}$ as in \eqref{eq:Smin} assume
$f \in H_x^S (\R^d)$ and let $f^\omega$ be the randomization of $f$ as in
\eqref{eq:randomization}.  Then, for a.e. $\omega \in \Omega$, there exists an
open interval $0 \in I$ and a unique solution
\[ 
  u (t) \in e^{- i t \Lap} f^{\omega} + C (I ; H^{S_{\mrm{crit}}} (\R^d))
\]
to
\begin{equation} \label{eq:eqintro}
\begin{cases}
(i \partial_t - \Lap) u = \pm |u|^2 u & \text{ on } I \times \R^d \\
u (0) = f^{\omega} .                  &
\end{cases} 
\end{equation}
\end{theorem}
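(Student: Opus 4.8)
The plan is to recast \eqref{eq:eqintro} via the Duhamel formula for the perturbation $v \eqd u - e^{-it\Lap}f^\omega$, so that $v$ solves
\[
v(t) = \mp i \int_0^t e^{-i(t-s)\Lap}\, |w|^2 w\,(s)\,\dd s, \qquad w \eqd v + e^{-it\Lap}f^\omega,
\]
and to run a contraction mapping argument for $v$ in a suitable subspace of $C(I; H^{S_{\mrm{crit}}}(\R^d))$ built out of the $U^p$/$V^p$-type spaces and the directional norms $L_e^{a,b}$ appearing in the introduction. The solution map has a purely deterministic part (the nonlinear terms involving at least one factor of $v$, which live at regularity $S_{\mrm{crit}}$ and are handled by the subcritical/critical Strichartz and bilinear/transversality estimates for the dispersive propagator $e^{-it\Lap}$ under \eqref{eq:symbol-bddness}–\eqref{eq:symbol-curvature}) and a probabilistic source term, namely the fully linear-in-$f^\omega$ cube $|e^{-it\Lap}f^\omega|^2 e^{-it\Lap}f^\omega$, together with the mixed terms that carry one or two factors of $e^{-it\Lap}f^\omega$. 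The crux is therefore to show that, almost surely, $e^{-it\Lap}f^\omega$ lies in the relevant critical-regularity space after one Duhamel iteration, i.e. that the trilinear expression Duhamel-integrates into $C(I; H^{S_{\mrm{crit}}})$.

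Concretely, I would proceed as follows. First, fix $S > S_{\min}(\sigma,d)$ and use \Cref{lem:proba1} to upgrade the randomized data: almost surely, $\QP_k f^\omega$ enjoys the Bernstein gain \eqref{eq:unit-scale-bernstein} with square-summable-in-$k$ constants weighted by $\langle k\rangle^S$, so that $e^{-it\Lap}f^\omega$ gains local-in-space $L^r$ integrability for $r>2$ beyond what $H^S$ alone gives. Second, decompose $f^\omega = \sum_k \QP_k f^\omega$ dyadically in $|k|\sim N$ and estimate the frequency-localized pieces of $e^{-it\Lap}\QP_k f^\omega$ using the directional local smoothing estimate \Cref{lem:dir-local-smoothing} and the Schrödinger maximal/Strichartz bounds that the curvature hypothesis \eqref{eq:symbol-curvature} provides for the degree-$\sigma$ symbol; the two cases in \eqref{eq:Smin} will emerge precisely from optimizing the interpolation between the maximal-function estimate (dominant when $\sigma$ is large relative to $d$, giving the $\tfrac13$ exponent) and the directional local-smoothing/Strichartz estimate (dominant when $\sigma \le \tfrac{d+2}{3}$, giving the $\tfrac{d+1-2\sigma}{d-1}$ exponent). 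Third, feed these bounds into a trilinear estimate for $\int_0^t e^{-i(t-s)\Lap}(\prod g_j)\,\dd s$ where each $g_j$ is either $e^{-it\Lap}f^\omega$ or an element of the critical solution space, summing the resulting dyadic-in-$N$ series — here the gain $S - S_{\min} > 0$ gives room for summation, and the short time interval $|I|$ provides the small factor $|I|^\theta$, $\theta>0$, that makes the map a contraction. Fourth, conclude existence and uniqueness in $e^{-it\Lap}f^\omega + C(I; H^{S_{\mrm{crit}}})$ by the Banach fixed point theorem, with $I$ and the exceptional null set depending on $\omega$ only through $\|f^\omega\|_{H^S}$ and the almost surely finite probabilistic norms from \Cref{lem:proba1}.

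I expect the main obstacle to be the trilinear (and the harder bilinear-in-$f^\omega$) estimate that places the purely random cube into $C(I; H^{S_{\mrm{crit}}})$: one must carefully orchestrate the frequency interactions (high–high–low vs. high–low–low), choose good directions $e\in S^{d-1}$ transverse to the relevant pieces of the characteristic surface $\{\tau = \Lap(\xi)\}$, and exploit the directional norms $L_e^{a,b}$ so that the local smoothing of \Cref{lem:dir-local-smoothing} compensates exactly the derivative loss inherent in reaching $S_{\mrm{crit}}$ from $S$. The delicate points are that the curvature of $\Lap$ is only comparable to $|\xi|^{d(\sigma-2)}$ rather than exactly the model case, so one needs the transversality/bilinear estimates to be robust under this two-sided bound, and that the bilinear term (two random factors, one deterministic factor at critical regularity) must be closed simultaneously with the contraction, since it is linear in the unknown $v$; this is where the smallness in $|I|$ and the probabilistic gain must be balanced most carefully. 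The remaining terms — those with zero or one random factor — are essentially the deterministic subcritical theory and should follow from standard $U^p$/$V^p$ multilinear estimates once the dispersive inputs for $e^{-it\Lap}$ are in place.
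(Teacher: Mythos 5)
Your high-level plan is aligned with the paper's: subtract the free evolution $F=e^{-it\Lap}f^\omega$, solve for the remainder $v$ by a contraction mapping in a critical-regularity space, gain integrability for $F$ via the unit-scale randomization and large-deviation bounds (\Cref{lem:proba1}), and supply the deterministic input with directional maximal and directional local smoothing estimates for $e^{-it\Lap}$ under \eqref{eq:symbol-bddness}--\eqref{eq:symbol-curvature}. That much is the correct strategy. However, several technical claims in your proposal do not match (and in places contradict) what the paper actually does, and they are not merely presentational.

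First, you propose running the fixed point in a space ``built out of the $U^p/V^p$-type spaces and the directional norms,'' and you close by saying the terms with at most one random factor ``should follow from standard $U^p/V^p$ multilinear estimates.'' The paper explicitly \emph{avoids} $U^p/V^p$ variation spaces (which it attributes to the Bényi--Oh--Pocovnicu line of work and notes make higher-dimensional generalizations less efficient). Instead, the solution space $X^{\xs,\epsilon}(I)$ in \eqref{eq:defn:X-norm} is a frequency-Besov combination of admissible Strichartz norms, directional maximal norms $N^{-\frac{d-1}{2}}\|\cdot\|_{L_{e_l}^{2/(1-\epsilon),\infty}}$, and directional local smoothing norms $N^{\frac{\sigma-1}{2}}\|\UP_{e_l}^\Lap\cdot\|_{L_{e_l}^{\infty,2/(1-\epsilon)}}$; the source $F$ is measured in the companion space $Y^{S,\epsilon}(I)$ of \eqref{eq:defn:Y-norm}. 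The nonlinearity is not handled by a $U^p/V^p$ transference principle, but by placing $H=|F+v|^2(F+v)$ in the dual space $(X^{\xs+\tilde\epsilon,\epsilon})^*$ and proving the a priori Duhamel bound $\|v\|_{X^{\xs,\epsilon}}\lesssim\|H\|_{X^{\xs+|O(\epsilon)|,\epsilon*}}$ via a Christ--Kiselev concatenation argument (\Cref{prop:main-linear-estimate}, \Cref{lem:linear-evolution-christ-kiselev}). This dual-space formulation is the central technical device that replaces the $G$-space of Dodson--Lührmann--Mendelson and enables the general $\Lap$ and high-$d$ results; your sketch does not identify it, and without it the multilinear step you gesture at is not organized in a way that closes.

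Second, your account of where the two regimes in $S_{\min}$ come from --- ``interpolation between the maximal-function estimate (giving $\tfrac13$) and the directional local-smoothing/Strichartz estimate (giving $\tfrac{d+1-2\sigma}{d-1}$)'' --- is not how the threshold arises. In the paper it is a case analysis of which factors in the quadrilinear pairing are $F$, $v$, or the dual test function $v_*$ (Cases 1--4 of \Cref{prop:fixed-frequency-4linear}). The constraint $S>\frac{d-\sigma}{6}$ comes from the all-$F$ cube tested against $v_*$ (Case 4), and $S>\big(\tfrac{d-\sigma}{d-1}\big)\big(\tfrac{d+1}{2}-\sigma\big)$ comes from the $(v_*,F,v,v)$ interaction (Case 2), each estimated by pairing the bilinear $L^2_tL^2_x$ bounds of \Cref{lem:bilinear-2-2-bound}; the determination of which constraint dominates reduces to $\sigma\gtrless\frac{d+2}{3}$. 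There is no single interpolation parameter that gives the two regimes. Finally, your contraction uses a power $|I|^\theta$ for smallness; the paper instead shrinks the interval so that $\|F\|_{Y^{S,\epsilon}(I)}\le\delta$ via the continuity in \Cref{lem:norm-continuity-t}, since the critical $X^{\xs,\epsilon}$ norms do not supply a free power of $|I|$. These gaps mean your proposal, as written, sketches a recognizable strategy but does not identify the functional framework or the actual source of the regularity threshold that make the proof go through.
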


The special case $\Lap = -\Delta$ was formulated in Theorem
\ref{thm:main-special} and when $\Lap = \Delta^2 \pm \mu \Delta$, we obtain local
well-posedness if
\[ S_{\min} (4, d) = \frac{d - 4}{2} \times \begin{cases}
\frac{1}{3}         & \text{ if } 4 < d \leq 10, \\
\frac{d - 7}{d - 1} & \text{ if } d \geq 10.
\end{cases} \]

Let us briefly comment on assumptions of Theorem \ref{thm:main}.  The
condition $d > \sigma$ ensures that we are in the super-critical regime (we
do not consider $S < 0$, because there the functions in $H^S$ are not
defined point-wise and the non-linearly $|u|^2u$ has to be interpreted
differently).  If $d \leq \sigma$, that corresponds to any \(S>0\) being energy
subcritical, the well-posedness results can be obtained using softer
techniques. Our assumptions on $\Lap$ are satisfied, for example, by
the operator $\Lap = \Delta^2 \pm \Delta$, or by symbols that may fail to be
convex. Furthermore, our conditions are stable with respect to
perturbations by lower-order terms.

A careful inspection of our methods gives explicit estimates from
below on the time of existence of solutions, similar to ones mentioned
above and in \cite{benyiProbabilisticCauchyTheory2015}. There are
several techniques that extend the local well-posedness theory to
global well-posedness with high probability for small initial
data. These techniques and results are closely related to
scattering. There are, however, obstacles to such approaches for the
class of operators $\Lap$ we consider. First, since we work in
the energy supercritical regime, solutions do not satisfy a priori
global energy estimates.  Second, there are no global in time
dispersive estimates even for the linear evolution.  In particular, if
the symbol of $\Lap$ has vanishing curvature on non-trivial sets
scattering behavior is unlikely.  Overall, for the clarity and length
of the manuscript we decided not to include global-in-time existence
results, which would require us to restrict the class of operators
considered. We also omit these explicit considerations on the time of
existence for local solutions.

The proof of \Cref{thm:main} starts by subtracting the free (random)
evolution $F = e^{- i t \Lap} f^{\omega}$, which, heuristically, has the
lowest regularity. This allows us to transform \eqref{eq:eqintro} into
a forced cubic NLS equation for the remainder term $v=u-F$:
\begin{equation}\label{eq:NLS-forced}
\begin{cases}
(i \partial_t - \Lap) v = \pm |F + v|^2 (F + v) := H(t, x), \\
v (0) = 0 \,.
\end{cases} 
\end{equation}
Due to stochastic cancellation effects, $F$ has almost surely better space-time
integrability properties (but not smoothness) then its
deterministic counter-part. In fact, the problem becomes sub-critical
allowing one to find $v$ by finding the fixed point by the Banach fixed point theorem.

Inspired by a functional framework of
{\cite{dodsonAlmostSureLocal2019}}, we use directional spaces and
prove the contraction of an appropriate map in a sub-critical space
denoted by $X^{\xs + \tilde{\epsilon}, \epsilon}$. The space
$X^{\xs + \tilde{\epsilon}, \epsilon}$ contains classical Strichartz, directional maximal
type, and directional local smoothing type components. The central
idea relies on the observation, that if
$v \in X^{\xs + \tilde{\epsilon}, \epsilon}$, then the forcing term
$H$ belongs to the dual space
$(X^{\xs + \tilde{\epsilon}, \epsilon})^*$. This differs from the approach of
\cite{dodsonAlmostSureLocal2019}, where the local smoothing term is
absent from the norm of the space
$X^{\xs + \tilde{\epsilon}, \epsilon}$, and $H$ needs to be controlled in an
space denoted $G$ (see \cite{dodsonAlmostSureLocal2019}). Controlling
the forcing term $H$, in a dual space of
$X^{\xs + \tilde{\epsilon}, \epsilon}$ rather than $G$ proves more natural and appropriate for
generalizations to higher dimensions. Finally, the extension to more
general operators $\Lap$ requires a finer analysis of the oscillatory
integrals occurring in the study of the free evolution of the
Schrödinger equation; some of these arguments have a micro-local
flavor (see \Cref{lem:dir-local-smoothing}).

The proof of \Cref{thm:main} can be summarized by the following steps.

\begin{description}
\item[Step 1] We control the linear evolution $F$ in the norm of the
space $Y^{S, \epsilon}$, which has regularity $S > S_{\min}$, and in
particular $Y^{S, \epsilon}$ is super-critical. The norm of
$Y^{S, \epsilon}$ is based on a dyadic decomposition of Fourier space and on
each dyadic annulus in frequency, the norm of $Y^S$ is an
appropriately weighted combination of classical space-time norms
$L_t^p L_x^q$ and directional norms $L_{e_l}^{a, b}$. The
cancellations stemming from the randomization of initial conditions,
allow us to use a higher integrability exponent in the spatial
directions compared to the classical Strichartz norms.

\item[Step 2] The existence and uniqueness of $v$ is showed by the
fixed point argument in the space $X^{\xs, \epsilon} (I)$. The parameter
$\xs \gtrsim S_{\mrm{crit}} = \frac{d - \sigma}{2}$ indicates differentiability and
$\epsilon > 0$ is a small parameter that allow us to avoid working in
endpoint spaces such as $L^\infty$. The space $X^{\xs, \epsilon}$ is endowed with
a norm based, again, on a dyadic composition of Fourier space; on each
dyadic annulus in frequency, the norm is a combination of Strichartz
admissible $L_t^p L_x^q$ norms and appropriately weighed directional
norms $L_{e_l}^{a, b}$.  The key step is the estimate of $v$ in the
$X^{\xs, \epsilon} (I)$ norm, by the
$(X^{\xs + \tilde{\epsilon}, \epsilon})^*$ norm of $H$ (see \eqref{eq:NLS-forced} for the
definition of $H$).

\item[Step 3] Based on the results described in Step 2, one needs to
control the forcing term $H$ in the
$(X^{\xs + \tilde{\epsilon}, \epsilon})^*$ norm.  More specifically, if
$F \in Y^{S, \epsilon}$ (established in Step 1) and
$v \in X^{\xs, \epsilon}$ (postulated in Step 2), then we show that
$H$ belongs to $X^{\xs+ \tilde{\epsilon}, \epsilon} (I)^*$. Since
$H$ can be viewed as sum of cubic monomials $\mathcal{C}$ in the variables
$F,v,\bar{F},\bar{v}$, controlling any $\mathcal{C}$ in the norm
$\big(X^{\xs+ \tilde{\epsilon}, \epsilon} (I)\big)^*$ requires, by duality, testing
$\mathcal{C}$ against a function with bounded $X^{\xs, \epsilon}$ norm. Thus, the
estimate on $\mc{C}$ is equivalent to suitable quadrilinear estimates,
which we factor through two bilinear estimates mapping into
$L^2_x L^2_t$.  Our bilinear estimates implicitly contain the bilinear
Strichartz estimates of \cite{bourgainRefinementsStrichartzInequality1998, ozawaSpacetimeEstimatesNull1998}. In
this step, we differ from the functional framework of
\cite{dodsonAlmostSureLocal2019}, where a different space appears
instead of $(X^{\xs + \tilde{\epsilon}, \epsilon})^*$, making generalizations less
efficient.
\end{description}

\begin{remark}
As observed in \cite{benyiHigherOrderExpansions2019}, one can 
use higher order multilinear expansions to obtain 
a solution to \eqref{eq:eqintro}. More precisely, one
can consider solutions to \eqref{eq:eqintro} of the form
\[
u=F_{1}+F_{3}+F_{5}+\ldots+F_{2k+1} + v_{k}
\]
with \(v\in C (I ; \dot{H}_x^{\alpha} (\R^d))\) for some $\alpha > \frac{d-\sigma}{2}$, where \(F_{1}=e^{-it\Delta}f^{\omega}\) and
\[
F_{2k+1}\eqd - i \int_0^t e^{- i (t - t') \Lap} |F_{1}+\ldots+F_{2k-1}|^{2} (F_{1}+\ldots+F_{2k-1}) \dd t'.
\]
The functions $F_j$ are chosen so that they cancel out higher order terms 
on the right-hand side of \eqref{eq:cubic-NLS-intro}, which 
are independent of $v$ and $\bar{v}$. 

Our forthcoming paper \cite{CFU23} uses such higher order expansion for \(\Lap=\Delta\) in \(d\in\{3,4\}\) and 
significantly improves the requirements on $S$. Thus, the functional
framework developed in this paper partly serves as the foundation for and efficient treatment of the higher order expansions.

When \(d \gg \sigma \), that is,  when the nonlinearity is `very supercritical', then the monomials $\mathcal{C}$ including $v$ become dominant, and further expansion is less
obvious. 
\end{remark}

\begin{remark}
The \(X^{\xs+ \tilde{\epsilon}, \epsilon} (I)\) norm, used for the fixed point theorem, is a combination of directional as well as classical Strichartz norms. The classical Strichartz norms are included for convenience, because, with minor adjustments, the contraction only needs bounds in the directional norms. 
\end{remark}

\begin{remark}
Our approach crucially relies on gain of derivatives in the
directional local smoothing estimates (see
\Cref{lem:dir-local-smoothing}). These estimates are not expected to
hold on compact domains.
\end{remark}

\begin{remark}
Our results split naturally into a deterministic analysis of
\eqref{eq:cubic-NLS-intro} in directional norms and  
probabilistic estimates that yields improved bound on the
free evolution in the directional norms. 
The former analysis may be of
interest in the general theory of Schrödinger operators, outside
the stochastic setting. We remark that a directional analysis
of Schrödinger equation was done in 
\cite{bennettSharpKplaneStrichartz2018}, however, to our best
knowledge, the results were not applied to 
non-linear Schrödinger equation. 
\end{remark}

The paper is organized as follows.

In \Cref{sec:generalities}, we recall generalities of Fourier analysis
and the Littlewood-Paley theory (dyadic annuli decomposition in
Fourier space).  We state classical Strichartz estimates for free
evolution operator. Then we introduce directional norms $L^{a, b}_e$
and prove directional maximal and local smoothing estimates for the
free evolution operator.  In \Cref{sec:linear-nonhomogeneous}, we use
results of \Cref{sec:generalities} to obtain estimates on solutions of
\eqref{eq:NLS-forced} with a generic forcing term $H$.  The bounds
proved in \Cref{sec:linear-nonhomogeneous} are sufficient to prove all
ideas in Step 2, above.  In \Cref{sec:probabilistic}, we establish
probabilistic estimates for the linear evolution of the randomized
initial data. We also recall several properties of sums of Gaussian
random variables.  \Cref{sec:nonlinear} contains trilinear estimates
that control interactions in the cubic non-linearity.  Finally, in
\Cref{sec:almost-sure-well-posedness} we establish Theorem
\ref{thm:main} by using a fixed point argument.

\Cref{thm:main} requires $d > \sigma \geq 2$ which we implicitly assume
henceforth, and we will not state it explicitly in the statements below.

\subsection{Notation}\label{sec:notation}

\begin{itemize}
\item We define $\N = \{0, 1, \cdots\}$ the set of non-negative integers.
\item In a $d$-dimensional space $\R^d$, we denote
$\{e_1, \cdots, e_d\}$ to be the standard basis.
  \item The Fourier transform of the function $f : \R^d \to \R$ is
  \[ \Fourier(f) (\xi) = \FT{f} (\xi) = \int_{\R^d} f (x) e^{- 2 \pi i \xi x} \dd x \,, \]
  where the dimension $d$ is deduced from the context. By the Fourier
  inversion formula:
  \[ f (x) = \int_{\R^d} \FT{f} (\xi) e^{2 \pi i \xi x} \dd x \]
  
  \item For two expressions $G$ and $H$ we write $G \lesssim H$ if there
  exists a constant $C > 0$ depending only on the fixed parameters of
  the problem such that $G \leq C H$. In particular, we typically assume
  that $C$ is independent of $N$. If $C$ depends on a variable
  $\epsilon$, we use $G \lesssim_\epsilon H$.
  
  \item We write $G \approx H$, if $G \lesssim H$ and $H \lesssim G$.
  
  \item The symbol $O (\epsilon)$ stands for any function $[0, 1) \rightarrow \R$
  such that
  \[ | O (\epsilon) | \lesssim \epsilon \]
  for all $\epsilon \in (0, 1]$. The specific function denoted by $O
  (\epsilon)$ can change from line to line.
  
  \item The open ball with radius $r$ and center $x$ is denoted $B_r (x)$; if $x =
  0$ we simply write $B_r$. The dimension of the ball is to be understood from
  the context.
  
  \item For $p > 1$, $p'$ stands for the dual of $p$, that is,
  $\frac{1}{p'} + \frac{1}{p} = 1$. If $p = 0$, then we set $p' = \infty$.
  
  \item The Japanese bracket $\langle \cdot \rangle$ is defined as $\langle N
  \rangle \eqd (1 + N^2)^{\frac{1}{2}}$.
  
  \item For $S > 0$, we denote $\langle \Delta \rangle^{S / 2}$ the operator with the
  Fourier multiplier $(1 +|4 \pi^{2} \xi^{2}|^2)^{S/4}$, that is,
  $\Fourier (\langle \Delta \rangle^{S / 2} f) (\xi) = \langle 4 \pi^{2} \xi^{2}\rangle^{S/2} \FT{f}(\xi)$. Then,
  $H^S (\R^d)$ denotes the Sobolev space endowed with the semi-norm
  \[
  \|u\|_{H^S (\R^d)}^2 = \| \langle \Delta \rangle^{S / 2} u (x) \|_{L^2 (\R^d) } .
  \]
  
  \item We denote by $\1_A$ the characteristic function of a set $A$, that is, 
  $\1_A (x) = 1$ if $x \in A$ and $\1_A (x) = 0$ otherwise. In addition, if for example 
  $x > y$, then we write $\1_{x > y}$ to indicate the function that
  is equal to $1$ when $x > y$ and vanishes otherwise. The variable
  of the function is to be deduced from the context.
  
  \item We denote by $\spt f \eqd \tmop{cl}\big(\{x \in \R^d : |f(x)| > 0 \}\big)$ the support of the function $f$, where  $\tmop{cl}( A)$ denotes
  the closure of a set $A$. Similarly, we define $\spt \FT{f}$.
  \item The diameter of a set \(A\subset\R^{d}\) is \(\diam(A)\eqd\sup_{x,y\in A}|x-y|\)
\end{itemize}

\section{Generalities}\label{sec:generalities}

In this section, we recall Littlewood-Paley projections, Strichartz
estimates, and we review directional norms $L^{a, b}_e$ introduced by
Ionescu and Kenig
{\cite{ionescuLowregularitySchrodingerMaps2006,ionescuLowregularitySchrodingerMaps2007}}. Then,
we prove new maximal function estimate \eqref{eq:dir-maximal} and a
local smoothing estimate \eqref{eq:dir-local-smoothing} for
$L^{a, b}_e$. Below, the estimate \eqref{eq:dir-local-smoothing}
allows us to `gain' $\frac{\sigma - 1}{2}$ derivatives in our estimates of the
nonlinear terms.

We begin by defining the Littlewood-Paley projections $\LP_{N}$ for
$N \in 2^{\N}$. For a fixed smooth cutoff function
$\phi \in C^{\infty} (\R)$, that is, a function such that
$\phi (\xi) = 1$ for $| \xi | \leq 1$ and $\phi (\xi) = 0$ for
$| \xi | > 1 + 2^{- 100}$, we set
\begin{equation}\label{eq:LP-bump}
\phi_N (\xi) \eqd
\begin{cases}
\phi (\xi) & \text{ if } N = 2^0,
\\
\phi \big( \frac{\xi}{N} \big) - \phi \big( \frac{\xi}{N / 2} \big) & \text{ if } N \in 2^n, n \in \N \setminus \{ 0 \} \, .
\end{cases} 
\end{equation}
Observe that if $N > 1$, the function $\phi_{N}$ is supported on
$\{ \xi : N / 2 \leq | \xi | \leq N (1 + 2^{- 100})\}$ and
$\phi_N(\xi) = 1$ when $(1 + 2^{- 100}) N / 2 < | \xi | < N$.  We define
\begin{equation}\label{eq:LP-projection}
\FT{\LP_{N} f} (\xi) = \phi_N \big(| \xi |\big) \FT{f} (\xi) .
\end{equation}
Note that this projection is different from the one introduced in
\eqref{eq:proj-unit-scale}. Next, we recall the classical Bernstein estimates.

\begin{lemma}\label{lem:bernstein}
For any $1 \leq r_1 \leq r_2 \leq \infty$ it holds that
\[
\| f \|_{L_x^{r_2} (\R^d)} \lesssim \diam \big( \spt \FT{f} \big)^{d \big( \frac{1}{r_1} - \frac{1}{r_2} \big)} \|f\|_{L_x^{r_1} (\R^d)} .
\]
In particular, since $\diam \big( \spt \Fourier(\QP_{k} f) \big) \approx 1$, \eqref{eq:unit-scale-bernstein} holds. We remark that  for $\LP_{N}$
  as in \eqref{eq:LP-projection}, $\diam \big( \spt \FT{\LP_{N} f} \big) \approx N$.
\end{lemma}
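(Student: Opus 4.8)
The statement to prove is the Bernstein inequality, Lemma~\ref{lem:bernstein}. Here is how I would proceed.

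\medskip

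The plan is to reduce to the case of a function whose Fourier transform is supported in a ball of a fixed radius, and then to exploit the reproducing-kernel (convolution) structure together with Young's inequality. First I would observe that if $\FT{f}$ is supported in a set of diameter $R$, then after a harmless modulation $f \mapsto e^{-2\pi i x\xi_0}f(x)$ (which changes no $L^p_x$ norm) we may assume $\spt\FT{f}\subset B_R$. Next, fix a Schwartz function $\chi\in C^\infty_c(\R^d)$ with $\FT\chi = 1$ on $B_1$; by scaling, $\FT{\chi(R^{-1}\cdot)\,R^d}(\xi) = \FT\chi(R^{-1}\xi)$ equals $1$ on $B_R$, so writing $\chi_R(x)\eqd R^d\chi(Rx)$ we get the reproducing identity $f = f * \chi_R$, valid because $\FT{f}\cdot\FT{\chi_R} = \FT{f}$ on all of $\R^d$.

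\medskip

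Then I would apply Young's convolution inequality: with $\tfrac1{r_2} = \tfrac1{r_1} + \tfrac1p - 1$, i.e. $\tfrac1p = 1 - \tfrac1{r_1} + \tfrac1{r_2} = 1 - \big(\tfrac1{r_1}-\tfrac1{r_2}\big)\in[0,1]$ (this is where $r_1\le r_2$ is used, to ensure $p\ge 1$), we obtain
\[
\|f\|_{L^{r_2}_x} = \|f*\chi_R\|_{L^{r_2}_x}\leq \|f\|_{L^{r_1}_x}\,\|\chi_R\|_{L^p_x}.
\]
A direct change of variables gives $\|\chi_R\|_{L^p_x} = R^{d(1-\frac1p)}\|\chi\|_{L^p_x} = R^{d(\frac1{r_1}-\frac1{r_2})}\|\chi\|_{L^p_x}$, and since $\chi$ is fixed and Schwartz, $\|\chi\|_{L^p_x}$ is bounded uniformly in $p\in[1,\infty]$ by a constant depending only on $d$. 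This yields $\|f\|_{L^{r_2}_x}\lesssim R^{d(\frac1{r_1}-\frac1{r_2})}\|f\|_{L^{r_1}_x}$ with $R = \diam(\spt\FT f)$, which is the claim. The two concluding remarks are then immediate: $\diam\big(\spt\Fourier(\QP_k f)\big)\approx 1$ since $\psi$ is supported in the unit ball, giving \eqref{eq:unit-scale-bernstein}; and $\diam\big(\spt\FT{\LP_N f}\big)\approx N$ from the support of $\phi_N$ described around \eqref{eq:LP-bump}.

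\medskip

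The only genuinely delicate point — and the one I would be careful to phrase correctly — is the uniformity of the implied constant: one must choose the mollifier $\chi$ once and for all (depending only on $d$), and check that $\sup_{1\le p\le\infty}\|\chi\|_{L^p}<\infty$, which holds because $\chi\in\mathcal{S}(\R^d)$. A minor technical nuisance is justifying $f = f*\chi_R$ when $f\in L^{r_1}$ is merely a tempered distribution with compactly supported Fourier transform rather than, say, $L^1$; this follows since such $f$ is automatically a smooth slowly-growing function (Paley–Wiener), so the convolution and the Fourier identity make classical sense. Everything else is a routine application of Young's inequality.
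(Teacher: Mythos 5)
The paper does not prove Lemma~\ref{lem:bernstein}: it is stated and used as the classical Bernstein inequality, so there is no proof to compare against. Your argument---modulate so that $\spt\FT f\subset B_R$, invoke the reproducing identity $f=f*\chi_R$, apply Young's inequality at the exponent $\tfrac1p = 1-(\tfrac1{r_1}-\tfrac1{r_2})$, and compute $\|\chi_R\|_{L^p}=R^{d(1-1/p)}\|\chi\|_{L^p}$ by scaling---is the standard proof, and the structure is correct.

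There is one slip you should fix: you require ``$\chi\in C^\infty_c(\R^d)$ with $\FT\chi\equiv 1$ on $B_1$,'' but no such function exists. If $\chi$ has compact support, Paley--Wiener makes $\FT\chi$ real-analytic, and an analytic function equal to $1$ on an open ball is $\equiv 1$ everywhere, forcing $\chi=\delta_0\notin C^\infty_c$. What you need is the mirror choice: take $\FT\chi\in C^\infty_c(\R^d)$ with $\FT\chi\equiv 1$ on $B_1$ and $\spt\FT\chi\subset B_2$, so that $\chi=\Fourier^{-1}\FT\chi\in\mathcal{S}(\R^d)$ (Schwartz, rapidly decaying, but \emph{not} compactly supported). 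Your later appeals to ``$\chi$ is fixed and Schwartz'' and to $\sup_{1\le p\le\infty}\|\chi\|_{L^p}<\infty$ show this is what you intended; the $C^\infty_c$ label is just wrong. A second, purely typographical point: the intermediate expression ``$\FT{\chi(R^{-1}\cdot)R^d}(\xi)=\FT\chi(R^{-1}\xi)$'' has the dilation direction reversed; it should be $\chi(R\cdot)R^d$, which agrees with your actual definition $\chi_R(x)=R^d\chi(Rx)$, and the computation of $\|\chi_R\|_{L^p}$ that follows is correct. Everything else, including the careful remark about the range of $p$ and the Paley--Wiener justification of $f=f*\chi_R$ for $f\in L^{r_1}$ with compactly supported $\FT f$, is fine.
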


Next, we recall the Strichartz estimates for general self-adjoint operator
$\Lap$ of order $s$ with constant coefficients. We say that a pair $(p, q)$ is
$\Lap$-admissible if
\begin{equation} \label{eq:defn:admissible}
  \frac{\sigma}{p} + \frac{d}{q} = \frac{d}{2} \qquad   \text{and} \quad q \in \big[ 2, \frac{2 d}{d - \sigma} \big) .
\end{equation}

\begin{lemma}[{\cite{choRemarksDispersiveEstimates2011,dinhWellposednessRegularityIllposedness2018,ginibreSmoothingPropertiesRetarded1992,keelEndpointStrichartzEstimates1998}}] \label{lem:strichartz}
Fix $d > \sigma \geq 2$ and let $\Lap$ satisfy \eqref{eq:symbol-bddness} and
\eqref{eq:symbol-curvature}. Then, there exists $T_0 > 0$ such that
for any open interval $I \subset \R$ with $|I| \leq T_0$, we have
\begin{equation}\label{eq:strichartze1}
\big\| e^{- i t \Lap} f \big\|_{L_t^p L_x^q (I \times \R^d)} \lesssim \|f\|_{L^2 (\R^d)} . 
\end{equation}
\end{lemma}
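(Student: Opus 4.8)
The plan is to reduce \eqref{eq:strichartze1} to a frequency-localised dispersive estimate and then run the classical $TT^{*}$ argument. Write $f=\sum_{N\in 2^{\N}}\LP_{N}f$ and fix a threshold $N_{0}=O(1)$ large enough that every dyadic shell $\{N/2\leq|\xi|\leq 2N\}$ with $N\geq N_{0}$ lies in the range where \eqref{eq:symbol-bddness} and \eqref{eq:symbol-curvature} are valid. The low-frequency part is elementary: since $I$ is bounded, Hölder in time, the $L^{2}_{x}$-unitarity of $e^{-it\Lap}$, and Bernstein's inequality (\Cref{lem:bernstein}, applied to a ball of radius $\approx N_{0}$) give
\[
\big\|e^{-it\Lap}\LP_{\leq N_{0}}f\big\|_{L^{p}_{t}L^{q}_{x}(I\times\R^{d})}
\lesssim |I|^{1/p}\sup_{t\in I}\big\|e^{-it\Lap}\LP_{\leq N_{0}}f\big\|_{L^{q}_{x}}
\lesssim_{T_{0}}\|f\|_{L^{2}_{x}}\,.
\]
It therefore suffices to treat each piece $\LP_{N}f$ with $N\geq N_{0}$, and the heart of the matter is the dispersive estimate
\[
\big\|e^{-it\Lap}\LP_{N}g\big\|_{L^{\infty}_{x}}\lesssim |t|^{-d/\sigma}\|g\|_{L^{1}_{x}}\,,\qquad t\neq0,
\]
with constant independent of $N$.

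For this I would write the kernel of $e^{-it\Lap}\LP_{N}$ as $K^{N}_{t}(x)=\int_{\R^{d}}e^{2\pi i x\cdot\xi}\,e^{-it\Lap(2\pi i\xi)}\,\phi_{N}(|\xi|)\dd\xi$, rescale $\xi=N\eta$, and factor $N^{\sigma}$ out of the symbol:
\[
K^{N}_{t}(x)=N^{d}\int_{\R^{d}}e^{i\Phi(\eta)}a(\eta)\dd\eta\,,\qquad
\Phi(\eta)=2\pi N x\cdot\eta-tN^{\sigma}\Psi_{N}(\eta)\,,\qquad
\Psi_{N}(\eta)\eqd N^{-\sigma}\Lap(2\pi i N\eta)\,,
\]
where $a$ is a fixed amplitude supported in the annulus $\{1/2\leq|\eta|\leq2\}$. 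The point is that \eqref{eq:symbol-bddness} and \eqref{eq:symbol-curvature} are scale-invariant in precisely the right way: on that annulus one has $|\partial^{\alpha}\Psi_{N}|\lesssim1$ for $|\alpha|\leq\lfloor d/\sigma\rfloor+2$, $|\nabla\Psi_{N}|\gtrsim1$, and $|\det\Hess\Psi_{N}|\approx1$, all uniformly in $N$. Hence $\Psi_{N}$ has only finitely many critical points in the annulus, their number bounded uniformly in $N$, with $|\nabla\Psi_{N}(\eta)|\gtrsim\operatorname{dist}(\eta,\operatorname{Crit}\Psi_{N})$; correspondingly $\Phi$ has at most boundedly many critical points and $|\nabla\Phi(\eta)|\gtrsim\mu\operatorname{dist}(\eta,\operatorname{Crit}\Phi)$ with $\mu\eqd N^{\sigma}|t|$. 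When $\mu\leq1$ the trivial bound $|K^{N}_{t}|\lesssim N^{d}\lesssim|t|^{-d/\sigma}$ already suffices. When $\mu\geq1$ it is enough to establish the suboptimal decay $\big|\int e^{i\Phi}a\big|\lesssim\mu^{-d/\sigma}$, since then $|K^{N}_{t}|\lesssim N^{d}(N^{\sigma}|t|)^{-d/\sigma}=|t|^{-d/\sigma}$; and $\mu^{-d/\sigma}$ is obtained by isolating a ball of radius $\approx\mu^{-1/\sigma}$ around the critical points of $\Phi$ (contributing $\lesssim\mu^{-d/\sigma}$ trivially) and performing $\lfloor d/\sigma\rfloor+1$ integrations by parts on the complement, using $|\nabla\Phi|\gtrsim\mu\operatorname{dist}(\eta,\operatorname{Crit}\Phi)$ and $|\partial^{\beta}\Phi|\lesssim\mu$ for $2\leq|\beta|\leq\lfloor d/\sigma\rfloor+2$. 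The derivative bound $|\alpha|\leq\lfloor d/\sigma\rfloor+2$ in \eqref{eq:symbol-bddness} is exactly what these integrations by parts consume, which also accounts for the exponent $d/\sigma$ in the statement. (Up to bookkeeping, these are the dispersive estimates of the works cited with the lemma.)

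Finally, with the dispersive bound and the energy identity $\|e^{-it\Lap}g\|_{L^{2}_{x}}=\|g\|_{L^{2}_{x}}$ at hand, the standard $TT^{*}$ duality argument, applied frequency-by-frequency, closes the estimate. Interpolating the two bounds gives $\|e^{-i(t-s)\Lap}\LP_{N}h\|_{L^{q}_{x}}\lesssim|t-s|^{-\frac{d}{\sigma}(1-\frac{2}{q})}\|h\|_{L^{q'}_{x}}$, and the admissibility relation \eqref{eq:defn:admissible} forces $\frac{d}{\sigma}\big(1-\frac{2}{q}\big)=\frac{2}{p}$ with $p>2$ — equivalently $q<\frac{2d}{d-\sigma}$, i.e. strictly away from the Keel--Tao endpoint $(p,q)=\big(2,\frac{2d}{d-\sigma}\big)$ — so the Hardy--Littlewood--Sobolev inequality in the time variable applies on $I$ and yields $\|e^{-it\Lap}\LP_{N}f\|_{L^{p}_{t}L^{q}_{x}}\lesssim\|\LP_{N}f\|_{L^{2}_{x}}$ with an $N$-independent constant. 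Summing in $N$ via the Littlewood--Paley square function (legitimate since $p,q\geq2$; the case $q=2$, $p=\infty$ being the trivial energy estimate) and combining with the low-frequency bound yields \eqref{eq:strichartze1}. I expect the main obstacle to be the uniform-in-$N$ oscillatory integral estimate: checking that the rescaled symbols $\Psi_{N}$ form an equicontinuous family with non-degenerate Hessians, and that the $\mu^{-d/\sigma}$ decay can indeed be extracted using only $\lfloor d/\sigma\rfloor+2$ derivatives; everything after that is routine bookkeeping.
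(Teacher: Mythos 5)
The paper gives no proof of \Cref{lem:strichartz}: the statement is attributed to the cited references, and the only place the paper touches the underlying estimate is in the proof of \Cref{lem:dir-maximal}, where the authors directly invoke the frequency-localised dispersive bound $|K_N(t,x)|\lesssim |t|^{-d/\sigma}$ from \cite[Lemma 2.1]{huangInhomogeneousOscillatoryIntegrals2017} (see \eqref{eq:efou}). So there is no in-paper argument to compare yours against; what can be said is whether your sketch is a faithful reconstruction of the cited route, and it is. The decomposition into a harmless low-frequency piece (handled with Hölder in $t$, $L^2$-unitarity, and Bernstein on $\{|\xi|\lesssim N_0\}$ — needed because \eqref{eq:symbol-bddness}--\eqref{eq:symbol-curvature} are only assumed for large $\xi$), the rescaling $\xi=N\eta$ to obtain a uniform-in-$N$ family of phases with $|\partial^\alpha\Psi_N|\lesssim1$, $|\nabla\Psi_N|\gtrsim1$, $|\det\Hess\Psi_N|\approx1$ on the unit annulus, the dispersive bound $\|e^{-it\Lap}\LP_N\|_{L^1\to L^\infty}\lesssim|t|^{-d/\sigma}$, and then non-endpoint $TT^*$ with Hardy--Littlewood--Sobolev in time plus Littlewood--Paley square-function summation is exactly the standard derivation. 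You are also right that the admissibility restriction $q<\tfrac{2d}{d-\sigma}$ in \eqref{eq:defn:admissible} forces $p>2$, so no Keel--Tao endpoint argument is needed.

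One remark on the only genuinely nontrivial step, which you flag yourself. Your bookkeeping for the uniform oscillatory-integral bound does close: after $k=\lfloor d/\sigma\rfloor+1$ integrations by parts on dyadic annuli $\mathrm{dist}(\eta,\mathrm{Crit}\,\Phi)\approx\rho\geq\mu^{-1/\sigma}$, the worst term is $\mu^{-k}\rho^{-2k}$, and integrating $\rho^{d-1}\,\mathrm{d}\rho$ over $[\mu^{-1/\sigma},1]$ gives either $\mu^{-k}$ (if $2k\leq d$) or $\mu^{(2k-d)/\sigma-k}$ (if $2k>d$), both $\lesssim\mu^{-d/\sigma}$ once $\sigma\geq2$ and $k\geq d/\sigma$ — which is precisely what $\lfloor d/\sigma\rfloor+2$ derivative bounds in \eqref{eq:symbol-bddness} buy you. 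However, the inequality $|\nabla\Phi(\eta)|\gtrsim\mu\,\mathrm{dist}(\eta,\mathrm{Crit}\,\Phi)$ needs a word about where the critical points actually sit: they are determined by $\nabla\Psi_N(\eta^*)=\tfrac{2\pi N x}{tN^\sigma}$ and may lie outside or far from $\mathrm{spt}\,a$, or not exist at all, in which case one instead uses $|\nabla\Phi|\gtrsim\mu$ directly. This is standard but worth stating, since the uniform Hessian hypothesis only holds on the annulus. As the authors treat the whole lemma as a black box from the literature, this level of detail is not expected, and your sketch correctly identifies both the architecture and the crux.
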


\begin{remark}
  Since the evolution operator $e^{- i t \Lap}$ commutes with projections
  $\LP_{N}$ for any $N \in 2^{\Z}$ and with $\QP_{n}$ for any $n \in \Z^d$ (all 
  are Fourier multipliers), \eqref{eq:strichartze1}
  holds with $f$ replaced by $\LP_{N} f$ or by $\QP_{n} f$ on both
  sides of the inequality.
\end{remark}

In the rest of the paper, we assume that all time intervals have length less
than $T_0$ given in \Cref{lem:strichartz}, and therefore the Strichartz
estimates hold. Since we are only interested in the local existence, this
assumption does not influence our main results.

To introduce the directional norms, decompose $x \in \R^d$ for any $l \in \{1, \ldots, d\}$ as
\[ x = x_l e_l + \sum_{i = 1, i \neq l}^d x_i e_i = : x_l e_l + x'_l \]
and, if there is no possible confusion, we write $x' \assign x_l'$. Fix $I
\subset \R$ and $l \in \{1, \ldots, d\}$, and for $1 \leq a, b < \infty$
define
\[
\|h\|_{L_{e_l}^{a, b} (I \times \R^d)} = \Big( \int_{\R} \Big( \int_I \int_{\R^{d - 1}} |h (t, x_l e_l + x_l') |^b \dd x_l' \dd t \Big)^{\frac{a}{b}} \dd x_l \Big)^{\frac{1}{a}} \,,
\]
where $h : I \times \R^d \to \mathbb{C}$ is such that the right-hand side is
finite. When $a = \infty$ or $b = \infty$, we use the standard
modifications by the supremum norm. Next, we establish a maximal and a local
smoothing estimates for the directional norms.

\begin{lemma}\label{lem:dir-maximal}
Fix $d > \sigma \geq 2$ and $\Lap$ satisfying \eqref{eq:symbol-bddness} and
\eqref{eq:symbol-curvature}. There exists $T_0 > 0$ such that for any
open interval $I \subset \R$ with $|I| \leq T_0$, any
$l \in \{ 1, \ldots, d \}$, $N \in 2^{\N}$, and $f \in L^2 (\R^d)$ we have
\begin{equation}\label{eq:dir-maximal}
N^{- \frac{d - 1}{2}} \big\| e^{- i t \Lap} \LP_{N} f \big\|_{L_{e_l}^{2, \infty} (I \times \R^d)} \lesssim \|\LP_{N} f\|_{L_x^2 (\R^d)} \,.
\end{equation}
\end{lemma}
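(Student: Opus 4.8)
The estimate \eqref{eq:dir-maximal} is a fixed-time-localized, directional analogue of the $L^4_{t,x}$ Strichartz estimate, and the natural route is to prove it by a $TT^*$ argument reducing it to a dispersive decay bound for a one-dimensional oscillatory integral. First I would dualize: by duality and the fact that $e^{-it\Lap}$ is unitary on $L^2_x$, the estimate \eqref{eq:dir-maximal} is equivalent to the bound
\[
\Big\| \int_I e^{i t \Lap} \LP_N^2 g(t, \cdot)\,\dd t \Big\|_{L^2_x(\R^d)} \lesssim N^{\frac{d-1}{2}} \|g\|_{L^{2,1}_{e_l}(I\times\R^d)},
\]
or, after squaring and expanding, to the kernel estimate for the operator $T_N g(t,x) = e^{-it\Lap}\LP_N^2 \big(\int_I e^{it'\Lap} g(t',\cdot)\,\dd t'\big)$. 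Concretely, $T_N$ has Schwartz kernel
\[
K_N(t,x;t',y) = \int_{\R^d} e^{2\pi i (x-y)\xi} e^{-i(t-t')\Lap(2\pi\xi)} \phi_N(|\xi|)^2 \,\dd\xi,
\]
and the goal becomes: after pairing against two functions in the predual $L^{2,1}_{e_l}$, the relevant operator norm is controlled by $N^{d-1}$. The key point is that the $L^{2,1}_{e_l}$ norm only asks for integrability in the single spatial coordinate $x_l$ (with $L^\infty$ in $x'_l$ and $L^2$ in $x_l$ outside), so one only needs a \emph{one-dimensional} dispersive estimate in the $x_l$-direction, with the transverse variables handled by crude volume/support bounds of size $N^{d-1}$ coming from the frequency localization $|\xi|\approx N$.

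The heart of the matter is thus a stationary-phase analysis of the one-dimensional integral obtained by freezing the transverse frequencies: writing $\xi = (\xi_l, \xi')$, one studies
\[
I_N(\tau, h; \xi') = \int_{\R} e^{2\pi i h \xi_l} e^{-i\tau \Lap(2\pi(\xi_l e_l + \xi'))} \widetilde{\phi}_N(\xi_l)\,\dd\xi_l,
\]
where $h = x_l - y_l$ and $\tau = t - t'$, and one wants a decay estimate in the variable $h$ (for fixed $\tau, \xi'$) that, after integrating in $x_l$, yields the claimed power of $N$. The nonvanishing-gradient hypothesis in \eqref{eq:symbol-bddness}, namely $|\nabla\Lap(\xi)|\gtrsim |\xi|^{\sigma-1}$, guarantees that on a substantial portion of the annulus $\partial_{\xi_l}\Lap$ is nondegenerate, giving non-stationary phase and hence rapid decay there; on the remaining portion one has a genuine stationary point, and the relevant curvature is controlled by the second derivative $\partial^2_{\xi_l}\Lap$, which one bounds using \eqref{eq:symbol-bddness} and \eqref{eq:symbol-curvature}. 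The scaling $\Lap(2\pi\xi)\approx |\xi|^\sigma$ on $|\xi|\approx N$ means $\partial^2_{\xi_l}\Lap \approx N^{\sigma-2}$ and $\partial_{\xi_l}\Lap\approx N^{\sigma-1}$, so van der Corput (first- and second-derivative tests) on the $\xi_l$-integral gives decay in $h$ whose $L^1_{x_l}$-norm, combined with the transverse factor $N^{d-1}$, produces exactly the bound $N^{d-1}$ for the operator — equivalently $N^{\frac{d-1}{2}}$ for $e^{-it\Lap}\LP_N f$ in $L^{2,\infty}_{e_l}$ after taking square roots. A standard partition of the annulus $\{|\xi|\approx N\}$ into the region where $|\partial_{\xi_l}\Lap|$ is large (non-stationary) and the region near its zero set (stationary, controlled by curvature) organizes this; a Schur-type test then assembles the one-dimensional kernel bound into the operator bound on $L^{2,1}_{e_l}$.

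The main obstacle is the lower bound on $\partial^2_{\xi_l}\Lap$ near the stationary set: hypothesis \eqref{eq:symbol-curvature} controls only the \emph{determinant} of the full Hessian, not the individual diagonal entry $\partial^2_{\xi_l}\Lap$ in the chosen coordinate direction $e_l$, and a priori the Hessian could be degenerate in the $e_l$-direction while nondegenerate overall. The fix is that the estimate \eqref{eq:dir-maximal} must hold for \emph{every} $l$, so one is free to — and in fact one should think of it as — also allowing a preliminary decomposition of the frequency annulus into finitely many pieces (of comparable size $N$) on each of which, after possibly using that $|\nabla\Lap|\gtrsim N^{\sigma-1}$ picks out a direction in which the phase is non-stationary, the $\xi_l$-integral is either non-stationary or has one-dimensional curvature $\gtrsim N^{\sigma-2}$ coming from $|\det \Hess\Lap|\gtrsim N^{d(\sigma-2)}$ together with the upper bounds $|\partial^\alpha\Lap|\lesssim N^{\sigma-|\alpha|}$ (which cap the other Hessian entries and hence force a lower bound on some diagonal direction). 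Making this micro-local bookkeeping uniform in $N$ and in $l$, while keeping the number of pieces bounded, is the technically delicate part; once it is done, the oscillatory integral estimates and the Schur test are routine. Since $|I|\le T_0$ is small, the time integration contributes only a harmless bounded factor, so no global-in-time dispersive decay is needed.
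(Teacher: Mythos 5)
Your $TT^\ast$ setup, the reduction to the kernel
\[
K_N(t,x)=\int_{\R^d} e^{2\pi i x\cdot\xi - i t\Lap(\xi)}\chi_N^2(\xi)\,\dd\xi
\]
and the target bound $\|K_N\|_{L^{1,\infty}_{e_l}}\lesssim N^{d-1}$ coincide with the paper, and you have correctly put your finger on the main obstacle: condition \eqref{eq:symbol-curvature} controls only $\det\Hess\Lap$ and not the single diagonal entry $\partial^2_{\xi_l}\Lap$. However, the fix you propose does not work, and this is a genuine gap. You suggest cutting the annulus $\{|\xi|\approx N\}$ into finitely many pieces on each of which the $\xi_l$-phase is either non-stationary or has one-dimensional curvature $|\partial^2_{\xi_l}\Lap|\gtrsim N^{\sigma-2}$, claiming this follows from $|\det\Hess\Lap|\gtrsim N^{d(\sigma-2)}$ together with the upper bounds on the entries. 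That implication is false: a large determinant does not force any prescribed diagonal entry to be bounded below. Concretely, for $d=3$, $\sigma=2$, the symbol $\Lap(\xi)=\xi_1\xi_2+\xi_3^2$ satisfies \eqref{eq:symbol-bddness}--\eqref{eq:symbol-curvature} (one has $\nabla\Lap=(\xi_2,\xi_1,2\xi_3)$ and $\det\Hess\Lap=-2$), yet $\partial^2_{\xi_1}\Lap\equiv 0$ on the whole annulus, so for $l=1$ no finite frequency decomposition can produce one-dimensional curvature. Since $l$ is fixed in the statement (the norm $L^{2,\infty}_{e_l}$ singles out a particular coordinate), you are not free to rotate the direction either; and non-stationarity of the $\xi_l$-phase is governed by $|x_l - t\,\partial_{\xi_l}\Lap|$, a quantity that depends on $(t,x_l)$ and cannot be arranged by a cut in frequency space alone.

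The paper avoids a one-dimensional stationary-phase analysis altogether. It splits the physical variables, not the frequency annulus, into the two regimes $|x_l|\lesssim N^{\sigma-1}|t|$ and $|x_l|\gtrsim N^{\sigma-1}|t|$. In the second regime the phase is automatically non-stationary in $\xi_l$ because $|t\,\partial_{\xi_l}\Lap|\lesssim N^{\sigma-1}|t|\ll |x_l|$ on the support, and two integrations by parts in $\xi_l$ give $|K_N|\lesssim N^d\langle Nx_l\rangle^{-2}$, which integrates in $x_l$ to $N^{d-1}$. In the first regime there is no attempt to extract one-dimensional curvature: instead the paper invokes the $d$-dimensional dispersive estimate $|K_N(t,x)|\lesssim |t|^{-d/\sigma}$ from \cite[Lemma~2.1]{huangInhomogeneousOscillatoryIntegrals2017}, which is exactly where the full Hessian-determinant hypothesis \eqref{eq:symbol-curvature} enters, and combines it with the trivial bound $|K_N|\lesssim N^d$ to get $|K_N|\lesssim N^d\langle Nx_l\rangle^{-d/\sigma}$, again integrable since $d>\sigma$. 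This is the idea your proposal is missing: use the Hessian condition in its natural $d$-dimensional form via a dispersive decay in $t$, rather than trying to force a pointwise one-dimensional curvature bound that the hypotheses do not supply.
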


\begin{lemma}\label{lem:dir-local-smoothing}
Fix $d > \sigma \geq 2$ and $\Lap$ satisfying \eqref{eq:symbol-bddness} and
\eqref{eq:symbol-curvature}. There exists $T_0 > 0$ such that for any
open interval $I \subset \R$ with $|I| \leq T_0$, any
$l \in \{ 1, \ldots, d \}$, $N \in 2^{\N}$, and $f \in L^2 (\R^d)$ we have
\begin{equation}\label{eq:dir-local-smoothing}
N^{\frac{\sigma - 1}{2}} \big\| e^{- i t \Lap} \LP_{N} \UP_{e_l}^{\Lap} f \big\|_{L_{e_l}^{\infty, 2} (I \times \R^d)} \lesssim \|\LP_{N} f\|_{L^2 (\R^d)} \,, 
\end{equation}
where $\UP_{e_l}^{\Lap}$ is a frequency projection operators given by
$\FT{\UP_{e_l}^{\Lap} f} (\xi) \eqd \1_{\mf{U}_{e_l}} (\xi) \FT{f} (\xi)$ with
\begin{equation}\label{eq:microlocal-projections}
\mf{U}_{e_l} \eqd\begin{ltae}
\Big\{ \xi \in \R^d \st | \nabla \Lap (\xi) \cdot e_l | > \frac{| \nabla \Lap   (\xi) |}{2 \sqrt{d}} \Big\}
\\
\quad\setminus \bigcup_{l' = 1}^{l - 1} \Big\{ \xi \in \R^d \st | \nabla \Lap (\xi) \cdot e_{l'} | > \frac{| \nabla \Lap (\xi) |}{2 \sqrt{d}} \Big\} .
\end{ltae}
\end{equation}
\end{lemma}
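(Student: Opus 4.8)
The plan is to adapt the classical local‑smoothing mechanism for dispersive equations to an operator of order $\sigma$ and to the directional norm $L_{e_l}^{\infty,2}$: a change of variables in the frequency variable $\xi_l$ converts the smoothing gain into a Jacobian factor, and Plancherel's theorem in $t$ makes the resulting estimate uniform over the hyperplanes $\{x_l=s\}$. Put $g\eqd\LP_{N}\UP_{e_l}^{\Lap}f$, so that $\FT{g}(\xi)=\phi_N(|\xi|)\1_{\mf{U}_{e_l}}(\xi)\FT{f}(\xi)$ is supported in $\Omega\eqd\spt\phi_N(|\cdot|)\cap\mf{U}_{e_l}$ and $\|g\|_{L^2}\le\|\LP_{N}f\|_{L^2}$, because $\UP_{e_l}^{\Lap}$ is a Fourier projection of norm at most $1$ commuting with $\LP_{N}$. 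By \eqref{eq:microlocal-projections}, on $\Omega$ one has $|\partial_{\xi_l}\Lap(\xi)|=|\nabla\Lap(\xi)\cdot e_l|>\frac{1}{2\sqrt{d}}|\nabla\Lap(\xi)|$, so \eqref{eq:symbol-bddness} together with $|\xi|\approx N$ on $\spt\phi_N(|\cdot|)$ yields
\[
|\partial_{\xi_l}\Lap(\xi)|\gtrsim N^{\sigma-1}\qquad\text{and}\qquad|\partial_{\xi_l}^2\Lap(\xi)|\lesssim N^{\sigma-2}\qquad\text{on }\Omega .
\]
(The finitely many dyadic $N$ for which \eqref{eq:symbol-bddness} may fail are handled directly via Sobolev embedding in $x_l$, Bernstein, and $\|e^{-it\Lap}g\|_{L^2}=\|g\|_{L^2}$; so assume $N$ large.) It suffices to prove the stronger, time‑global bound
\[
\sup_{s\in\R}\ \big\|\,(e^{-it\Lap}g)(t,\,s e_l+x')\,\big\|_{L^2_{t,x'}(\R\times\R^{d-1})}\ \lesssim\ N^{-\frac{\sigma-1}{2}}\,\|g\|_{L^2},
\]
since restricting the $t$‑integral to $I$ produces $\|e^{-it\Lap}g\|_{L_{e_l}^{\infty,2}(I\times\R^d)}$ on the left and multiplying by $N^{\frac{\sigma-1}{2}}$ gives \eqref{eq:dir-local-smoothing}. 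Only $\sigma-1>0$ is used; the curvature hypothesis \eqref{eq:symbol-curvature} plays no role here.

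Fix $s=x_l$, write $\xi=(\xi_l,\xi')$ and $\Lap(\xi)=\Lap(\xi_l,\xi')$. Plancherel in $x'$ gives $\big\|(e^{-it\Lap}g)(t,s e_l+\cdot)\big\|_{L^2_{x'}}^2=\int_{\R^{d-1}}|V_s(t,\xi')|^2\,\dd\xi'$, where
\[
V_s(t,\xi')\eqd\int_{\R}e^{2\pi i s\xi_l}\,e^{-it\Lap(\xi_l,\xi')}\,\FT{g}(\xi_l,\xi')\,\dd\xi_l .
\]
For fixed $\xi'$, the map $\Phi_{\xi'}\colon\xi_l\mapsto\Lap(\xi_l,\xi')$ has nonvanishing derivative $\partial_{\xi_l}\Lap$ on $\Omega_{\xi'}\eqd\{\xi_l:(\xi_l,\xi')\in\Omega\}$, hence is a local diffeomorphism there; let $K$ denote the supremum, over all $N$ and $\xi'$, of its multiplicity (number of preimages of a point). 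Changing variables $\tau=\Phi_{\xi'}(\xi_l)$, $V_s(t,\xi')$ becomes the $t$‑Fourier transform of
\[
\tau\ \longmapsto\ \sum_{\substack{\xi_l\in\Omega_{\xi'}\\ \Lap(\xi_l,\xi')=\tau}}\frac{e^{2\pi i s\xi_l}\,\FT{g}(\xi_l,\xi')}{|\partial_{\xi_l}\Lap(\xi_l,\xi')|}\,.
\]
Applying Plancherel in $t$, discarding the unimodular factors $e^{2\pi i s\xi_l}$ (which is exactly what makes the bound uniform in $s$), estimating the sum of at most $K$ terms by Cauchy–Schwarz, and changing variables back to $\xi_l$, we obtain
\[
\int_{\R}|V_s(t,\xi')|^2\,\dd t\ \lesssim\ K\int_{\Omega_{\xi'}}\frac{|\FT{g}(\xi_l,\xi')|^2}{|\partial_{\xi_l}\Lap(\xi_l,\xi')|}\,\dd\xi_l\ \lesssim\ K\,N^{-(\sigma-1)}\int_{\R}|\FT{g}(\xi_l,\xi')|^2\,\dd\xi_l .
\]
Integrating in $\xi'$ and taking the supremum over $s$ gives the time‑global bound above with constant $\lesssim K$. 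All of this is first carried out for $\FT{g}\in C_c^\infty(\Omega)$ and then extended by density.

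It remains to bound the multiplicity $K$ uniformly in $N$ and $\xi'$, which is the main obstacle and the reason for introducing the micro‑local sectors $\mf{U}_{e_l}$. On $\Omega$ we have $|\partial_{\xi_l}\Lap|\gtrsim N^{\sigma-1}$, while $\Omega_{\xi'}$ is contained in an interval of length $\lesssim N$ and $|\partial_{\xi_l}^2\Lap|\lesssim N^{\sigma-2}$ on $\Omega$. Hence, moving along the $e_l$‑axis, between any two points of $\Omega_{\xi'}$ lying over the same level $\Lap(\cdot,\xi')=\tau_0$ the function $\Lap(\cdot,\xi')$ must have an interior critical point, so $\partial_{\xi_l}\Lap$ descends from magnitude $\gtrsim N^{\sigma-1}$ to $0$; since $|\partial_{\xi_l}^2\Lap|\lesssim N^{\sigma-2}$, this consumes $\xi_l$‑length $\gtrsim N^{\sigma-1}/N^{\sigma-2}=N$. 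As only a total $\xi_l$‑length $\lesssim N$ is available, at most $O(1)$ points of $\Omega_{\xi'}$ can lie over a given $\tau_0$, so $K=O(1)$ uniformly in $N$, $\xi'$ (and $l$). Inserting $K\lesssim 1$ into the previous display completes the proof.
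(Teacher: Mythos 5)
Your proof is correct and follows essentially the same route as the paper: Plancherel in $x'$ reduces matters to a one–dimensional estimate for fixed $\xi'$, a change of variables $\xi_l \mapsto \Lap(\xi_l,\xi')$ followed by Plancherel in $t$ produces the Jacobian $|\partial_{\xi_l}\Lap|^{-1} \lesssim N^{-(\sigma-1)}$ on $\mf{U}_{e_l}$, and a mean-value/Rolle argument (derivative drops from $\gtrsim N^{\sigma-1}$ to $0$ over $\xi_l$-length $\gtrsim N$ because $|\partial_{\xi_l}^2\Lap|\lesssim N^{\sigma-2}$) caps the relevant count at $O(1)$. The only cosmetic difference is that the paper decomposes $\{\partial_{\xi_1}\Lap(\cdot,\xi')\neq 0\}$ into monotonicity intervals and shows $O(1)$ of them meet $\mf{U}_{e_1}$, whereas you perform the change of variables globally and bound the multiplicity $K$ of the map directly; the two bookkeepings are equivalent.
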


\begin{remark}
Notice that the scaling power $N^{- \frac{d - 1}{2}}$ in
\eqref{eq:dir-maximal} does not depend on $\Lap$ but only on the dimension
$d$, whereas $N^{\frac{\sigma - 1}{2}}$ in \eqref{eq:dir-local-smoothing}
only depends on the order of $\Lap$ but not on the dimension $d$.
\end{remark}

\begin{proof*}{Proof of \Cref{lem:dir-maximal}}
We use a $T T^{\ast}$ argument for the operator $T$ given by
\begin{equation}\label{eq:T-operator-max}
T f (t, x) \eqd \int_{\R^d} e^{2 \pi i \xi x - i t \Lap (\xi)} \chi_N (\xi) \FT{f} (\xi) d \xi, 
\end{equation}
where
\[
\chi_N (\xi) \eqd \sum_{\substack{ N' \in \N\\ | N' - N | \leq 2 }} \phi_{N'} (| \xi |)
\]
so that $\chi_N (\xi) \phi_N (| \xi |) = \phi_N (| \xi |)$. Bound
\eqref{eq:dir-maximal} can be rewritten as
\[
\|T f\|_{L_{e_l}^{2, \infty} (I \times \R^d)} \lesssim N^{\frac{d -1}{2}} \|f\|_{L^2_x (\R^d)},
\]
or equivalently by duality $\|T^{\ast} g\|_{L^2_x (\R^d)}^2 \lesssim N^{d - 1} \|g\|_{L_{e_l}^{2, 1} (I \times \R^d)}^2$. We claim that it suffices to show that
\begin{equation}\label{eq:TTstar-goal}
\|T T^{\ast} g\|_{L_{e_l}^{2, \infty} (I \times \R^d)} \lesssim N^{d - 1}  \|g\|_{L_{e_l}^{2, 1} (I \times \R^d)} . 
\end{equation}
Indeed, if \eqref{eq:TTstar-goal} holds, then
\[
\|T^{\ast} g\|_{L^2_x (\R^d)}^2 = \langle T^{\ast} g, T^{\ast} g \rangle
= \langle T T^{\ast} g, g \rangle \lesssim \|T T^{\ast} g\|_{L^{2, \infty}_{e_l}} \|g\|_{L_{e_l}^{2, 1}}
\lesssim N^{d - 1} \|g\|_{L_{e_l}^{2, 1} (I \times \R^d)}^2
\]
as desired. Direct computations show that
\begin{equation}\label{eq:TTstar-kernel}
\begin{ltae}
T T^{\ast} g (t, x) = \int_{\R \times \R^d} K_N (t - s, x - y) g (s, y)
\dd s \dd y,
\\
K_N (t, x) \eqd \frac{1}{(2 \pi)^{2 d}} \int_{\R^d} e^{2 \pi i \xi \cdot
  x - i t \Lap (\xi)} \chi_N^2 (\xi) \dd \xi,
\end{ltae} 
\end{equation}
reducing our proof, by Young convolution inequality, to
\[
\| K_N \|_{L^{1, \infty}_{e_l} (\R \times \R^d)} \lesssim N^{d - 1} .
\]
For simplicity, we henceforth suppose $e_l = e_1$. Since $\chi_N$ is
supported on a set of measure of order $N^d$, by interchanging the
integral and absolute value, we have
\begin{equation}\label{eq:trve}
| K_N (t, x) | \lesssim N^d \,.
\end{equation}
In addition, for $x_1 \neq 0$, an integration by parts, and an oscillation
of the linear phase yield  a decay in $x_1$:
\begin{equation}\label{eq:wims}
| K_N (t, x) | \begin{ltae}
= \frac{1}{4 \pi^2} \Big| \int_{\R^d}   \frac{1}{x_1^2}
\partial^2_{\xi_1} \Big( e^{2 \pi i x_1 \xi_1} \Big)  e^{2 \pi i \xi' \cdot x' - i t \Lap (\xi)} \chi_N^2 (\xi) \dd \xi \Big|
\\
\lesssim \frac{N^d}{| x_1 |^2} \big\| e^{- i t \Lap (\xi)} \chi_N^2 (\xi) \big\|_{C^2} .
\end{ltae} 
\end{equation}
Fix $N_0 \in 2^N$ large enough, depending on $\Lap$, so that
\eqref{eq:symbol-bddness} and \eqref{eq:symbol-curvature} hold for any
$\xi \in \spt (\chi_N)$ and $N > N_0$.  Since $\chi_N$ and $\Lap$ are smooth and
$\chi_N$ is compactly supported, we obtain \eqref{eq:trve} from
\eqref{eq:wims}, and therefore \eqref{eq:dir-maximal} holds for all
$N \leq N_0$.
  
Thus, it remains to consider $N > N_0$ and in particular
\eqref{eq:symbol-bddness} and \eqref{eq:symbol-curvature} hold. Then,
by {\cite[Lemma 2.1]{huangInhomogeneousOscillatoryIntegrals2017}} we
obtain the bound
\begin{equation}\label{eq:efou}
K_N (t, x_1, x') \lesssim |t|^{- \frac{d}{\sigma}} \,.
\end{equation}
Hence, if $|x_1 | \lesssim \langle N\rangle^{\sigma - 1} |t|$, then \eqref{eq:trve} and
\eqref{eq:efou} imply
\[
| K_N (t, x_1, x') | \lesssim \min \Big( N^d, |t|^{- \frac{d}{\sigma}}\Big) \lesssim \min \Big(
N^d, N^{d - \frac{d}{\sigma}} |x_1 |^{- \frac{d}{\sigma}} \Big) \lesssim N^d \big\langle Nx_1 \big\rangle^{-
  \frac{d}{\sigma}}
\]
and since $d > \sigma$
\begin{equation}\label{eq:flfe}
\Big\| \1_{|x_1 | \lesssim \langle N \rangle^{\sigma - 1} |t|} K_N (t, x_1, x') \Big\|_{L^{1, \infty}_{e_l} (\R \times \R^d)} \leq \int_{\R} N^d \big\langle Nx_1 \big\rangle^{- \frac{d}{\sigma}} \dd x_1
\lesssim N^{d - 1}
\end{equation}
and \eqref{eq:dir-maximal} follows.
Finally, we restrict to $| x_1 | \gtrsim N^{\sigma - 1} |t|$, where 
we obtain a lower bound on the derivative of the
phase
\[
\partial_{\xi_1} \Big( 2 \pi (x_1 \xi_1 + x' \cdot \xi') - t \Lap (\xi) \Big) = 2 \pi x_1 - t \partial_{\xi_1}  \Lap (\xi) \,.
\]
Indeed, by \eqref{eq:symbol-bddness}, for $\xi \in \spt(\chi_N)$ we have  $|
\partial_{\xi_1} \Lap (\xi) | \lesssim | \xi |^{\sigma - 1} \lesssim N^{\sigma - 1}$. In particular, for $|x_1| \gtrsim N^{\sigma - 1}|t|$ one has 
\begin{equation}\label{eq:lbest}
\big|2 \pi x_1 - t \partial_{\xi_1} \Lap (\xi) \big| \gtrsim | x_1 | .
\end{equation}
To show a decay of $K_N$ in $x_1$, we exploit oscillations of the phase. By
integrating by parts twice we obtain
\[
K_N (t, x_1, x') \begin{ltae}
= - \int_{\mathclap{\R \times \R^{d - 1}}} \chi_N^2 (\xi) \Big( \frac{1}{2 \pi x_1 - t  \partial_{\xi_1} \Lap (\xi)} \partial_{\xi_1} \Big)^2 e^{2 \pi i x \cdot \xi  - i t \Lap (\xi)}  d \xi_1 d \xi'
\\
= - \int_{\mathclap{\R \times \R^{d - 1}}} \begin{ltae}
e^{2 \pi i x \cdot \xi  - i t \Lap (\xi)}
\\
\quad\times \partial_{\xi_1} \Big( \frac{1}{2 \pi x_1 - t  \partial_{\xi_1} \Lap (\xi)} \partial_{\xi_1} \Big( \frac{\chi_N^2  (\xi)}{2 \pi x_1 - t \partial_{\xi_1} \Lap (\xi)} \Big) \Big) d \xi_1 d \xi' .
\end{ltae}
\end{ltae} \]
By rescaling,
\[
| \partial_{\xi_1}^k \chi_N^2 (\xi) | \lesssim N^{- k} \1_{| \xi |  \approx N}
\]
while by \eqref{eq:symbol-bddness} and \eqref{eq:lbest}
\[
\Big| \partial_{\xi_1} \big( \frac{1}{2 \pi x_1 - t \partial_{\xi_1}\Lap (\xi)} \big) \Big|
\begin{ltae}
= \frac{\big| t \partial_{\xi_1, \xi_1}^2 \Lap (\xi) \big|}{\big| 2 \pi x_1 - t \partial_{\xi_1} \Lap (\xi) \big|^2}
\\
\lesssim \frac{|t| \,| \xi |^{\sigma - 2}}{|x_1 |^2}  \lesssim \frac{|t| N^{\sigma - 2}}{|x_1 |^2} \lesssim \frac{1}{N |x_1 |}
\end{ltae}
\]
and similarly
\[
\Big| \partial^2_{\xi_1, \xi_1} \Big( \frac{1}{2 \pi x_1 - t \partial_{\xi_1} \Lap (\xi)} \Big) \Big| \lesssim \frac{1}{N^2 |x_1 |} .
\]
Combining these estimates we obtain
\[
\Big| \partial_{\xi_1} \Big( \frac{1}{2 \pi x_1 - t \partial_{\xi_1} \Lap (\xi)} \partial_{\xi_1} \big( \frac{\chi_N^2 (\xi)}{2 \pi x_1 - t \partial_{\xi_1} \Lap (\xi)} \big) \Big) \Big|
\lesssim  \frac{1}{N^2 | x_1 |^2} \1_{| \xi |
  \approx N},
\]
and after integration over in $\xi$,
\[
\1_{| x_1 | \gtrsim t N^{\sigma - 1}} | K_N (t, x_1, x') | \lesssim N^{d - 2} | x_1 |^{- 2} .
\]
Since by \eqref{eq:trve} one has  
$| K_N (t, x_1, x') | < N^d$, we obtain
\[
\1_{| x_1 | \gtrsim t N^{\sigma - 1}} | K_N (t, x_1, x') | \lesssim N^d \langle N x_1 \rangle^{- 2} \,,
\]
and consequently
\[
\big\| \1_{| x_1 | \gtrsim t N^{\sigma - 1}} K_N (t, x_1, x') \big\|_{L^{1, \infty}_{e_1} (\R \times \R^d)} \leq \int_{\R} N^d  \langle N x_1 \rangle^{- 2} \dd x_1 \lesssim N^{d - 1},
\]
as required.
\end{proof*}

\begin{proof*}{Proof of \Cref{lem:dir-local-smoothing}}
Since the $\UP_{e_l}^{\Lap}$ is a bounded projection on $L^2$, it commutes
with the evolution $e^{i t \Lap}$ and with the Littlewood-Paley projections $\LP_{N}$.
Without loss of generality assume $e_l = e_1$ and 
let $T$ be defined
analogously to \eqref{eq:T-operator-max} as
\[
T f (t, x) \eqd \int_{\R^d} e^{2 \pi i \xi \cdot x - i t \Lap (\xi)} \chi_N (\xi) \1_{\mf{U}_{e_1}} (\xi) \FT{f} (\xi) d \xi \,.
\]
The assertion of the lemma is equivalent to
\[
\sup_{x_1 \in \R} \big\| T f (t, x_1, x') \big\|_{L^2_t L^2_{x'} ( I \times \R^{d - 1} )} \lesssim N^{-\frac{\sigma - 1}{2}} \| f \|_{L^2 ( \R^d )}.
\]
Using the Plancherel's identity in $x'$ gives that 
\[
\begin{ltae}
\big\| T f (t, x_1, x') \big\|_{L^2_t L^2_{x'} ( I \times \R^{d - 1} )} 
\\
\quad =
\int_{I\times\R^{d-1}} \Big| \int_{\R} e^{i 2 \pi \xi_1 x_1  - i t \Lap (\xi)} \chi_N (\xi) \1_{\mf{U}_{e_1}} (\xi) \FT{f} (\xi_1,\xi') \dd \xi_1 \Big|^2 \dd t \dd\xi'    
\end{ltae}
\]
Thus, let us fix $x_1 \in \R$ and $\xi' \in \R^{d- 1}$ and show that 
\begin{equation}\label{eq:ibfls}
\int_I \Big| \int_{\R} e^{i 2 \pi \xi_1 x_1  - i t  \Lap (\xi)} \chi_N (\xi) \1_{\mf{U}_{e_1}} (\xi) \FT{f} (\xi) d\xi_1 \Big|^2 \dd t
\\
\lesssim N^{-(\sigma -1)}\int_{\R} | \FT{f} (\xi) |^2 \dd   \xi_1 \,,
\end{equation}
where, as usual, $\xi = (\xi_1, \xi')$. Choose $N_0 \in 2^{\N}$ large enough that \eqref{eq:symbol-bddness} and 
\eqref{eq:symbol-curvature} hold for any $(\xi_1^0, \xi')$ with $|\xi_1^0, \xi'| \geq N$.

If $N \leq N_0$, by Cauchy-Schwarz inequality and $| I | \lesssim T_0$ one has
that the left-hand side of \eqref{eq:ibfls} is bounded by
$N | I | \big\| \FT{f} (\xi_1, \xi')\big\|_{L^2_{\xi_1}}^2$ and our claim follows.

Assume $N > N_0$. 
Since $\xi_1 \rightarrow \Lap (\xi_1, \xi')$ is smooth, the set $\big\{
\xi_1 \in \Omega_{\xi'} \suchthat | \nabla \Lap (\xi_1, \xi') \cdot e_1| \neq 0 \big\} \subset \R$ is open, and it can be represented as a countable union
of disjoint open intervals:
\[
\big\{ \xi_1 \in \Omega_{\xi'} \suchthat | \nabla \Lap (\xi_1,  \xi') \cdot e_1| \neq 0 \big\} = \bigcup_{\mc{W} \in \mc{I}}
\mc{W} .
\]
The integrand on the left-hand side of \eqref{eq:ibfls} vanishes unless
$(\xi_1, \xi') \in \mf{U}_{e_l}$ or unless
\[
\Big| \nabla \Lap (\xi_1, \xi') \cdot e_1 \Big| > \frac{\big| \nabla \Lap (\xi_1, \xi') \big|}{2 \sqrt{d}} .
\]

Let $\mc{I}_0 \subset \mc{I}$ be the sub-collection of intervals such that $\mc{W} \in \mc{I}_0$ if and only if
$\mc{W} \cap \{ \xi_1 \st (\xi_1, \xi') \in \mf{U}_{e_1}\} \neq \emptyset$. We claim that $\mc{I}_0$ is finite with cardinality independent of $N$. 
Indeed, for fixed $(\xi_1^-, \xi_1^+) := \mc{W} \in \mc{I}_0$ we can without loss of generality assume $\mc{W} \subset \textrm{supp} (\chi_N(\cdot, \xi') )$, because
there are at most four intervals in $\mc{I}$ that contain an endpoint (one of at most four) of  $\spt (\chi_N(\cdot, \xi') )$.  
Fix $\xi_1^0 \in \mc{W} \cap \mf{U}_{e_1}$, and in particular 
\begin{equation}\label{eq:lbfgt}
\big| \nabla \Lap (\xi_1^0, \xi') \cdot e_1 \big| > \frac{\big|\nabla \Lap (\xi_1^0, \xi') \big|}{2 \sqrt{d}} .
\end{equation}
Since $(\xi_1^0, \xi') \in \spt (\chi_N)$, then
$|(\xi_1^0, \xi')| \gtrsim N \geq N_0$, and
$\big| \nabla \Lap (\xi_1^0, \xi') \big| \gtrsim N^{\sigma - 1}$ by \eqref{eq:symbol-bddness}.
In particular,
$| \nabla \Lap (\xi_1^0, \xi') \cdot e_1 | \gtrsim N^{\sigma - 1}$ by \eqref{eq:lbfgt}.  On
the other hand,
$\nabla \Lap (\xi_1^-, \xi') \cdot e_1 = \nabla \Lap (\xi_1^+, \xi') \cdot e_1 = 0$. The bound
on the second derivative in \eqref{eq:symbol-bddness} yields
$| \partial_{\xi_1} ( \nabla \Lap (\xi_1^-, \xi') \cdot e_1 ) | = | \partial_{\xi_1}^2 \Lap (\xi_1^-,
\xi')| \lesssim N^{\sigma - 2}$.  Then, by the mean value theorem,
$| \mc{W}| \gtrsim N$. Since the intervals $\mc{W} \in \mc{I}_0$ are pairwise
disjoint and $\mc{W} \subset \spt (\chi_N(\cdot, \xi') )$, the claimed bound
on the cardinality of $\mc{I}_0$ follows.

Observe that $\xi_1 \mapsto \partial_{\xi_l} \Lap (\xi_1, \xi')$ does not change sign on
$\mc{W}$. Hence, in \eqref{eq:ibfls} restricted to $\mc{W}$, we use the
change of variables $\xi_1 \rightarrow \theta = \Lap (\xi_1, \xi')$, apply Plancherel
identity in the variable $t$, and change back to the variable $\xi_1$
and obtain after a use of \eqref{eq:lbest} on $\mf{U}_{e_1}$ and
\eqref{eq:symbol-bddness} on $\spt (\chi_N(\cdot, \xi') )$


\[ \begin{ltae} \int_I \Big| \int_{\mathclap{\mc{W}}} e^{2 \pi i \xi_1 x_1 - i t \Lap
  (\xi_1, \xi')} \chi_N (\xi) \1_{\mf{U}_{e_1}} (\xi) \FT{f} (\xi_1, \xi') d \xi_1 \Big|^2 \dd t
\\
\leq \int_{\R} \Big| \int_{\mathclap{\theta ( \mc{W} )}} e^{2 \pi i x_1 \xi_1 (\theta) - i t \theta}
\chi_N (\xi_1 (\theta), \xi') \1_{\mf{U}_{e_1}} (\xi_1 (\theta), \xi') \FT{f} (\xi_1(\theta), \xi') \frac{d
  \theta}{\partial_{\xi_1} \Lap (\xi_1(\theta), \xi')} \Big|^{2} \dd t
\\
\leq \int_{\mathclap{\theta ( \mc{W} )}} \Big| e^{2 \pi i (x_1 \xi_1 (\theta) + \xi' \cdot x')} \chi_N
(\xi_1 (\theta), \xi') \1_{\mf{U}_{e_1}} (\xi_1 (\theta), \xi')
\FT{f} (\xi_1 (\theta), \xi') \frac{1}{\partial_{\xi_1} \Lap (\xi_1 (\theta), \xi')} \Big|^2 d \theta\\
\leq \int_{\R} \Big| \chi_N(\xi_1, \xi') \1_{\mf{U}_{e_l}} (\xi_1, \xi') \FT{f}(\xi_1, \xi')
\frac{1}{\partial_{\xi_1} \Lap (\xi_1, \xi')} \Big|^2 \big|\partial_{\xi_1} \Lap (\xi_1, \xi') \big| d \xi_1
\\
\lesssim \int_{\R} \chi_N(\xi_1, \xi') | \FT{f} (\xi_1, \xi') |^2 \frac{d \xi_1}{\big | \nabla \Lap (\xi_1,
  \xi') \big|} \lesssim N^{- (\sigma -1)} \int_{\R} | \FT{f} (\xi_1, \xi') |^2 d \xi_1,
\end{ltae} \]
as desired.
\end{proof*}

\section{Linear non-homogeneous estimates}\label{sec:linear-nonhomogeneous}

This section contains estimates for the solution $v$ of the non-homogeneous
Schrö\-din\-ger equation
\begin{equation}\label{eq:liheq}
\begin{cases}
(i \partial_t - \Lap) v = h & \text{ on } I \times \R^d,
\\
v (t_0) = 0 & 
\end{cases} 
\end{equation}
in Besov -type spaces. Specifically, 
for $\epsilon \in [0, 1)$ and $\xs > 0$ we set
\begin{equation}\label{eq:defn:X-norm} 
\begin{ltae}
\|v\|_{X^{\xs, \epsilon} (I)} \eqd \Big( \sum_{N \in 2^{\N}} N^{2 \xs}  \|\LP_{N} v\|^2_{X^{\epsilon}_N (I)} \Big)^{\frac{1}{2}},\\
\|v\|_{X^{\epsilon}_N (I)} \eqd \begin{ltae} \| v \|_{L_t^{\infty} L_x^{\frac{2}{1 - \epsilon}} (I \times
  \R^d)} + \| v \|_{L_t^{2 \frac{d + \sigma}{d}} L_x^{2 \frac{d + \sigma}{d}} (I \times \R^d)}
\\
+ \sum_{l = 1}^d N^{- \frac{d - 1}{2}} \| v \|_{L_{e_l}^{\frac{2}{1 -\epsilon}, \infty} (I \times
  \R^d)}
\\
+ \sum_{l = 1}^d N^{\frac{\sigma - 1}{2}} \Big\| \UP_{e_l}^{\Lap} v \Big\|_{L_{e_l}^{\infty,
    \frac{2}{1 - \epsilon}} (I \times \R^d)} \,,
\end{ltae}
\end{ltae} 
\end{equation}
where $\UP_{e_l}^{\Lap}$ is as in Lemma \ref{lem:dir-local-smoothing}.
Observe that the norm of $X^{\xs, \epsilon}$   
contains Strichartz-type component $L_t^{2 \frac{d + \sigma}{d}} L_x^{2 \frac{d + \sigma}{d}} (I\times \R^d)$, which is admissible, 
and $L_t^{\infty} L_x^{\frac{2}{1 - \epsilon}} (I \times \R^d)$ which is close to an admissible space 
$L_t^{\infty} L_x^{2} (I \times \R^d)$ if $\epsilon > 0$ is small, see \Cref{lem:strichartz}. 
The other two components are close to  
directional maximal (see \Cref{lem:dir-maximal}), and
directional local smoothing (see \Cref{lem:dir-local-smoothing}) spaces.

We assume that the non-homogeneous term $h$ is controlled in a space
dual to $X^{\xs,\epsilon} (I)$. We set
\begin{equation}\label{eq:defn:X-norm-dual}
\begin{ltae}
\| h \|_{X^{\xs, \epsilon} (I)^{\ast}} \eqd \Big( \sum_{N \in 2^{\N}} N^{2 \xs} \|\LP_{N} h\|^2_{X^{\epsilon}_N (I)^{\ast}} \Big)^{\frac{1}{2}}
\\
\|h\|_{X^{\epsilon}_N (I)^{\ast}}  \eqd \sup \Big\{ \Big| \int_{I \times \R^d} h (x, t) v_{\ast} (x, t) \dd t \dd x \Big| \st \| v_{\ast} \|_{X^{\epsilon}_N (I)} \leq 1 \Big\} .
\end{ltae} 
\end{equation}
The main result of this section is the following a priori bound on $v$ in terms of $h$.

\begin{proposition}
\label{prop:main-linear-estimate} The solution $v : I \times \R^d
\rightarrow \mathbb{C}$ to
\[ \begin{cases}
(i \partial_t - \Lap) v = h & \text{ on } I \times \R^d,\\
v (0) = 0 &
\end{cases} \]
satisfies for any $\epsilon \in\big( 0, \frac{\sigma}{d + \sigma}\big)$ and any $N \in 2^{\N}$ the bound
\[ {\| \LP_{N} v \|_{X^{\epsilon}_N (I)}}  \lesssim_{\epsilon} N^{| O(\epsilon) |} \| \LP_{N} h \|_{X^{\epsilon}_N (I)^{\ast}} \,, \]
where  the function $\epsilon \mapsto | O (\epsilon) |$ depends only on $\Lap,
d, \sigma$, and $\xs > 0$.
As a consequence,
\[ \|v\|_{X^{\xs, \epsilon} (I)} \lesssim \| h \|_{X^{\xs + | O(\epsilon) |, \epsilon} (I)^{\ast}} \]
for any $\xs > 0$.
\end{proposition}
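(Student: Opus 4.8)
The plan is to reduce the a priori bound to the individual estimates already proved in \Cref{sec:generalities} via a duality/Duhamel argument, then sum dyadically. First I would recall that by Duhamel's formula the solution of \eqref{eq:liheq} with $t_0=0$ is
\[
v(t) = -i \int_0^t e^{-i(t-t')\Lap} h(t')\,\dd t',
\]
and that $\LP_N$ commutes with the evolution, so it suffices to work with $h$ replaced by $\LP_N h$ and to estimate $\LP_N v$ in each of the four norms comprising $\|\cdot\|_{X^\epsilon_N(I)}$ separately. Each of these four pieces is, up to the $\epsilon$-perturbation of exponents, the norm appearing in one of \Cref{lem:strichartz} (the two Strichartz-type pieces), \Cref{lem:dir-maximal} (the directional maximal piece), or \Cref{lem:dir-local-smoothing} (the directional local smoothing piece). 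The $L^{2/(1-\epsilon)}$ and $L^{2/(1-\epsilon),\infty}$ in place of $L^2$, $L^{2,\infty}$ is absorbed by Bernstein's inequality (\Cref{lem:bernstein}) applied on the dyadic annulus $\spt\FT{\LP_N f}$, which costs exactly a factor $N^{d\epsilon/2}=N^{|O(\epsilon)|}$; this is the sole source of the $N^{|O(\epsilon)|}$ loss.

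The core step is the \emph{energy estimate}: for a single dyadic block one shows
\[
\|\LP_N v\|_{X^\epsilon_N(I)} \lesssim_\epsilon N^{|O(\epsilon)|}\, \sup\Big\{\Big|\int_{I\times\R^d}\LP_N h\, \overline{w}\,\dd t\,\dd x\Big| \st \|w\|_{X^\epsilon_N(I)}\le 1\Big\},
\]
which is exactly the definition of the right-hand side norm. To get this, I would run a $TT^*$-type / Christ--Kiselev argument: the homogeneous estimates of \Cref{lem:strichartz}, \Cref{lem:dir-maximal}, \Cref{lem:dir-local-smoothing} say precisely that $f\mapsto e^{-it\Lap}\LP_N f$ is bounded from $L^2_x$ into each of the four spaces $Z$ making up $X^\epsilon_N$ (after paying $N^{|O(\epsilon)|}$ via Bernstein, and noting the local smoothing piece carries the projection $\UP^{\Lap}_{e_l}$ built into the norm). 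Dualizing, $g\mapsto \int e^{it\Lap}g\,\dd t$ maps $Z^*\to L^2_x$, and composing, the untruncated Duhamel operator $h\mapsto \int_\R e^{-i(t-t')\Lap}h(t')\dd t'$ maps $\bigcap Z^* \to \bigcap Z = X^\epsilon_N$; the retarded truncation $\int_0^t$ is recovered from the untruncated one by the Christ--Kiselev lemma, which applies since all exponents in $X^\epsilon_N$ are strictly larger than the dual exponents (this is where $\epsilon<\frac{\sigma}{d+\sigma}$ enters, guaranteeing $2\frac{d+\sigma}{d}>2$ remains on the admissible side and the $L^\infty_t$ endpoint is avoided). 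Finally, $\|\LP_N h\|_{X^\epsilon_N(I)^*}$ is by definition the operator-norm pairing against the unit ball of $X^\epsilon_N(I)$, so the single-block bound follows.

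Squaring, multiplying by $N^{2\xs}$, and summing over $N\in 2^\N$ then gives
\[
\|v\|_{X^{\xs,\epsilon}(I)}^2 = \sum_N N^{2\xs}\|\LP_N v\|_{X^\epsilon_N(I)}^2 \lesssim_\epsilon \sum_N N^{2\xs+2|O(\epsilon)|}\|\LP_N h\|_{X^\epsilon_N(I)^*}^2 = \|h\|_{X^{\xs+|O(\epsilon)|,\epsilon}(I)^*}^2,
\]
using that $\LP_N$ on the right is really $\LP_N$ applied to $h$ and that the Littlewood--Paley pieces are almost orthogonal (finite overlap of the supports of $\phi_N$). I expect the main obstacle to be the bookkeeping in the Christ--Kiselev step: one must check that the retarded Duhamel operator inherits boundedness into each of the four component spaces simultaneously, in particular that the directional-norm components $L^{a,b}_{e_l}$ — which are mixed-norm spaces, not $L^p_tL^q_x$ — still fall under a Christ--Kiselev-type lemma (they do, since Christ--Kiselev only needs a quasi-Banach target with the right exponent ordering, and here the "$t$" integration is interior to both directional norms). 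A secondary subtlety is that the $\UP^{\Lap}_{e_l}$ projections in the local smoothing term are not Littlewood--Paley-type multipliers and need not preserve the other norms; one handles this by testing only against $w$ of the form $\UP^{\Lap}_{e_l}w$ in that component of the pairing, which is legitimate since $\UP^{\Lap}_{e_l}$ is an $L^2$-orthogonal projection commuting with $e^{-it\Lap}$ and $\LP_N$.
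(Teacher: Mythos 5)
Your plan follows the same route as the paper's two-lemma decomposition (\Cref{lem:linear-evolution-disjoint-times}--\Cref{lem:linear-evolution-christ-kiselev}): interpolate the homogeneous estimates of \Cref{lem:strichartz}, \Cref{lem:dir-maximal}, \Cref{lem:dir-local-smoothing} against trivial $L^\infty_{t,x}$ bounds via Bernstein to get $\| e^{-is\Lap}\LP_N u \|_{X^\epsilon_N(I)} \lesssim N^{|O(\epsilon)|}\|\LP_N u\|_{L^2}$, dualize and compose to control the un-retarded Duhamel operator, then recover the retarded operator via Christ--Kiselev, and finally sum in $N$.

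Two points where the sketch papers over the actual content. First, the Christ--Kiselev step cannot be invoked off the shelf: the classical lemma applies to $T:L^a_t(A)\to L^b_t(B)$ with $a<b$ and the time integration isolated in the outer position, whereas the directional components $L^{a,b}_{e_l}$ of $X^\epsilon_N$ have $t$ interleaved inside the inner $L^b_{t,x'}$ norm. The paper therefore replaces $X^\epsilon_N$ by an equivalent $\ell^{\frac{2}{1-\epsilon}}$-aggregated norm $\tilde{X}^\epsilon_N$, proves concatenation inequalities for time-disjoint pieces, $\|v_1+v_2\|_{\tilde{X}^\epsilon_N}^{\frac{2}{1-\epsilon}}\le\|v_1\|_{\tilde{X}^\epsilon_N}^{\frac{2}{1-\epsilon}}+\|v_2\|_{\tilde{X}^\epsilon_N}^{\frac{2}{1-\epsilon}}$ together with a dual inequality with exponent $\frac{2}{1+\epsilon}$, and then runs a Whitney-type dyadic decomposition of $\{s<t\}$ by hand, summing a geometric series because $\frac{2}{1-\epsilon}>\frac{2}{1+\epsilon}$. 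You correctly sense that "exponent ordering" is what matters, but the concatenation argument is the technical heart of the proof and must be supplied. Second, your stated reason for the constraint $\epsilon<\frac{\sigma}{d+\sigma}$ does not hold up: the assertion "$2\frac{d+\sigma}{d}>2$ remains on the admissible side" is true unconditionally and has nothing to do with $\epsilon$. The constraint in fact comes from the concatenation inequality for $L^p_t L^q_x$, which only holds with aggregation exponent $r\le p$; applied to the Strichartz component $L^{2\frac{d+\sigma}{d}}_t L^{2\frac{d+\sigma}{d}}_x$ with $r=\frac{2}{1-\epsilon}$, this forces $\frac{2}{1-\epsilon}\le 2\frac{d+\sigma}{d}$, i.e.\ $\epsilon\le\frac{\sigma}{d+\sigma}$. (Your final worry about $\UP^{\Lap}_{e_l}$ is unnecessary: the local-smoothing component of the norm is already written with $\UP^{\Lap}_{e_l}$ applied, and since that projector commutes with $e^{-it\Lap}$ and $\LP_N$, no special testing device is needed.)
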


The proof Proposition \ref{prop:main-linear-estimate} 
relies on a sequence of lemmata that combine interpolation, dual
versions of \Cref{lem:strichartz}, \Cref{lem:dir-maximal}, and
\Cref{lem:dir-local-smoothing} and the ``Christ-Kiselev'' lemma \cite{christMaximalFunctionsAssociated2001}. The solution to \eqref{eq:liheq} is given by
the Duhamel formula:
\begin{equation} \label{eq:duhamel}
v (t, x) = \int_0^t e^{- i (t - s) \Lap} h (s, x) \dd s.
\end{equation}
The results of this section can be seen as properties of the mapping $h
\mapsto v$ between spaces $X^{\xs + | O (\epsilon) |,
  \epsilon} (I)^{\ast}$  and $X^{\xs, \epsilon} (I)$.

\begin{lemma} \label{lem:linear-evolution-disjoint-times}
Fix a time interval $I \subset \R$ with $0 \in I, | I | < T_0$ and let 
$J, J' \subset I$ be two 
disjoint sub-intervals $I$. For any $N \in 2^{\N}$ it holds that
\[ \Big\| \int_{J'} e^{- i (t - s) \Lap} \LP_{N} h (s) \dd s \Big\|_{X^{\epsilon}_N (J)} \lesssim_{\epsilon} N^{| O (\epsilon) |} \| \LP_{N} h \|_{X^{\epsilon}_N (J')^{\ast}} \,.
\]
We stress hat the implicit constant do not depend on $J$, $J'$, or $N$.
\end{lemma}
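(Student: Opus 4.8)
The plan is to decompose the operator $h \mapsto \int_{J'} e^{-i(t-s)\Lap} \LP_N h(s)\,\dd s$ through $L^2_x$ by inserting the identity $e^{-i(t-s)\Lap} = e^{-it\Lap} e^{is\Lap}$, so that we write
\[
\int_{J'} e^{-i(t-s)\Lap}\LP_N h(s)\,\dd s = e^{-it\Lap}\LP_N \Big(\int_{J'} e^{is\Lap}\LP_N h(s)\,\dd s\Big) = e^{-it\Lap}\LP_N g, \qquad g \eqd \int_{J'} e^{is\Lap}\LP_N h(s)\,\dd s.
\]
Here the extra $\LP_N$ factors are harmless because $\chi_N \phi_N = \phi_N$. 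The point of the disjointness of $J$ and $J'$ is exactly that this factorization decouples: on $J$ we only ever see $e^{-it\Lap}$ applied to the fixed $L^2$ function $g$, with no Duhamel integral in the way, so we may feed $g$ directly into the homogeneous estimates of Section~\ref{sec:generalities}.

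I would carry it out in two halves. \emph{First half (forward bound):} estimate $\|e^{-it\Lap}\LP_N g\|_{X^\epsilon_N(J)} \lesssim_\epsilon N^{|O(\epsilon)|}\|\LP_N g\|_{L^2_x}$. Each of the four terms in the definition of $X^\epsilon_N$ in \eqref{eq:defn:X-norm} is handled by an estimate already proved: the $L^\infty_t L^{2/(1-\epsilon)}_x$ piece by \Cref{lem:strichartz} (at the admissible pair $(\infty,2)$) combined with the Bernstein inequality \Cref{lem:bernstein} on the annulus $|\xi|\approx N$ to absorb the $\epsilon$-loss into $N^{|O(\epsilon)|}$; the $L^{2(d+\sigma)/d}_t L^{2(d+\sigma)/d}_x$ piece directly by \Cref{lem:strichartz} since $(2\tfrac{d+\sigma}{d}, 2\tfrac{d+\sigma}{d})$ is $\Lap$-admissible; the directional maximal piece $N^{-(d-1)/2}\|\cdot\|_{L^{2/(1-\epsilon),\infty}_{e_l}}$ by \Cref{lem:dir-maximal} (which gives the $L^{2,\infty}_{e_l}$ bound) plus interpolation against Bernstein to upgrade the outer $L^2_{x_l}$ to $L^{2/(1-\epsilon)}_{x_l}$ at the cost of $N^{|O(\epsilon)|}$; and the directional local smoothing piece $N^{(\sigma-1)/2}\|\UP^{\Lap}_{e_l}\cdot\|_{L^{\infty,2/(1-\epsilon)}_{e_l}}$ by \Cref{lem:dir-local-smoothing} together with a Bernstein upgrade of the inner $L^2_{x'}$ to $L^{2/(1-\epsilon)}_{x'}$ across the $(d-1)$ transverse variables. \emph{Second half (dual bound):} estimate $\|\LP_N g\|_{L^2_x}\lesssim_\epsilon N^{|O(\epsilon)|}\|\LP_N h\|_{X^\epsilon_N(J')^*}$. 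By $L^2$ duality,
\[
\|\LP_N g\|_{L^2_x} = \sup_{\|\psi\|_{L^2_x}\le 1}\Big|\int_{\R^d}\Big(\int_{J'} e^{is\Lap}\LP_N h(s)\,\dd s\Big)\bar\psi(x)\,\dd x\Big| = \sup_{\|\psi\|_{L^2_x}\le 1}\Big|\int_{J'}\int_{\R^d} \LP_N h(s,x)\,\overline{e^{-is\Lap}\psi(x)}\,\dd x\,\dd s\Big|,
\]
and by the definition \eqref{eq:defn:X-norm-dual} of $\|\cdot\|_{X^\epsilon_N(J')^*}$ the right-hand side is at most $\|\LP_N h\|_{X^\epsilon_N(J')^*}\cdot\sup_{\|\psi\|_{L^2_x}\le1}\|e^{-is\Lap}\LP_N\psi\|_{X^\epsilon_N(J')}$, where I again inserted a $\LP_N$ freely onto the test function. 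But the last supremum is precisely the forward bound from the first half, applied on $J'$ in place of $J$, so it is $\lesssim_\epsilon N^{|O(\epsilon)|}$. Composing the two halves gives the claim, with a single $N^{|O(\epsilon)|}$ (the two $\epsilon$-losses add but $|O(\epsilon)|+|O(\epsilon)|$ is still of the form $|O(\epsilon)|$), and the constant depends only on $\Lap,d,\sigma,\xs$ and not on $J,J',N$ since every ingredient estimate is $N$- and interval-uniform.

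The main obstacle is the bookkeeping of the $\epsilon$-losses: the homogeneous estimates of Section~\ref{sec:generalities} are stated at the endpoint exponents ($L^2$-based, $L^{2,\infty}$, $L^{\infty,2}$), whereas $X^\epsilon_N$ uses the slightly fattened exponents $\tfrac{2}{1-\epsilon}$, so each term requires an interpolation-with-Bernstein step to trade a small power of $N$ for the exponent shift, and I must check that the total accumulated power stays $\lesssim \epsilon$ uniformly — this is where the hypothesis $\epsilon \in (0,\tfrac{\sigma}{d+\sigma})$ and the admissibility window $q \in [2,\tfrac{2d}{d-\sigma})$ enter, ensuring the interpolated Strichartz pairs remain admissible. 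A secondary point is making sure the insertion of extra $\LP_N$'s (and the passage $\chi_N\phi_N=\phi_N$) is legitimate on both the primal and dual sides; this is routine since all operators in sight are Fourier multipliers with nested supports on the annulus $|\xi|\approx N$. Everything else is a direct invocation of \Cref{lem:strichartz}, \Cref{lem:dir-maximal}, \Cref{lem:dir-local-smoothing}, and \Cref{lem:bernstein}, with no new oscillatory-integral analysis required.
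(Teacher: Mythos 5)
Your proposal is correct and matches the paper's proof almost step for step: both factor the operator through $L^2_x$ via $e^{-i(t-s)\Lap} = e^{-it\Lap}e^{is\Lap}$, prove the homogeneous bound $\|e^{-it\Lap}\LP_N g\|_{X^\epsilon_N(J)} \lesssim N^{|O(\epsilon)|}\|\LP_N g\|_{L^2}$ by interpolating each component against the Bernstein-controlled $L^{\infty,\infty}_{e_l}$ estimate, and then dualize to get the $L^2 \leftarrow X^\epsilon_N(J')^*$ bound. The only loose phrasing is "Bernstein upgrade of the inner $L^2_{x'}$" for the local-smoothing piece — since the inner norm is $L^{2/(1-\epsilon)}_{t,x'}$ and Bernstein cannot improve time integrability, the correct mechanism (which the paper uses, and which your "interpolation against Bernstein" phrase elsewhere correctly suggests) is H\"older interpolation between $L^{\infty,2}_{e_l}$ and $L^{\infty,\infty}_{e_l}$, the latter obtained from $L^\infty_t L^2_x$ Strichartz plus spatial Bernstein.
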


\begin{lemma}\label{lem:linear-evolution-christ-kiselev}
Let $I \subset \R$ be a time interval with $0 \in I, | I | < T_0$. For any
$\epsilon \in \Big( 0,\frac{\sigma}{d} \Big)$ it holds that
\[ \Big\| \int_0^t e^{- i (t - s) \Lap} \LP_{N} h (s) \dd s \Big\|_{X^{\epsilon}_N (I)} \lesssim_{\epsilon} N^{| O (\epsilon) |} \| \LP_{N} h (s) \|_{X^{\epsilon}_N (I)^{\ast}} \]
for any $N \in 2^{\N}$. The implicit constants do not depend on $N$.
\end{lemma}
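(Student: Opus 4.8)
The plan is to deduce the lemma from \Cref{lem:linear-evolution-disjoint-times} by a Christ--Kiselev argument. The operator in question is the ``causal'' Duhamel operator $\mc{T}h(t) \eqd \int_0^t e^{-i(t-s)\Lap}\LP_{N}h(s)\dd s$, whereas \Cref{lem:linear-evolution-disjoint-times} controls the ``retarded'' operator $\mc{S}_{J,J'}h(t)\eqd \1_{t\in J}\int_{J'}e^{-i(t-s)\Lap}\LP_{N}h(s)\dd s$ for disjoint subintervals $J,J'\subset I$ with the uniform bound $\|\mc{S}_{J,J'}h\|_{X^\epsilon_N(J)}\lesssim_\epsilon N^{|O(\epsilon)|}\|\LP_N h\|_{X^\epsilon_N(J')^*}$. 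The only subtlety in applying the abstract Christ--Kiselev lemma \cite{christMaximalFunctionsAssociated2001} is that it is usually stated for operators between Lebesgue spaces $L^{p}\to L^{q}$ with $p<q$, so the first step is to identify the correct exponents for our setting.

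First I would record the key exponent inequality. The input space $X^\epsilon_N(I)^*$ is, up to $N$-dependent weights, built from components dual to $L^\infty_t L^{2/(1-\epsilon)}_x$, $L^{2(d+\sigma)/d}_tL^{2(d+\sigma)/d}_x$, and the two directional pieces; the relevant ``time integrability'' of the input $h$ is governed by the dual exponent $\big(2\tfrac{d+\sigma}{d}\big)' = \tfrac{2(d+\sigma)}{d+2\sigma}$, while the output $X^\epsilon_N(I)$ has time integrability at worst $2\tfrac{d+\sigma}{d}$ (ignoring the $L^\infty_t$ and $L^\infty$-directional components, which are handled by the small loss $N^{|O(\epsilon)|}$ coming from Bernstein in $t$ on the frequency-localized piece, exactly as in the proof of \Cref{prop:main-linear-estimate}). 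Thus one needs $\big(2\tfrac{d+\sigma}{d}\big)' < 2\tfrac{d+\sigma}{d}$, which always holds, but after inserting the $\epsilon$-perturbed exponents $2/(1-\epsilon)$ this becomes the quantitative condition $\epsilon < \sigma/d$ stated in the hypothesis; I would verify this by a direct computation comparing $\tfrac{2(d+\sigma)}{d+2\sigma}$ against $\tfrac{2(1-\epsilon)(d+\sigma)}{d}$ or the analogous directional exponents. This strict inequality is precisely what the Christ--Kiselev lemma requires.

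Next I would run the Christ--Kiselev machinery. Decompose $I$ dyadically: write $\mc{T} = \sum_{k\ge 0}\sum_{J,J'} \mc{S}_{J,J'}$ where for each scale $k$ the pairs $(J,J')$ range over adjacent dyadic subintervals of $I$ (of comparable length $|I|2^{-k}$) with $J'$ preceding $J$, in the standard way so that the diagonal is exhausted as $k\to\infty$. For fixed $k$ the pieces $\mc{S}_{J,J'}$ act on disjoint time slabs in both input and output, so by almost-orthogonality in the $\ell^2$-in-$N$-free setting — here we are at a single fixed $N$, so one just sums the finitely-overlapping contributions — one gets $\big\|\sum_{J,J'}\mc{S}_{J,J'}h\big\|_{X^\epsilon_N(I)}\lesssim_\epsilon N^{|O(\epsilon)|}\big(\sum_{J'}\|\LP_N h\|_{X^\epsilon_N(J')^*}^{r}\big)^{1/r}$ for the appropriate exponent $r$ dictated by the input space, uniformly in $k$. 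The gain at scale $k$ is a factor $2^{-\delta k}$ with $\delta = \delta(\epsilon,d,\sigma)>0$ coming exactly from the gap $2\tfrac{d+\sigma}{d} - \big(2\tfrac{d+\sigma}{d}\big)'$ (equivalently, from $\epsilon<\sigma/d$ with room to spare) via Hölder on the dyadic intervals of length $|I|2^{-k}$; summing the geometric series in $k$ yields the claim, with the final implicit constant depending on $\epsilon$ (blowing up as $\epsilon\uparrow\sigma/d$) but not on $N$ since every use of \Cref{lem:linear-evolution-disjoint-times} came with an $N$-uniform constant.

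The main obstacle is bookkeeping the mixed structure of $X^\epsilon_N$ inside the Christ--Kiselev framework: the space is not a single $L^{p}_tL^{q}_x$ but a sum of several such norms plus directional $L^{a,b}_{e_l}$ norms, and the abstract lemma is cleanest for a single pair of Lebesgue exponents. I would handle this by observing that Christ--Kiselev only needs (i) a uniform off-diagonal bound for the retarded operator, which is \Cref{lem:linear-evolution-disjoint-times}, and (ii) that the output norm behaves subadditively under disjoint time decompositions with the correct power — i.e., $\big\|\sum_j \1_{J_j}(t)g_j\big\|_{X^\epsilon_N(I)}\lesssim \big(\sum_j \|g_j\|_{X^\epsilon_N(J_j)}^{2(d+\sigma)/d}\big)^{d/2(d+\sigma)}$ — which follows component-by-component from the disjointness of the $J_j$ in the time variable (for the $L^p_tL^q_x$ and $L^{a,b}_{e_l}$ pieces with finite time exponent) together with the standard $L^\infty_t$/$L^\infty$-directional reductions absorbed into the $N^{|O(\epsilon)|}$ loss, exactly as in \Cref{prop:main-linear-estimate}. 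Granting these two structural facts, the Christ--Kiselev summation goes through verbatim, and the restriction $\epsilon\in(0,\sigma/d)$ is exactly the threshold that keeps the input time-exponent strictly below the output time-exponent.
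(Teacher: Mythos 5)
Your overall strategy is the same as the paper's: deduce the lemma from \Cref{lem:linear-evolution-disjoint-times} by a Christ--Kiselev argument, using (i) the uniform off-diagonal bound and (ii) subadditivity of $X^\epsilon_N$ and superadditivity of $X^{\epsilon,*}_N$ under disjoint time decompositions. However, your identification of the exponents and of the role of the hypothesis $\epsilon < \sigma/d$ is incorrect, and this is not a cosmetic slip --- it would derail the argument.

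The subadditivity you need is $\|\sum_j v_j\|_{X^\epsilon_N(I)}^r \lesssim \sum_j \|v_j\|_{X^\epsilon_N(J_j)}^r$, and for a mixed norm $L^{p,q}_{e_l}$ with supports disjoint in time this only holds for $r \le \min\{p,q\}$. The directional components $L^{\frac{2}{1-\epsilon},\infty}_{e_l}$ and $L^{\infty,\frac{2}{1-\epsilon}}_{e_l}$ therefore force $r \le \frac{2}{1-\epsilon}$; your proposed $r = 2\frac{d+\sigma}{d}$ is strictly larger than this when $\epsilon < \sigma/(d+\sigma)$, so the subadditivity estimate you write down is false for the directional pieces. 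The correct exponent is $r = \frac{2}{1-\epsilon}$ (after replacing $X^\epsilon_N$ by an equivalent $\ell^r$-combination $\tilde X^\epsilon_N$ of its components). The condition on $\epsilon$ is what guarantees $\frac{2}{1-\epsilon} \le 2\frac{d+\sigma}{d}$, i.e.\ that the isotropic $L^{2\frac{d+\sigma}{d}}_t L^{2\frac{d+\sigma}{d}}_x$ component does not obstruct the subadditivity with $r = \frac{2}{1-\epsilon}$ --- it is \emph{not} the Christ--Kiselev exponent gap. The gap that powers the geometric summation is instead between the superadditivity exponent $\frac{2}{1+\epsilon}$ of $\tilde X^{\epsilon,*}_N$ and the subadditivity exponent $\frac{2}{1-\epsilon}$ of $\tilde X^\epsilon_N$; the per-scale gain is $2^{-\epsilon n}$, purely from $\epsilon > 0$. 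Relatedly, your ``dyadic decomposition into subintervals of comparable length'' is not the right construction: one needs a Whitney decomposition of $I$ adapted to the mass of $h$, chosen so that $\|h\|_{\tilde X^{\epsilon,*}_N(I^n_k)}^{2/(1+\epsilon)} \approx 2^{-n}\|h\|_{\tilde X^{\epsilon,*}_N(I)}^{2/(1+\epsilon)}$ uniformly in $k$; a length-uniform decomposition does not give the decay you need. Fixing these three points would bring your plan in line with the paper's proof.
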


\begin{proof*}{Proof of \Cref{lem:linear-evolution-disjoint-times}}
Note that \Cref{lem:strichartz}, Bernstein inequality (Lemma
\ref{lem:bernstein}), and Fubini theorem imply
\[
\big\| e^{- i s \Lap} \LP_{N} u \big\|_{L_{e_l}^{\infty, \infty} ( I\times \R^d )}
\begin{ltae}
= \big\| e^{- i s \Lap} \LP_{N} u \big\|_{L^{\infty}_t L^{\infty}_x  ( I \times \R^d )}
\\
\lesssim N^{\frac{d}{2}} \big\| e^{- i s \Lap} \LP_{N} u \big\|_{L^{\infty}_t L^2_x ( I \times \R^d )} \lesssim  N^{\frac{d}{2}} \| \LP_{N} u \|_{L^2 ( \R^d )}\,.
\end{ltae} \] Then, interpolating \Cref{lem:strichartz},
\Cref{lem:dir-maximal}, and \Cref{lem:dir-local-smoothing} and using
the Hölder inequality implies
\begin{align*} 
\big\| e^{- i s \Lap} \LP_{N} u \big\|_{L^{\infty}_t L^{\frac{2}{1 -   \epsilon}}_x ( I \times \R^d )}
&\lesssim \big\| e^{- i s \Lap} \LP_{N} u \big\|_{L^{\infty}_t L^2_x ( I \times \R^d )}^{1 - \epsilon} \big\| e^{- i s \Lap} \LP_{N} u \big\|_{L^{\infty}_t L^{\infty}_x ( I \times \R^d)}^{\epsilon}
\\
&\lesssim N^{ | O (\epsilon) |} \| \LP_{N} u \|_{L^2( \R^d )} \,,
\end{align*}
and 
\[\big\| e^{- i s \Lap} \LP_{N} u \big\|_{L_t^{2 \frac{d + \sigma}{d}} L_x^{2 \frac{d + \sigma}{d}} (I \times \R^d)}  \lesssim  \| \LP_{N} u \|_{L^2 ( \R^d )} \,,
\]
and
\begin{align*}
\big\| e^{- i s \Lap} \LP_{N} u \big\|_{L_{e_l}^{\frac{2}{1 - \epsilon},\infty} ( I \times \R^d )}
&\lesssim \big\| e^{- i s \Lap} \LP_{N} u \big\|_{L_{e_l}^{2, \infty} ( I \times \R^d )}^{1 - \epsilon} \big\| e^{- i s \Lap} \LP_{N} u \big\|_{L_{e_l}^{\infty, \infty} ( I \times \R^d)}^{\epsilon}
\\ 
&\lesssim N^{\frac{d - 1}{2} + | O (\epsilon) |} \| \LP_{N} u \|_{L^2( \R^d )},
\end{align*}
and
\begin{align*}
\big\| e^{- i s \Lap} \LP_{N} \UP_{e_l}^{\Lap} u \big\|_{L_{e_l}^{\infty,  \frac{2}{1 - \epsilon}} ( I \times \R^d )}
 &\lesssim
 \begin{ltae}
 \big\| e^{- i s \Lap} \LP_{N} \UP_{e_l}^{\Lap} u   \big\|_{L_{e_l}^{\infty, 2} ( I \times \R^d )}^{1 -  \epsilon}
\\ \quad\times\big\| e^{- i s \Lap} \LP_{N} \UP_{e_l}^{\Lap} u  \big\|_{L_{e_l}^{\infty, \infty} ( I \times \R^d)}^{\epsilon}
 \end{ltae}
 \\
&\lesssim N^{- \frac{\sigma - 1}{2} + | O (\epsilon) |} \big\| \LP_{N} \UP_{e_l}^{\Lap} u \big\|_{L^2 ( \R^d )} .
\end{align*}
In summary, for $I = J$
\begin{equation*}
\big\| e^{- i s \Lap} \LP_{N} u \big\|_{X^{\epsilon}_N (J)} \lesssim N^{ | O (\epsilon) |} \| \LP_{N} u \|_{L^2( \R^d )}
\end{equation*}
and by duality for $I = J'$
\begin{equation*}
\Big\| \int_{J'} e^{ i s \Lap} \LP_{N} v ds \Big\|_{L^2(\R^d)} \lesssim  N^{ | O (\epsilon) |} \| \LP_{N} v \|_{X^{\epsilon}_N (J')*} \,.
\end{equation*}
Consequently,
\[
\Big\| \int_{J'} e^{- i (t - s) \Lap} \LP_{N} h (s) \dd s \Big\|_{X^{\epsilon}_N (J)}
\begin{ltae}
\lesssim N^{ | O (\epsilon) |} \Big\| \int_{J'} e^{ i s \Lap} \LP_{N} h (s) \dd s \Big\|_{L^2(\R^d)}
\\
\lesssim N^{ | O (\epsilon) |} \| \LP_{N} h \|_{X^{\epsilon}_N (J')*} \,,
\end{ltae}
\]
as desired.
\end{proof*}

\begin{proof*}{Proof of \Cref{lem:linear-evolution-christ-kiselev}}
 Let $v_1, v_2 \st I \times \R^d \rightarrow \C$ are 
 supported on disjoint time intervals $J_1, J_2 \subset I$, then we claim that there are norms
 $\tilde{X}^{\epsilon}_N (I)$ equivalent to $X^{\epsilon}_N (I)$ with
 some constant independent of $N$, such that
  \begin{equation}\label{eq:X-concat-sim}
  \big\| v_1 + v_2 \big\|_{\tilde{X}^{\epsilon}_N (I)}^{\frac{2}{1 -  \epsilon}} \leq \| v_1 \|_{\tilde{X}^{\epsilon}_N (J_1)}^{\frac{2}{1- \epsilon}} + \| v_2 \|_{\tilde{X}^{\epsilon}_N (J_2)}^{\frac{2}{1- \epsilon}}
    \end{equation}
  and if $h_1, h_2 \st I \times \R^d \rightarrow \C$ are 
 supported on disjoint time intervals $J_1, J_2
 \subset I$, then
  \begin{equation}\label{eq:Xstar-concat-sim}
    \big\| h_1 + h_2 \big\|_{\tilde{X}^{\epsilon}_N (I)^{\ast}}^{\frac{2}{1 +
    \epsilon}} \geq \| h_1 \|_{\tilde{X}^{\epsilon}_N (J_1)^{\ast}}^{\frac{2}{1 + \epsilon}} + \| h_2 \|_{\tilde{X}^{\epsilon}_N (J_2)^{\ast}}^{\frac{2}{1 + \epsilon}}\,,
      \end{equation}
  where the norm $\tilde{X}^{\epsilon}_N (I)^{\ast}$ is the dual norm to
  $\tilde{X}^{\epsilon}_N (I)$ (cf \eqref{eq:defn:X-norm-dual}). 
Consequently, by induction
\begin{equation}\label{eq:X-concat}
\Big\| \sum_i v_i  \Big\|_{\tilde{X}^{\epsilon}_N (I)}^{\frac{2}{1 -\epsilon}}
\leq
\sum_i \| v_i \|_{\tilde{X}^{\epsilon}_N (J_1)}^{\frac{2}{1 - \epsilon}} 
\end{equation}
and
\begin{equation}\label{eq:Xstar-concat}
\Big\| \sum_i h_i \Big\|_{\tilde{X}^{\epsilon}_N (I)^{\ast}}^{\frac{2}{1 + \epsilon}} \geq \sum_i  \| h_i \|_{\tilde{X}^{\epsilon}_N (J_1)^{\ast}}^{\frac{2}{1 + \epsilon}} \,.
\end{equation}

Fix $h \st I \times \R^d \rightarrow \C$ and without loss of generality suppose
$I = [0, T_0]$. Then, there is a sequence of interval
$\big\{ I^n_k = [t^n_k, t^n_{k + 1}) \big\}_{n \in \N, k \in \{ 0, \ldots, 2^n - 1 \}}$
such that $t_k^n \leq t^n_{k + 1}$, the intervals
$\{ I^n_k \}_{k \in \{ 0, \ldots, 2^n - 1 \}}$ partition $I$ and
$I^n_k = I_{2 k}^{n + 1} \cup I_{2 k + 1}^{n + 1}$. Furthermore, the
intervals are constructed so that
\begin{equation}\label{eq:dgct}   
\| h  \|_{\tilde{X}^{\epsilon}_N (I^n_k)^{\ast}} \lesssim 2^{- \frac{1 + \epsilon}{2}n} \| h \|_{\tilde{X}^{\epsilon}_N (I)^{\ast}} . 
\end{equation}
We postpone the construction of such sequence till the end of the proof.

Using the constructed intervals,  we have for any $t \in I$
\[
\1_{[0, t)} (s) = \sum_{n = 1}^{\infty} {\sum^{2^{n }-1}_{k = 0}}  \1_{I_{ k}^n} (s) \1_{I_{ k + 1}^n} (t) 
\]
and by triangle inequality and \eqref{eq:X-concat} for any regular $(t, s, x) \mapsto F(t, s, x)$
\[ \begin{ltae}
\Big\| \int_0^t F(t, s) \dd s \Big\|_{\tilde{X}^{\epsilon}_N (I)}  = \Big\| \int_I \1_{[0, t]} (s) F(t, s) \dd s \Big\|_{\tilde{X}^{\epsilon}_N (I)}
\\ \qquad \begin{ltae}
= \Big\| \int_I \sum_{n = 1}^{\infty} \sum^{2^{n}-1}_{k = 0}  \1_{I_{ k}^n} (s) \1_{I_{ k + 1}^n} (t) F(t, s) \dd s \Big\|_{\tilde{X}^{\epsilon}_N (I)}
\\
\leq \sum_{n = 1}^{\infty} \Big\| \int_I \sum^{2^{n} - 1}_{k = 0} \1_{I_{ k}^n} (s) \1_{I_{ k + 1}^n} (t) F(t, s) \dd s \Big\|_{\tilde{X}^{\epsilon}_N (I)}
\\
\leq \sum_{n = 1}^{\infty} \Big( \sum^{2^{n} - 1}_{k = 0}   \Big\| \int_I \1_{I_{ k}^n} (s) \1_{I_{ k + 1}^n} (t) F(t, s) \dd s \Big\|_{\tilde{X}^{\epsilon}_N (I)}^{\frac{2}{1 - \epsilon}} \Big)^{\frac{1 - \epsilon}{2}}
\\
\lesssim \sum_{n = 1}^{\infty} \Big( {\sum^{2^{n} - 1}_{k = 0}}  \Big\| \int_{I_{ k}^n} F(t, s) \dd s \Big\|_{\tilde{X}^{\epsilon}_N (I_{ k + 1}^n)}^{\frac{2}{1 - \epsilon}} \Big)^{\frac{1 - \epsilon}{2}} \,,
\end{ltae}
\end{ltae}
\]
where we suppressed the dependence of $F$ on $x$. 
Then, since norms $X^{\epsilon}_N (I)$ and $\tilde{X}^{\epsilon}_N (I)$ are equivalent and 
$I_{ k}^n$ and $I_{ k + 1}^n$ are disjoint, it follows from  \Cref{lem:linear-evolution-disjoint-times} with 
$F(t, s) = e^{- i (t - s) \Lap} \LP_{N} h (s)$ and \eqref{eq:dgct} that
\[
\begin{ltae}
\Big\| \int_0^t e^{- i (t - s) \Lap} \LP_{N} h (s) \dd s  \Big\|_{X^{\epsilon}_N (I)}
\\\quad \begin{ltae}
\lesssim 
\sum_{n = 1}^{\infty} \Big( {\sum^{2^{n} - 1}_{k = 0}}  \Big\| \int_{I_{ k}^n} e^{- i (t - s) \Lap} \LP_{N} h (s) \dd s \Big\|_{X^{\epsilon}_N (I_{ k + 1}^n)}^{\frac{2}{1 - \epsilon}} \Big)^{\frac{1 - \epsilon}{2}}\\
\lesssim N^{|O(\epsilon)|} \sum_{n = 1}^{\infty} \Big( \sum^{2^{n} - 1}_{k = 0}( {2^{- \frac{1 + \epsilon}{2} n}}  )^{\frac{2}{1 - \epsilon}} \Big)^{\frac{1 - \epsilon}{2}} \| \LP_{N} h
\|_{X^{\epsilon}_N (I)^{\ast}}
\\
\lesssim N^{|O(\epsilon)|} \sum_{n = 1}^{\infty}  \big( {2^n}  2^{- \frac{1 + \epsilon}{1 - \epsilon} n} \big)^{\frac{1 - \epsilon}{2}} \| \LP_{N} h \|_{X^{\epsilon}_N (I)^{\ast}}
\\
\lesssim \| \LP_{N} h \|_{X^{\epsilon}_N (I)^{\ast}} \,.
\end{ltae}
\end{ltae}
\]
Next, let us prove  \eqref{eq:X-concat-sim}, and \eqref{eq:Xstar-concat-sim}. We claim that 
\eqref{eq:Xstar-concat-sim} follows from \eqref{eq:X-concat-sim} by duality. 
Indeed, fix $h_j$ supported on $J_j$, $j \in \{1, 2\}$, with $J_1 \cap J_2 = \emptyset$. Then, 
\[ \| h_1 + h_2 \|_{\tilde{X}^{\epsilon}_N (I)^{\ast}} \begin{ltae}
\geq \sup_{v_1, v_2} \Big( \frac{\int_{J_1 \times \R^d} h_1 v_1 + \int_{J_2 \times \R^d} h_2 v_2}{\| v_1 + v_2 \|_{\tilde{X}^{\epsilon}_N (I)}} \Big)
\\
\geq \sup_{v_1, v_2} \bigg( \frac{\int_{J_1 \times \R^d} h_1 v_1 + \int_{J_2 \times \R^d} h_2 v_2}{\big( \| v_1 \|_{\tilde{X}^{\epsilon}_N (J_1)}^{\frac{2}{1 - \epsilon}} + \| v_2 \|_{\tilde{X}^{\epsilon}_N (I)}^{\frac{2}{1 - \epsilon}} \big)^{\frac{1 - \epsilon}{2}}} \bigg)
\end{ltae} \]
where the supremum is taken over $v_1, v_2 \in \tilde{X}^{\epsilon}_N
(I)$ supported on $J_1$ and $J_2$ respectively. 
We choose $v_j$ which maximize
$ v \mapsto \Big|\int_{J_j \times \R^d} h_j v\Big|$ over $v$  
with $\|v\|_{\tilde{X}^{\epsilon}_N (J_j)} = \| h_j \|_{\tilde{X}^{\epsilon}_N (J_j)^{\ast}}^{\frac{1 - \epsilon}{1 + \epsilon}}$. By duality between the spaces $\tilde{X}^{\epsilon}_N (J_j)$ and
$\tilde{X}^{\epsilon}_N (J_j)^*$, this supremum is exactly
$\| h_1\|_{\tilde{X}^{\epsilon}_N (J_1)^{\ast}}^{\frac{2}{1 +\epsilon}}$, and therefore 
\[
\| h_1 + h_2 \|_{\tilde{X}^{\epsilon}_N (I)^{\ast}}
\begin{ltae}
\geq \frac{\| h_1\|_{\tilde{X}^{\epsilon}_N (J_1)^{\ast}}^{\frac{2}{1 +\epsilon}} + \| h_2 \|_{\tilde{X}^{\epsilon}_N (J_2)^{\ast}}^{\frac{2}{1 +\epsilon}}}{\Big( \| h_1 \|_{\tilde{X}^{\epsilon}_N (J_1)^{\ast}}^{\frac{2}{1 +\epsilon}} + \| h_2 \|_{\tilde{X}^{\epsilon}_N (J_2)^{\ast}}^{\frac{2}{1 +\epsilon}} \Big)^{\frac{1 - \epsilon}{2}}}
\\
= \Big( \| h_1 \|_{\tilde{X}^{\epsilon}_N (J_1)^{\ast}}^{\frac{2}{1 +\epsilon}} + \| h_2 \|_{\tilde{X}^{\epsilon}_N (J_2)^{\ast}}^{\frac{2}{1 +\epsilon}} \Big)^{\frac{1 + \epsilon}{2}} \,,
\end{ltae}
\]
as desired. To prove
\eqref{eq:X-concat-sim},  it suffices to set
\[ \|v\|_{\tilde{X}^{\epsilon}_N (I)}^{\frac{2}{1 - \epsilon}} \eqd
\begin{ltae}
\| v \|_{L_t^{\infty} L_x^{\frac{2}{1 - \epsilon}} (I \times\R^d)}^{\frac{2}{1 - \epsilon}} + \| v \|_{L_t^{2 \frac{d + \sigma}{d}} L_x^{2 \frac{d + \sigma}{d}} (I \times \R^d)}^{\frac{2}{1 - \epsilon}}\\
+ \sum_{l = 1}^d \big(N^{- \frac{d - 1}{2}} \| v \|_{L_{e_l}^{\frac{2}{1 - \epsilon}, \infty} (I \times \R^d)}\big)^{\frac{2}{1 - \epsilon}}\\
+ \sum_{l = 1}^d \big(N^{\frac{\sigma - 1}{2}} \big\| \UP_{e_l}^{\Lap} v\big\|_{L_{e_l}^{\infty, \frac{2}{1 - \epsilon}} (I \times \R^d)}\big)^{\frac{2}{1 - \epsilon}}
\end{ltae}
\]
   and note that norms of $X^{\epsilon}_N (I)$ and $\tilde{X}^{\epsilon}_N (I)$ are equivalent with constants independent of $N$, possibly
   depending on $\epsilon$.
  To show \eqref{eq:X-concat-sim} let us first focus on directional norms. 
  Note that, it suffices to prove the following, more general estimate for $p, q \in [1, \infty]$ and $r \leq \min\{p, q\}$:
\[
\| v_1 + v_2 \|_{L_{e_l}^{p, q} (I \times
\R^d)}^{r}  \leq \| v_1 \|_{L_{e_l}^{p, q} (I \times \R^d)}^{r} + \| v_2 \|_{L_{e_l}^{p, q} (I \times \R^d)}^{r} \,.
\]

Indeed, if $p \leq q$, then $r \leq p$, and since $v_1$ and $v_2$ have disjoint supports and $(a + b)^s \leq a^s + b^s$ if $s \in [0, 1]$, we have
\[
\| v_1 + v_2 \|_{L_{e_l}^{p, q} (I \times \R^d)}^{r}
\begin{ltae}
= \Big(\int_{\R} \big(\| v_1 (x_1) + v_2 (x_1) \|^{q}_{L^{q}_{t, x'} ( I \times \R^d )}\big)^{\frac{p}{q}} \dd x_1\Big)^{\frac{r}{p}}
\\
= \Big(\int_{\R}  \big( \| v_1 (x_1) \|^q_{L^{q}_{t, x'}} + \| v_2 (x_1)\|^q_{L^{q}_{t, x'}} \big)^{\frac{p}{q}}  \dd x_1\Big)^{\frac{r}{p}}
\\
\leq \Big( \| v_1 \|_{L_{e_l}^{p, q} (I \times \R^d)}^{p} + \| v_2 \|_{L_{e_l}^{p, q} (I \times \R^d)}^{p} \Big)^{\frac{r}{p}}
\\
\leq  \| v_1 \|_{L_{e_{l}}^{p, q} (I \times \R^d)}^{r} + \| v_2 \|_{L_{e_l}^{p, q} (I \times \R^d)}^{r} \,.
\end{ltae}
\]
If $p \geq q$, then $r \leq q$, and  by the triangle inequality
\[
\| v_1 + v_2 \|_{L_{e_l}^{p, q} (I \times \R^d)}^{r} \begin{ltae}
= \Big(\int_{\R}  \big( \| v_1 (x_1) \|^q_{L^{q}_{t, x'}} + \| v_2 (x_1)\|^q_{L^{q}_{t, x'}} \big)^{\frac{p}{q}}  \dd x_1\Big)^{\frac{r}{p}} 
\\
=   \big\| \| v_1 (x_1) \|^q_{L^{q}_{t,x'}} + \| v_2 (x_1)\|^q_{L^{q}_{t, x'}} \big\|_{L^{\frac{p}{q}}_{x_1}}^{\frac{r}{q}}  
\\
\leq 
\Big(\big\| \| v_1 (x_1) \|^q_{L^{q}_{t,x'}} \big\|_{L^{\frac{p}{q}}_{x_1}} + \big\| v_2 (x_1)\|^q_{L^{q}_{t, x'}} \big\|_{L^{\frac{p}{q}}_{x_1}} \Big)^{\frac{r}{q}}  
\\
=  \big( \| v_1 \|_{L_{e_l}^{p, q} (I \times \R^d)}^{q} + \| v_2 \|_{L_{e_l}^{p, q} (I \times \R^d)}^{q} \big)^{\frac{r}{q}}
\\
\leq  \| v_1 \|_{L_{e_l}^{p, q} (I \times \R^d)}^{r} + \| v_2 \|_{L_{e_l}^{p, q} (I \times \R^d)}^{r} \,.
\end{ltae}
\] 
For isotropic norms we have for $r \leq p$
\[
\| v_1 + v_2 \|_{L_{t}^p L_x^{q} (I \times \R^d)}^{r} \begin{ltae}
\leq \Big\| \| v_1 \|_{L_x^q} + \|v_2\|_{L_x^q} \Big\|_{L_{t}^p(I)}^{r}
\\
= \Big(\int_{I}   \| v_1 (t) \|^p_{L^{q}_{x}} + \| v_2 (t)\|^p_{L^{q}_{x}}   \dd t\Big)^{\frac{r}{p}}
\\
\leq   \Big(\int_{I}   \| v_1 (t) \|^p_{L^{q}_{x}} \dd t\Big)^{\frac{r}{p}} + \Big( \int_{I}\| v_2 (t)\|^p_{L^{q}_{x}}   \dd t\Big)^{\frac{r}{p}}
\\
=  \| v_1 \|_{L_t^p L_{x}^{q} (I \times \R^d)}^{r} + \| v_2 \|_{L_t^p L_{x}^{q} (I \times \R^d)}^{r} \,.
\end{ltae}
\]

Finally, let us inductively construct the dyadic grid. One can start by setting
$I^0_0 = I$. Assume that $I^n_k = [t_k^n, t_{k + 1}^n)$ was already constructed and set  
$t^{n + 1}_{2 k} = t^n_k$. 
Since 
$t \mapsto \| h_1\|_{\tilde{X}^{\epsilon}_N ([t^n_k, t])^{\ast}}^{\frac{2}{1 + \epsilon}}$ is  continuous, 
there is $t^{n + 1}_{2 k + 1} \in [t^n_k, t^n_{k + 1}]$ so that
\[
\| h_1 \|_{\tilde{X}^{\epsilon}_N ([t^n_k, t^{n + 1}_{2 k +1}])^{\ast}}^{\frac{2}{1 + \epsilon}} = \frac{1}{2} \| h_1\|_{\tilde{X}^{\epsilon}_N ([t^n_k, t^n_{k + 1}])^{\ast}}^{\frac{2}{1+ \epsilon}} .
\]
By \eqref{eq:Xstar-concat-sim} it holds that
\[
\| h_1 \|_{\tilde{X}^{\epsilon}_N ([t^{n + 1}_{2 k + 1}, t^n_{k +1}])^{\ast}}^{\frac{2}{1 + \epsilon}} \leq \| h_1\|_{\tilde{X}^{\epsilon}_N ([t^n_k, t^n_{k + 1}])^{\ast}}^{\frac{2}{1+ \epsilon}}  - \| h_1 \|_{\tilde{X}^{\epsilon}_N ([t^n_k, t^{n +1}_{2 k + 1}])^{\ast}}^{\frac{2}{1 + \epsilon}} = \frac{1}{2} \| h_1\|_{\tilde{X}^{\epsilon}_N ([t^n_k, t^n_{k + 1}])^{\ast}}^{\frac{2}{1 + \epsilon}},
\]
and the iterations yield required required.
\end{proof*}

\begin{proof*}{Proof of \Cref{prop:main-linear-estimate}}
The solution $v$ is given by the Duhamel formula \eqref{eq:duhamel}. The
first claimed bound is the statement of
\Cref{lem:linear-evolution-christ-kiselev}. The latter bound follows by
adding up the estimates over $N \in 2^{\N}$ as per \eqref{eq:defn:X-norm}.
\end{proof*}

\section{Probabilistic Estimates}\label{sec:probabilistic}

The aim in this subsection is to establish an almost sure bound of the free
evolution with random initial data. The solution 
$e^{i t \Lap} f^{\omega}$ is measured in a Besov -type norm $Y^{S, \epsilon}
(I)$ with regularity index $S > 0$ (see 
\eqref{eq:defn:Y-norm} below), which as the norm $X^{\xs,
\epsilon} (I)$, contains components of Strichartz -type, directional
maximal components (see \Cref{lem:dir-maximal-unit-scale}), and directional
local smoothing (see \Cref{lem:dir-local-smoothing}) components. However,
because of the random nature of $f^{\omega}$ we prove a gain of derivatives
in the estimates compared to the  deterministic counterparts
In fact, we do not have any
loss of derivatives for the Strichartz -type norms with high spatial
integrability, and the directional maximal component shows a loss of only
$\frac{\sigma - 1}{2}$ derivatives compared to the $\frac{d - 1}{2}$
derivatives for deterministic evolution in the norm 
$X^{\xs,\epsilon} (I)$.

For $\epsilon \in [0, 1)$ we set
\begin{equation}\label{eq:defn:Y-norm}
\begin{ltae}
\|F\|_{Y^{S, \epsilon} (I)} \eqd \Big( \sum_{N \in 2^{\N}} N^{2 S} \|\LP_{N} F\|^2_{Y^{\epsilon}_N (I)} \Big)^{\frac{1}{2}},
\\
\|F\|_{Y^{\epsilon}_N (I)} \eqd \begin{ltae}
\| F \|_{L_t^{\infty} L_x^2 (I \times \R^d)} + \| F \|_{L_t^{2 \frac{d + \sigma}{\sigma}} L_x^{2 \frac{d + \sigma}{\sigma}} (I \times \R^d)}
\\
+ \sum_{l = 1}^d N^{- \frac{\sigma - 1}{2}} \| F \|_{L_{e_l}^{2, \frac{2}{\epsilon}} (I \times \R^d)}
\\
+ \sum_{l = 1}^d N^{\frac{\sigma - 1}{2}} \| \UP_{e_l}^{\Lap} F\|_{L_{e_l}^{\frac{2}{\epsilon}, 2} (I \times \R^d)} .
\end{ltae}
\end{ltae} 
\end{equation}
The main result of this section is the following.

\begin{proposition}\label{prop:Y-bounds}
Let $S > 0$ and $\epsilon \in (0, \frac{1}{2})$.
Given $f \in H^{S + | O (\epsilon) |} (\R^d)$ denote by $f^{\omega}$ the
randomization \eqref{eq:randomization} of $f$. There exist constants $C > 0$
and $c > 0$ such that for any $\lambda > 0$ it holds that
\[
\mathbb{P} \Big( \Big\{\omega \in \Omega : \big\|e^{- i t \Lap} f^{\omega} \big \|_{Y^{S, \epsilon} (I)} > \lambda\Big\} \Big) \leq C e^{- \frac{c \lambda^2}{\|f\|_{H_x^{S + | O (\epsilon) |} (\R^d)}^2}} . \]
In particular, almost surely we have
\[ 
\|e^{- i t \Lap} f^{\omega} \|_{Y^{S, \epsilon} (I)} < \infty\, . 
\]
\end{proposition}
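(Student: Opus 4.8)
The plan is to reduce the probabilistic estimate to a deterministic one for each dyadic block by exploiting the independence and decay of the moments of the Gaussians $(g_k)$. First I would recall the standard large deviation / Khintchine-type bound: for i.i.d.\ zero-mean random variables $(g_k)$ with finite moments of all orders, and for any Banach space $B$ of functions on $I\times\R^d$ whose norm is given by an $L^r_{t,x}$-type (or mixed-norm) integral with $r<\infty$, one has the tail bound
\[
\mathbb{P}\Big(\big\|\sum_k g_k(\omega) G_k\big\|_{B}>\lambda\Big)\le C\exp\Big(-\frac{c\lambda^2}{\big\|(\sum_k |G_k|^2)^{1/2}\big\|_{B}^2}\Big)
\]
for deterministic $G_k$; this follows from the Minkowski inequality to pull the $L^p_\omega$ norm inside the spatial integral, Khintchine in $\omega$, and Chebyshev in $p$ optimized over $p$. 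I would state this as a lemma (it is alluded to in the sentence ``we also recall several properties of sums of Gaussian random variables''), noting the norms appearing in $Y^\epsilon_N$ — namely $L^\infty_tL^2_x$, $L^{2(d+\sigma)/\sigma}_{t,x}$, $L^{2,2/\epsilon}_{e_l}$, $L^{2/\epsilon,2}_{e_l}$ — are all of this form once one is careful with the $L^\infty_t$ component (handled by a standard argument: replace $L^\infty_t$ by $L^{p_0}_t$ for large finite $p_0$ using the extra smoothing room hidden in $|O(\epsilon)|$, or use a Sobolev embedding in $t$ together with the equation).

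Second, I would apply this lemma with $G_k = e^{-it\Lap}\LP_N\QP_k f$ and $B=Y^\epsilon_N(I)$. The square function $\big(\sum_k |e^{-it\Lap}\LP_N\QP_k f|^2\big)^{1/2}$ needs to be controlled in $Y^\epsilon_N$. Here is where the deterministic estimates of Section~\ref{sec:generalities} enter: for each fixed $k$, $\QP_k f$ is frequency-localized to a unit ball, so by \Cref{lem:dir-maximal-unit-scale} (the unit-scale analogue of \Cref{lem:dir-maximal}, which gains back the full $\frac{d-1}{2}-\frac{\sigma-1}{2}$ because the frequency support has diameter $\approx 1$ rather than $\approx N$), by the unit-scale Strichartz estimate (\eqref{eq:strichartze1} with $f$ replaced by $\QP_k f$, improved in integrability via Bernstein \eqref{eq:unit-scale-bernstein}), and by \Cref{lem:dir-local-smoothing} applied to $\QP_k f$, one gets
\[
\big\|e^{-it\Lap}\LP_N\QP_k f\big\|_{Y^\epsilon_N(I)}\lesssim \|\LP_N\QP_k f\|_{L^2_x(\R^d)}.
\]
Then I would bound the square function: since the spatial integrability exponents in $Y^\epsilon_N$ are $\ge 2$, Minkowski's inequality lets one exchange the $\ell^2_k$ sum with the $Y^\epsilon_N$ norm (the norm of an $\ell^2_k$-valued function dominates the $\ell^2_k$-norm of the individual norms is the wrong direction — one needs the Minkowski inequality $\|(\sum_k \|\cdot_k\|^2)^{1/2}\| \le (\sum_k \|\cdot_k\|^2)^{1/2}$ wait, that is an equality only for $L^2$; for general $L^{r\ge 2}$ one has $\|(\sum_k|f_k|^2)^{1/2}\|_{L^r}\le (\sum_k\|f_k\|_{L^r}^2)^{1/2}$), giving
\[
\Big\|\big(\textstyle\sum_k |e^{-it\Lap}\LP_N\QP_k f|^2\big)^{1/2}\Big\|_{Y^\epsilon_N(I)}\lesssim\Big(\sum_k \|\LP_N\QP_k f\|_{L^2_x}^2\Big)^{1/2}\approx\|\LP_N f\|_{L^2_x}.
\]
Combining with the lemma gives, for each $N$, the tail bound $\mathbb{P}(\|\LP_N e^{-it\Lap}f^\omega\|_{Y^\epsilon_N}>\mu)\le Ce^{-c\mu^2/\|\LP_N f\|_{L^2}^2}$, equivalently $\|\LP_N e^{-it\Lap}f^\omega\|_{Y^\epsilon_N}\lesssim \|g\|_{L^p_\omega}$-type moment bounds growing like $\sqrt p\,\|\LP_N f\|_{L^2}$.

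Third, I would sum over dyadic blocks $N$. Taking $L^p_\omega$ norms (for fixed large $p$) of the full $Y^{S,\epsilon}$ norm, using Minkowski in $\omega$ versus the $\ell^2_N$ sum (again legitimate since $p\ge 2$), yields
\[
\big\|\,\|e^{-it\Lap}f^\omega\|_{Y^{S,\epsilon}(I)}\big\|_{L^p_\omega}\lesssim \sqrt p\,\Big(\sum_N N^{2S}\|\LP_N f\|_{L^2_x}^2\Big)^{1/2}= \sqrt p\,\|f\|_{H^S_x}.
\]
At this stage one loses $N^{|O(\epsilon)|}$ when passing between $L^\infty_t$ and $L^{p_0}_t$ and in the interpolations used to land in the correct mixed-norm spaces with exponents involving $2/\epsilon$; these losses are absorbed by replacing $\|f\|_{H^S_x}$ with $\|f\|_{H^{S+|O(\epsilon)|}_x}$, exactly as in the statement. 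Finally, the uniform-in-$p$ growth $\sqrt p$ of the $L^p_\omega$ norm is equivalent, by the standard Chebyshev argument (optimizing $p$), to the sub-Gaussian tail bound $\mathbb{P}(\|e^{-it\Lap}f^\omega\|_{Y^{S,\epsilon}}>\lambda)\le Ce^{-c\lambda^2/\|f\|_{H^{S+|O(\epsilon)|}_x}^2}$; almost-sure finiteness follows by Borel--Cantelli (or simply because a random variable with a finite exponential moment is a.s.\ finite).

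The main obstacle I anticipate is the treatment of the $L^\infty_t$ endpoint in the $Y^\epsilon_N$ norm: the large-deviation lemma genuinely requires a \emph{finite} exponent in every variable, so one cannot directly apply Khintchine to $\|\cdot\|_{L^\infty_t L^2_x}$. The fix is standard but must be done carefully: either (i) use that on the unit-length time interval $I$, and for a fixed dyadic frequency $N$, one can trade $L^\infty_t$ for $L^{p_0}_t$ at the cost of $N^{|O(\epsilon)|}$-type (or $|I|^{-1/p_0}$) factors using the boundedness of $\partial_t e^{-it\Lap}\LP_N$ and a Sobolev embedding $W^{1/p_0+,p_0}_t\hookrightarrow L^\infty_t$, or (ii) pass through a Besov space in time. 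A second, more routine, technical point is checking that the unit-scale maximal estimate \Cref{lem:dir-maximal-unit-scale} (with the improved power $N^{-(\sigma-1)/2}$ appearing in $Y^\epsilon_N$ rather than $N^{-(d-1)/2}$) indeed holds — this is where the Bernstein gain from the bounded-diameter frequency support of $\QP_k f$ is crucial and must be inserted into the $TT^*$ argument of \Cref{lem:dir-maximal}.
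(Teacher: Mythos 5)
Your proposal is substantially the same as the paper's proof: Minkowski in $\omega$ to reduce to one dyadic block, a Khintchine-type moment bound (the paper's \Cref{lem:proba1}) applied component-by-component in the $Y^\epsilon_N$ norm, unit-scale deterministic estimates (Strichartz plus Bernstein, \Cref{lem:dir-maximal-unit-scale}, \Cref{lem:dir-local-smoothing}, and interpolation) to control the resulting square functions, $\ell^2$-summation over $N$, and finally conversion of the moment growth $\sim\sqrt{\gamma}$ into a Gaussian tail (the paper's \Cref{lem:proba2}). The $N^{|O(\epsilon)|}$ losses are absorbed into the $H^{S+|O(\epsilon)|}$ norm exactly as you say.

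The one place you over-engineer is the $L^\infty_t L^2_x$ endpoint. You correctly note that Khintchine/Minkowski cannot be pushed through a sup in time, and you propose trading $L^\infty_t$ for a finite $L^{p_0}_t$ via a Sobolev or Besov embedding in $t$. The paper avoids this entirely: because the pair $(\infty,2)$ is admissible (equivalently, because $e^{-it\Lap}$ is unitary on $L^2_x$), one has for every fixed $\omega$
\[
\big\|\LP_N e^{-it\Lap}f^\omega\big\|_{L^\infty_t L^2_x(I\times\R^d)} \le \big\|\LP_N f^\omega\big\|_{L^2_x(\R^d)}\,,
\]
which removes the time variable before any randomness is used. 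After that one applies Minkowski/Khintchine purely in $x$ (where the exponent is $2<\infty$), giving $\lesssim\sqrt{\gamma}\,\|\LP_N f\|_{L^2_x}$. Your proposed fix would work but costs extra derivative loss and machinery for nothing. The rest of the argument — including the Minkowski step $\|(\sum_k|f_k|^2)^{1/2}\|_{L^r}\le(\sum_k\|f_k\|_{L^r}^2)^{1/2}$ for $r\ge 2$ to bound the square functions and the observation that the unit-scale maximal estimate supplies the key gain from $N^{(d-1)/2}$ to $N^{(\sigma-1)/2}$ — matches the paper.
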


The proof of the above statement is provided at the end of this section. We
begin with a version of \Cref{lem:dir-maximal} for unit
frequency scale operators $\QP_{n}$ defined in
\eqref{eq:proj-unit-scale}, where we already prove the gain of 
regularity. The proof is based on a maximal function estimate
for unit-scale frequency localized data. Note that the improvement 
in the local smoothing estimate \Cref{lem:dir-local-smoothing}
is not expected. 

\begin{lemma}\label{lem:dir-maximal-unit-scale}
Let $d > \sigma \geq 2$, let $\Lap$
satisfy \eqref{eq:symbol-bddness} and \eqref{eq:symbol-curvature}. Then for
all $l \in \{ 1, \ldots, d \}$ and all $n \in \Z^{d}$ we have
\begin{equation}\label{eq:dir-maximal-unit-scale}
\langle n\rangle^{- \frac{\sigma - 1}{2}} \Big\| e^{- i t \Lap} \QP_{n} f    \Big\|_{L_{e_l}^{2, \infty} (I \times \R^d)} \lesssim \|\QP_{n} f\|_{L_x^2 (\R^d)} .
\end{equation}
\end{lemma}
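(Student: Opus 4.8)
The plan is to run the $TT^{\ast}$ scheme of the proof of \Cref{lem:dir-maximal}, the only new ingredient being that the Fourier support of $\QP_{n}f$ is a unit-scale cube near $n$ rather than a dyadic annulus of radius $N$. First I would fix a smooth bump $\tilde\psi$ with $\tilde\psi\equiv1$ on $\spt\psi$ and $\spt\tilde\psi\subset B_{2}$, and set
\[
Tf(t,x)\eqd\int_{\R^{d}}e^{2\pi i\xi\cdot x-it\Lap(\xi)}\,\tilde\psi(\xi-n)\,\FT f(\xi)\dd\xi ,
\]
so that $\tilde\psi(\xi-n)\psi(\xi-n)=\psi(\xi-n)$ and hence $e^{-it\Lap}\QP_{n}f=T(\QP_{n}f)$. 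As in \Cref{lem:dir-maximal}, by duality between $L^{2,\infty}_{e_{l}}$ and $L^{2,1}_{e_{l}}$ together with $\|T^{\ast}g\|_{L^{2}_{x}}^{2}=\langle TT^{\ast}g,g\rangle$, it suffices to prove $\|TT^{\ast}g\|_{L^{2,\infty}_{e_{l}}(I\times\R^{d})}\lesssim\langle n\rangle^{\sigma-1}\,\|g\|_{L^{2,1}_{e_{l}}(I\times\R^{d})}$. Since $TT^{\ast}$ is convolution in $(t,x)$ against
\[
K_{n}(t,x)\eqd\frac{1}{(2\pi)^{2d}}\int_{\R^{d}}e^{2\pi i\xi\cdot x-it\Lap(\xi)}\,\tilde\psi^{2}(\xi-n)\dd\xi ,
\]
and only time-differences in $I-I\subset(-T_{0},T_{0})$ occur, Young's convolution inequality (applied to $\1_{\{|t|<T_{0}\}}K_{n}$, which is legitimate since the convolution only involves $|t-s|<T_{0}$) reduces matters to
\[
\int_{\R}\sup_{|t|<T_{0},\,x'\in\R^{d-1}}\big|K_{n}(t,x_{l}e_{l}+x')\big|\dd x_{l}\ \lesssim\ \langle n\rangle^{\sigma-1}.
\]

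Two bounds on $K_{n}$ suffice. Since $\tilde\psi^{2}(\cdot-n)$ is supported on a set of measure $O(1)$, trivially $|K_{n}(t,x)|\lesssim1$. For the decay in $x_{l}$ I would next fix $N_{0}$ (depending on $\Lap$) so large that \eqref{eq:symbol-bddness} holds on $\spt\tilde\psi^{2}(\cdot-n)$ whenever $\langle n\rangle>N_{0}$; the finitely many remaining $n$ are handled directly using smoothness of $\Lap$ on the fixed compact set $\{|\xi|\le N_{0}+2\}$. For $\langle n\rangle>N_{0}$, \eqref{eq:symbol-bddness} gives $|\partial_{\xi_{l}}^{j}\Lap(\xi)|\lesssim\langle n\rangle^{\sigma-j}$ on the support for $j\le3$ (here $\lfloor d/\sigma\rfloor+2\ge3$ because $d>\sigma$), so whenever $|x_{l}|\gtrsim\langle n\rangle^{\sigma-1}|t|$ the phase satisfies $|\partial_{\xi_{l}}(2\pi x\cdot\xi-t\Lap(\xi))|=|2\pi x_{l}-t\,\partial_{\xi_{l}}\Lap(\xi)|\gtrsim|x_{l}|$. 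Integrating by parts twice in $\xi_{l}$ and feeding in the bounds on $\partial_{\xi_{l}}\Lap,\ \partial^{2}_{\xi_{l}}\Lap,\ \partial^{3}_{\xi_{l}}\Lap$ together with $|x_{l}|\gtrsim\langle n\rangle^{\sigma-1}|t|$, every resulting term is $\lesssim|x_{l}|^{-2}$ (uniformly in $\xi'$ and hence, after integrating in $\xi'$, uniformly in $x'$), giving $|K_{n}(t,x)|\lesssim|x_{l}|^{-2}$ in this regime.

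Combining the two bounds finishes the argument: for $|x_{l}|\gtrsim T_{0}\langle n\rangle^{\sigma-1}$ one has $|x_{l}|\gtrsim|t|\langle n\rangle^{\sigma-1}$ for every $|t|<T_{0}$, so $\sup_{|t|<T_{0},x'}|K_{n}|\lesssim|x_{l}|^{-2}$, while for $|x_{l}|\lesssim T_{0}\langle n\rangle^{\sigma-1}$ I simply use $\sup|K_{n}|\lesssim1$; hence
\[
\int_{\R}\sup_{|t|<T_{0},x'}\big|K_{n}(t,x_{l}e_{l}+x')\big|\dd x_{l}\ \lesssim\ \int_{|x_{l}|\lesssim T_{0}\langle n\rangle^{\sigma-1}}1\,\dd x_{l}\ +\ \int_{|x_{l}|\gtrsim T_{0}\langle n\rangle^{\sigma-1}}|x_{l}|^{-2}\,\dd x_{l}\ \lesssim\ \langle n\rangle^{\sigma-1},
\]
which is exactly the required estimate.

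The step I expect to demand the most care is the bookkeeping of the $t$-dependence: one must ensure that the nonstationary region $|x_{l}|\gtrsim\langle n\rangle^{\sigma-1}|t|$ is entered uniformly over all $|t|<T_{0}$ — this is the only place where the hypothesis $|I|\le T_{0}$ is used — and verify that each integration-by-parts error term (carrying factors of $|t|$, $\langle n\rangle^{\sigma-2}$, $\langle n\rangle^{\sigma-3}$) is genuinely controlled by $|x_{l}|^{-2}$ there. By contrast with \Cref{lem:dir-maximal}, no true dispersive decay estimate, and hence no curvature hypothesis \eqref{eq:symbol-curvature}, is needed: the trivial bound $|K_{n}|\lesssim1$ on a window of length $\sim\langle n\rangle^{\sigma-1}$ already produces the gain, which is exactly why the loss here is $\langle n\rangle^{\sigma-1}$ rather than the $N^{d-1}$ of the dyadic estimate.
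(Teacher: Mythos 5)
Your proposal is correct, and it follows the same $TT^{\ast}$ skeleton as the paper's proof of \Cref{lem:dir-maximal-unit-scale}: reduce to $\|K_n\|_{L^{1,\infty}_{e_l}}\lesssim\langle n\rangle^{\sigma-1}$, use $|K_n|\lesssim 1$ on the stationary window and two integrations by parts in $\xi_l$ on the nonstationary region $|x_l|\gtrsim\langle n\rangle^{\sigma-1}|t|$. The difference is in how the stationary window is handled. The paper's proof controls the region $|x_1|\lesssim\langle n\rangle^{\sigma-1}|t|$ by invoking the pointwise dispersive bound ``as in \eqref{eq:flfe},'' arriving at $|K_n|\lesssim\min\{1,|n|^{(\sigma-1)d/\sigma}/|x_1|^{d/\sigma}\}$, which integrates to $\langle n\rangle^{\sigma-1}$ uniformly over all $t\in\R$. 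You instead restrict the convolution kernel to $|t-s|<T_0$ (legitimate since both $t,s\in I$) and on the window $|x_l|\lesssim T_0\langle n\rangle^{\sigma-1}$ you just use the trivial bound $|K_n|\lesssim 1$, which already integrates to $T_0\langle n\rangle^{\sigma-1}$. Both routes give the same estimate; yours avoids the oscillatory-integral decay input, and so does not use the curvature hypothesis \eqref{eq:symbol-curvature} for this Lemma --- a genuine (if small) simplification, made possible because the trivial bound on a unit frequency cube is $O(1)$ rather than $O(N^d)$ as in the dyadic case, where this cheap argument would fail. The piece that is not fully spelled out in your sketch --- explicitly verifying each of the four integration-by-parts error terms against $|x_l|^{-2}$ using the third-derivative bound $|\partial^{3}_{\xi_l}\Lap|\lesssim\langle n\rangle^{\sigma-3}$ --- is exactly the computation the paper does write out, and you correctly flag that $\lfloor d/\sigma\rfloor+2\ge 3$ guarantees the needed smoothness.
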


\begin{proof}
We mimic the proof of \Cref{lem:dir-maximal}, using a
$T T^{\ast}$ argument. Define
\[
\chi_n (\xi) \eqd \sum_{\substack{ k \in \Z^d\\ | k - n | \leq 10 d }} \psi (\xi - k)
\]
so that $\psi (\xi - n) \chi_n (\xi) = \psi (\xi - n)$. Analogously to
\eqref{eq:T-operator-max}, let
\[
T f (t, x) \eqd \int_{\R^d} e^{2 \pi i \xi x - i t \Lap (\xi)} \chi_n(\xi) \FT{f} (\xi) d \xi
\]
and as in \eqref{eq:T-operator-max}  we have
\[
\begin{ltae}
T T^{\ast} g (t, x) = \int_{\R \times \R^d} K_N (t - s, x - y) g (s, y) \dd s \dd y,
\\
K_n (t, x) \eqd \frac{1}{(2 \pi)^{2 d}} \int_{\R^d} e^{2 \pi i \xi\cdot x - i t \Lap (\xi)}
\chi_n^2 (\xi) \dd \xi,
\end{ltae}
\]
reducing our proof to showing that
\[
\| K_n \|_{L^{1, \infty}_{e_l} (\R \times \R^d)} \lesssim \langle n \rangle^{\sigma - 1}.
\]
Without loss of generality suppose $e_l = e_1$. Since $\chi_n$ is supported on a set of
measure of order 1 (instead of $N^d$ as was the case of
\Cref{lem:dir-maximal}) we have that
\[
| K_n (t, x) | \lesssim 1.
\]
The required bounds for $| n | \leq N_0$ for a large $N_0 \in 2^N$ are
obtained analogously to \Cref{lem:dir-maximal}. Let us concentrate on the case
$| n | > N_0$ for $N_0$ large enough so that \eqref{eq:symbol-bddness} and \eqref{eq:symbol-curvature} 
hold for $\xi \in \spt \chi_n$.
  
As in \eqref{eq:flfe} we obtain the desired result  when $| x_1 | \lesssim | n|^{\sigma - 1}|t|$:
\[
\Big\| \1_{|x_1 | \lesssim |n|^{\sigma - 1}|t|} K_n (t, x_1, x') \Big\|_{L^{1, \infty}_{e_l} (\R \times \R^d)} 
\begin{ltae}
\lesssim \Big\|\min \Big\{1, \frac{|n|^{(\sigma - 1)\frac{d}{\sigma}}}{|x_1|^{\frac{d}{\sigma}}}\Big\}\Big\|_{L^{1,\infty}_{e_l} (\R \times \R^d)}
\\
\lesssim\langle n \rangle^{\sigma - 1} .
\end{ltae}
\]
Next, we focus on the regime $| x_1 | \gtrsim | n |^{\sigma - 1}|t|$. For $\xi \in \spt(\chi_n)$,  
$| \xi | \approx |n|$, and by \eqref{eq:symbol-bddness} we have
$\big| \partial_{\xi_1} \Lap (\xi) \big| \lesssim | \xi |^{\sigma - 1} \approx | n  |^{\sigma - 1}$, and therefore  $\big| t \partial_{\xi_1}  \Lap (\xi) \big| \leq |t|| n |^{\sigma - 1} \lesssim | x_1 |$. 
Consequently, for the derivative of the phase we have
\[
|\partial_{\xi_1} \Big( 2 \pi (x_1 \xi_1 + x' \cdot \xi') - t \Lap (\xi)  \Big)| = |2 \pi x_1 - t \partial_{\xi_1}  \Lap (\xi)| \approx | x_1 | \,.
\]
Also, \eqref{eq:symbol-bddness} implies $| \partial_{\xi_1, \xi_1} \Lap (\xi) | \lesssim | \xi |^{\sigma - 2} \approx | n |^{\sigma - 2}$, and we deduce that
\[
\Big| \partial_{\xi_1} \Big( \frac{1}{2\pi x_1 - t \partial_{\xi_1} \Lap (\xi)} \Big) \Big|
=
\frac{\big| t \partial_{\xi_1, \xi_1}^2 \Lap (\xi) \big|}{\big| 2 \pi x_1 - t \partial_{\xi_1} \Lap (\xi) \big|^2} \lesssim \frac{|t| |n|^{\sigma - 2}}{|x_1 |^2}
\]
and similarly since $| \partial^3_{\xi_1, \xi_1, \xi_1} \Lap (\xi) | \lesssim | \xi |^{\sigma - 3} \approx | n |^{\sigma - 3}$ we deduce that
\[
\Big| \partial^2_{\xi_1, \xi_1} \Big( \frac{1}{2 \pi x_1 - t \partial_{\xi_1} \Lap (\xi)} \Big) \Big|
\lesssim
\frac{t | n |^{\sigma -  3}}{| x_1 |^2} + \frac{t^2 | n |^{2 \sigma - 4}}{| x_1 |^3} .
\]
Combining these estimates with $|x_1| \gtrsim |n|^{\sigma - 1}|t|$, $|n| \gg 1$, we obtain
\[
\begin{ltae}
\Big| \partial_{\xi_1} \Big( \frac{1}{2 \pi x_1 - t \partial_{\xi_1} \Lap (\xi)} \partial_{\xi_1} \big( \frac{\chi_N^2 (\xi)}{2 \pi x_1 - t  \partial_{\xi_1} \Lap (\xi)} \big) \Big) \Big|
\\ \qquad
\lesssim \frac{1}{| x_1 |^2} + \frac{|t| |n|^{\sigma - 2}}{|x_1  |^3} + \frac{t | n |^{\sigma - 3}}{| x_1 |^3} + \frac{t^2 | n |^{2 \sigma - 4}}{| x_1 |^4}
\\ \qquad
\lesssim \frac{1}{| x_1 |^2} \Big( 1 + \frac{|t||n|^{\sigma - 2}}{|x_1 |} + \frac{|t| | n |^{\sigma - 3}}{| x_1 |} + \frac{t^2 | n |^{2\sigma - 4}}{| x_1 |^2} \Big)
\lesssim \frac{1}{| x_1 |^2} \chi_N^2 (\xi) \,.
\end{ltae}
\]
And integration in $\xi$ gives
\[
\1_{| x_1 | \gtrsim | n |^{\sigma - 1}|t|} | K_n (t, x_1, x') | \lesssim |x_1 |^{- 2} .
\]
Combined with the fact that $| K_n (t, x_1, x') | < 1$ we obtain that
\[
\1_{| x_1 | \gtrsim | n |^{\sigma - 1}|t|} | K_n (t, x_1, x') | \lesssim\langle x_1 \rangle^{- 2}
\]
and thus
\[
\Big\| \1_{| x_1 | \gtrsim | n |^{\sigma - 1}|t|} K_n (t, x_1, x') \Big\|_{L^{1, \infty}_{e_1} (\R \times \R^d)} \lesssim 1.
\]
This completes the required assertion.
\end{proof}

The improved unit scale directional maximal bounds of
\Cref{lem:dir-maximal-unit-scale}, the directional local smoothing estimates
of \Cref{lem:dir-local-smoothing}, and the Strichartz estimates of
\Cref{lem:strichartz} together with the unit scale Bernstein inequality
\eqref{eq:unit-scale-bernstein}  provide us with almost sure boundedness of
$\big\| e^{- i t \Lap} f^{\omega} \big\|_{Y^{S, \epsilon} (I)}$ thanks
to the following large deviation estimates proved in {\cite[Lemma
3.1]{burqRandomDataCauchy2008a}}. For $\gamma > 0$,  we write $\|F\|_{L^{\gamma}_{\omega}}$ to
denote $(\mathbb{E}|F|^{\gamma})^{1 / \gamma}$. We remark that the following lemmas provide a bridge between the stochastic nature of the initial condition and deterministic estimates. 

\begin{lemma}
\label{lem:proba1}Let $\{g_n \}_{n = 1}^{\infty}$ be a sequence of real
valued, independent, zero mean, random variables associated with the
distributions $\{\mu_n \}_{n = 1}^{\infty}$ on a probability space $(\Omega,
\mf{A}, \mathbb{P})$. Assume that there exists $c > 0$ such that
\[ \Big| \int_{- \infty}^{\infty} e^{\gamma x} d \mu_n (x) \Big| \leq
e^{c \gamma^2}, \quad \text{for all } \gamma \in \R  \text{ and all } n \in \N . \]
Then, there exists $\alpha > 0$ such that for any $\lambda > 0$ and any
sequence $\{c_n \}_{n = 1}^{\infty} \in l^2 (\N ; \mathbb{C})$,
\[ \mathbb{P} \Big( \Big\{ \omega : \Big| \sum_{n = 1}^{\infty} c_n g_n
(\omega) \Big| > \lambda \Big\} \Big) \leq 2 e^{- \alpha
\frac{\lambda^2}{\sum_n |c_n |^2}} . \]
Hence, there exists $C > 0$ such that for $2 \leq \gamma < \infty$ and every
$\{c_n \}_{n = 1}^{\infty} \in l^2 (\N ; \mathbb{C})$
\[ \Big\| \sum_{n = 1}^{\infty} c_n g_n (\omega)
\Big\|_{L_{\omega}^{\gamma} (\Omega)} \leq C \sqrt{\gamma} \Big(
\sum_{n = 1}^{\infty} |c_n |^2 \Big)^{\frac{1}{2}} . \]
\end{lemma}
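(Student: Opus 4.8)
The plan is to establish the exponential tail bound by a Chernoff-type (Bernstein) argument and then deduce the $L^\gamma_\omega$ estimate by integrating the tail. Write $\Sigma \eqd \sum_n |c_n|^2$; by homogeneity we may assume $\Sigma = 1$. The first step is a double reduction. Splitting $c_n = a_n + i b_n$ and using
\[
\Big\{ \omega : \Big| \sum_n c_n g_n(\omega) \Big| > \lambda \Big\} \subseteq \Big\{ \Big| \sum_n a_n g_n \Big| > \tfrac{\lambda}{2} \Big\} \cup \Big\{ \Big| \sum_n b_n g_n \Big| > \tfrac{\lambda}{2} \Big\},
\]
it suffices to bound $\mathbb{P}\big( |\sum_n a_n g_n| > \lambda \big)$ for \emph{real} coefficients with $\sum_n a_n^2 \le 1$, the loss of the factor $2$ in $\lambda$ being absorbed into $\alpha$. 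Moreover, since the exponential-moment hypothesis forces $\mathbb{E}[g_n^2] \le 2c$, the partial sums $S_N \eqd \sum_{n \le N} a_n g_n$ form an $L^2$-bounded martingale and hence converge almost surely; so it is enough to prove the tail bound uniformly in $N$ for finite sums and then pass to the limit (apply Fatou to $\liminf_N \{ |S_N| > \lambda - \delta \}$ and let $\delta \downarrow 0$).

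For a finite real sum $S = \sum_{n=1}^N a_n g_n$ with $\sum_n a_n^2 \le 1$, I would apply Markov's inequality to $e^{tS}$ with $t > 0$, use independence, and invoke the hypothesis with $\gamma = t a_n$:
\[
\mathbb{P}(S > \lambda) \le e^{-t\lambda} \, \mathbb{E}\big[ e^{tS} \big] = e^{-t\lambda} \prod_{n=1}^N \mathbb{E}\big[ e^{t a_n g_n} \big] \le e^{-t\lambda} \prod_{n=1}^N e^{c t^2 a_n^2} \le e^{-t\lambda + c t^2}.
\]
Optimizing, i.e. taking $t = \lambda/(2c)$, gives $\mathbb{P}(S > \lambda) \le e^{-\lambda^2/(4c)}$, and the identical estimate for $-S$ (the hypothesis holds for all real arguments) yields $\mathbb{P}(|S| > \lambda) \le 2 e^{-\lambda^2/(4c)}$. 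Undoing the normalization and the real/imaginary splitting, this is the first claimed bound with $\alpha = 1/(16c)$ and constant $2$.

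To get the moment bound, I would use the layer-cake formula together with the tail bound just established: with $X = \sum_n c_n g_n$ and $\Sigma = \sum_n |c_n|^2$,
\[
\mathbb{E}|X|^\gamma = \gamma \int_0^\infty \lambda^{\gamma-1} \, \mathbb{P}(|X| > \lambda) \, \dd \lambda \le 2\gamma \int_0^\infty \lambda^{\gamma-1} e^{-\alpha \lambda^2 / \Sigma} \, \dd \lambda = \gamma \, (\Sigma/\alpha)^{\gamma/2} \, \Gamma\big( \tfrac{\gamma}{2} \big),
\]
where the last equality follows from the substitution $u = \alpha \lambda^2/\Sigma$. Taking $\gamma$-th roots and using Stirling's formula, $\gamma^{1/\gamma} \, \Gamma(\gamma/2)^{1/\gamma} \lesssim \sqrt{\gamma}$ for $\gamma \ge 2$, and therefore $\|X\|_{L^\gamma_\omega(\Omega)} \lesssim \sqrt{\gamma} \, \Sigma^{1/2}$, as desired.

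I expect the only genuinely delicate point to be the passage from finite to infinite sums — making sure the series converges and that the uniform tail bound survives the limit — which is handled by the $L^2$-martingale convergence and Fatou argument sketched in the first paragraph; everything else is the classical Bernstein/Khintchine scheme, and the statement is standard (see \cite{burqRandomDataCauchy2008a}).
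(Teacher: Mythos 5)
Your proof is correct and follows the standard Chernoff/Bernstein argument of Burq--Tzvetkov's Lemma 3.1, which the paper simply cites without reproving. The only small inaccuracy is that the union bound over real and imaginary parts yields a prefactor $4$ rather than $2$ in front of the exponential, which one must absorb by a further mild worsening of $\alpha$ (using $\mathbb{P}\le 1$ for $\lambda$ small); this is harmless and does not affect the argument.
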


The proof of the next lemma is a slight modification of the proof in
{\cite[Lemma 4.5]{tzvetkovConstructionGibbsMeasure2009}}.

\begin{lemma}
\label{lem:proba2}Let $F$ be a real valued measurable function on a
probability space $(\Omega, \mf{A}, \mathbb{P})$. Suppose that there exists
$C_0 > 0$, $K > 0$, and $p_0 \geq 1$ such that for any $\gamma \geq
\gamma_0$ we have
\[ \|F\|_{L_{\omega}^{\gamma} (\Omega)} \leq \sqrt{\gamma} C_0 K. \]
Then, there exist $c > 0$ and $C_1 > 0$ depending on $C_0$ and $p_0$, but
independent of $K$, such that for every $\lambda > 0$,
\[ \mathbb{P} (\{\omega \in \Omega : |F (\omega) | > \lambda\}) \leq C_1
e^{- c \lambda^2 / K^2} . \]
In particular, we have
\[ \mathbb{P} (\{\omega \in \Omega : |F (\omega) | < \infty\}) = 1. \]
\end{lemma}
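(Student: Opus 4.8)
The plan is to run the classical Markov-inequality-plus-optimization argument that upgrades a family of moment bounds into a sub-Gaussian tail estimate; the exponent $\gamma_0$ in the hypothesis is read as $p_0$. First I would fix $\lambda > 0$ and, for an exponent $\gamma \geq p_0$ to be chosen later, apply the Markov inequality to $|F|^{\gamma}$:
\[
\mathbb{P}\big(\{\omega \in \Omega : |F(\omega)| > \lambda\}\big) \leq \lambda^{-\gamma}\, \mathbb{E}|F|^{\gamma} = \lambda^{-\gamma}\, \|F\|_{L^{\gamma}_{\omega}(\Omega)}^{\gamma} \leq \Big(\frac{\sqrt{\gamma}\, C_0 K}{\lambda}\Big)^{\gamma}.
\]
This already shows that the interesting regime is $\lambda \gtrsim K$, where the base of the power is strictly less than one, while for $\lambda$ of size comparable to $K$ or smaller a trivial bound will do.

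Next I would optimize in $\gamma$. Taking $\gamma \eqd \dfrac{\lambda^2}{e\, C_0^2 K^2}$ forces $\dfrac{\sqrt{\gamma}\, C_0 K}{\lambda} = e^{-1/2}$, so the right-hand side becomes $e^{-\gamma/2} = \exp\!\Big(-\dfrac{\lambda^2}{2e\, C_0^2 K^2}\Big)$; with $c \eqd \dfrac{1}{2e\, C_0^2}$, which depends only on $C_0$, this is precisely the asserted bound with constant $1$. This choice of $\gamma$ is admissible exactly when $\gamma \geq p_0$, i.e. when $\lambda \geq \lambda_0 \eqd \sqrt{e\, p_0}\, C_0 K$. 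For the complementary range $0 < \lambda < \lambda_0$ I would argue trivially: there $\mathbb{P}(\{|F| > \lambda\}) \leq 1$, whereas $e^{-c\lambda^2/K^2} \geq e^{-c\lambda_0^2/K^2} = e^{-p_0/2}$, so the inequality holds with $C_1 \eqd e^{p_0/2}$. Taking $C_1 \eqd \max\{1, e^{p_0/2}\}$ then yields the tail bound for all $\lambda > 0$, with $c$ and $C_1$ depending only on $C_0$ and $p_0$ and, crucially, not on $K$, as is transparent from the explicit formulas.

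Finally, the concluding assertion follows since $\{|F| = \infty\} \subseteq \{|F| > \lambda\}$ for every $\lambda > 0$, hence $\mathbb{P}(\{|F| = \infty\}) \leq \inf_{\lambda > 0} C_1 e^{-c\lambda^2/K^2} = 0$, so $\mathbb{P}(\{|F| < \infty\}) = 1$. There is no genuine obstacle in this argument; the only points requiring a little care are checking that the optimal $\gamma$ indeed satisfies $\gamma \geq p_0$ in the relevant range (handled by the trivial estimate for $\lambda < \lambda_0$) and verifying that the resulting constants $c$ and $C_1$ are independent of $K$, which the computation above makes explicit.
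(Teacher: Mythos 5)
Your proof is correct and is exactly the standard argument — Markov's inequality applied to $|F|^{\gamma}$ followed by optimization in $\gamma$, with the small-$\lambda$ range handled trivially. The paper omits a proof of this lemma, referring instead to Tzvetkov's Lemma~4.5, whose proof is this same computation; you also correctly identified the typo $\gamma_0$ for $p_0$ in the hypothesis.
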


\begin{proof*}{Proof of \Cref{prop:Y-bounds}}
We estimate each of the Littlewood-Paley pieces $Y_N^{\epsilon} (\R)$ of
$Y^{S, \epsilon} (\R)$ separately. 
By Minkowski's inequality, for any function $F$ and any $\gamma \geq 2$ it
holds that
\[
\Big\| \| F \|_{Y^{S, \epsilon} (I)} \Big\|_{L_{\omega}^{\gamma}} \begin{ltae}
= \bigg\|  \Big( \sum_{N \in 2^{\N}} N^{2 S} \| \LP_{N} F  \|_{Y_N^{\epsilon} (I)}^2 \Big)^{\frac{1}{2}}\bigg\|_{L_{\omega}^{\gamma}} \\
\leq \bigg( \sum_{N \in 2^{\N}} N^{2 S} \| \| \LP_{N} F  \|_{Y_N^{\epsilon} (I)} \|_{L_{\omega}^{\gamma}}^2 \bigg)^{\frac{1}{2}} \,.
\end{ltae}
\]
In the following we substitute $F = e^{- i t \Lap} f^{\omega}$.
First, since $(\infty, 2)$ is an admissible pair, by Lemma \ref{lem:strichartz}, Lemma \ref{lem:proba1},
and Jensen inequality
\[
\Big\|\| \LP_{N} e^{- i t \Lap} f^{\omega} \|_{L^{\infty}_t L^2_x (I \times \R^d)}\Big\|_{L^\gamma_\omega} \leq \Big\|\| \LP_{N} f^{\omega} \|_{L^2_x (\R^d)}\Big\|_{L^\gamma_\omega}
\lesssim \| \LP_{N} f \|_{L^2_x (\R^d)} .
\]
Next, we bound the moments of the $L_t^{2 \frac{d + \sigma}{\sigma}} L_x^{2
\frac{d + \sigma}{\sigma}}$  norm.
For any $\gamma > \frac{2}{\epsilon} > 2 \frac{d + \sigma}{\sigma}$, by
Minkowski's inequality and \Cref{lem:proba1}
\[ \begin{ltae}
\Big\| \big\| \LP_{N} e^{- i t \Lap} f^{\omega}\big\|_{L_t^{2 \frac{d + \sigma}{\sigma}} L_x^{2 \frac{d + \sigma}{\sigma}} ( \R \times \R^d)} \Big\|_{L_{\omega}^{\gamma}}
\\
\begin{ltae}
\leq {\bigg\| \Big\| \sum_{n \in \Z^d} g_n (\omega) e^{- i t \Lap} \LP_{N} \QP_{n} f \Big\|_{L_{\omega}^{\gamma}} \bigg\| }_{L_t^{2 \frac{d+ \sigma}{\sigma}} L_x^{2 \frac{d + \sigma}{\sigma}} ( \R \times\R^d )}
\\
\lesssim \sqrt{\gamma} \bigg\| \Big( \sum_{n \in \Z^d} \big| e^{- i t \Lap} \LP_{N} \QP_{n} f \big|^2\Big)^{\frac{1}{2}} \bigg\|_{L_t^{2 \frac{d + \sigma}{\sigma}} L_x^{2 \frac{d + \sigma}{\sigma}} (\R \times \R^d )}
\\
\lesssim \sqrt{\gamma} \bigg( \sum_{n \in \Z^d} \big\| e^{- i t \Lap} \LP_{N} \QP_{n} f  \big\|^2_{L_t^{2 \frac{d + \sigma}{\sigma}} L_x^{2\frac{d + \sigma}{\sigma}} (\R \times \R^d)} \bigg)^{\frac{1}{2}} \,.
\end{ltae}
\end{ltae} \]
Note that $\Big( 2 \frac{d + \sigma}{\sigma}, \frac{2 d (d + \sigma)}{d (d
+ \sigma) - \sigma^2} \Big)$ is a $\Lap$-admissible pair of exponents and
\[
2 \frac{d + \sigma}{\sigma} > \frac{2 d (d + \sigma)}{d (d + \sigma) - \sigma^2},
\]
so we can use the unit scale Bernstein estimate
\eqref{eq:unit-scale-bernstein} coupled with the Strichartz estimate
\eqref{eq:strichartze1} to obtain
\[
\big\| e^{- i t \Lap} \LP_{N} \QP_{n} f \big\|^2_{L_t^{2 \frac{d +\sigma}{\sigma}} L_x^{2 \frac{d + \sigma}{\sigma}} (\R \times \R^d)}
\begin{ltae}
\lesssim \big\| e^{- i t \Lap} \LP_{N} \QP_{n} f \big\|^2_{L_t^{2 \frac{d+ \sigma}{\sigma}} L_x^{\frac{2 d (d + \sigma)}{d (d + \sigma) - \sigma^2}} (\R \times \R^d)}
\\
\lesssim \|\LP_{N} \QP_{n} f\|^2_{L_x^2 (\R^d)} .
\end{ltae}
\]

Next, we estimate the $L^{2, \frac{2}{\epsilon}}_{e_l} $ component of the
$Y_N^{\epsilon} (I)$ norm. As above, \Cref{lem:proba1} together with
Minkowski's inequality yields for any $\gamma > \frac{2}{\epsilon}$ that
\[ \begin{ltae}
\Big\| \sum_{l = 1}^d N^{- \frac{\sigma - 1}{2}} \big\| e^{- i t \Lap} \LP_{N} f^{\omega} \big\|_{L^{2, \frac{2}{\epsilon}}_{e_l} (I \times \R^d)} \Big\|_{L_{\omega}^{\gamma}}
\\
\qquad \begin{ltae}
\lesssim \sum_{l = 1}^d N^{- \frac{\sigma - 1}{2}} {\bigg\| \Big\| \sum_{n \in \Z^d} g_n (\omega) e^{- i t \Lap} \LP_{N} \QP_{n} f\Big\|_{L_{\omega}^{\gamma}} \bigg\| }_{L^{2, \frac{2}{\epsilon}}_{e_l} (I \times \R^d)}
\\
\lesssim \sqrt{\gamma} \sum_{l = 1}^d N^{- \frac{\sigma - 1}{2}}{\bigg\| \Big( \sum_{n \in \Z^d} \big| e^{- i t \Lap} \LP_{N} \QP_{n} f\big|^2 \Big)^{\frac{1}{2}} \bigg\| }_{L^{2, \frac{2}{\epsilon}}_{e_l} (I \times \R^d)}
\\
\lesssim \sqrt{\gamma} \sum_{l = 1}^d N^{- \frac{\sigma - 1}{2}} \Big( \sum_{n \in \Z^d} \big\|e^{- i t \Lap} \LP_{N} \QP_{n} f\big\|^2_{L^{2, \frac{2}{\epsilon}}_{e_l} (I \times \R^d)} \Big)^{\frac{1}{2}} .
\end{ltae}
\end{ltae} \]
From the Hölder inequality and Lemma \ref{lem:strichartz} follows
\[ \big\| e^{-it \Lap} \LP_{N} \QP_{n} f\big\| _{L^{2, 2}_{e_l} (I \times \R^d)}
\begin{ltae}
= \big\|e^{- i t \Lap} \LP_{N} \QP_{n} f\big\| _{L^2_t L^2_x (I \times \R^d)}
\\
\leq | T_0 |^{\frac{1}{2}} \|e^{- i t \Lap} \LP_{N} \QP_{n} f\|_{L^{\infty}_t L^2_x (I \times \R^d)}
\\
\lesssim | T_0 |^{\frac{1}{2}} \| \LP_{N} \QP_{n} f \|_{L^2( \R^d)}
\end{ltae} \]
and an interpolation with \Cref{lem:dir-maximal-unit-scale} gives us
\[ \langle n \rangle^{- \frac{\sigma - 1}{2}} \big\|e^{-it \Lap} \LP_{N} \QP_{n} f\big\|^2_{L^{2, \frac{2}{\epsilon}}_{e_l} (I \times \R^d)} \lesssim \langle n \rangle^{| O (\epsilon) |} \| \LP_{N} \QP_{n} f \|_{L^2 ( \R^d )} \]
and since $\LP_{N} \QP_{n}$ is nonzero only if $|n| \approx N$,
\[
\begin{ltae}
\Big\| \sum_{l = 1}^d N^{- \frac{\sigma - 1}{2}} \big\| e^{- i t \Lap}\LP_{N} f^{\omega} \big\|_{L^{2, \frac{2}{\epsilon}}_{e_l} (I \times \R^d)} \Big\|_{L_{\omega}^{\gamma}}
\\
\quad
\begin{ltae}
\lesssim \sqrt{\gamma} \sum_{l = 1}^d N^{| O (\epsilon) |} \Big( \sum_{n \in \Z^d} \|\LP_{N} \QP_{n} f\|^2_{L_x^2 (\R^d)}\Big)^{\frac{1}{2}}
\\
\lesssim \sqrt{\gamma} \sum_{l = 1}^d N^{| O (\epsilon) |} \|\LP_{N} f\|_{L_x^2 (\R^d)} .
\end{ltae}
\end{ltae}
\]

Similarly,
\[
\big\| \LP_{N} e^{- i t \Lap} f^{\omega} \big\|_{L_{e_l}^{2, 2} (I \times\R^d)}
\begin{ltae}
= \big\| \LP_{N} e^{- i t \Lap} f^{\omega} \big\|_{L_t^2 L^2_x (I\times \R^d)}
\\
\leq | T_0 |^{\frac{1}{2}} \big\| \LP_{N} e^{- i t \Lap} f^{\omega}\big\|_{L^{\infty}_t L^2_x (I \times \R^d)}
\\
\lesssim \| \LP_{N} f \|_{L^2_x (\R^d)}
\end{ltae} \]
and an interpolation with \eqref{eq:dir-local-smoothing} yields

\[
\big\| \LP_{N} e^{- i t \Lap} f^{\omega}\big\|_{L_{e_l}^{\frac{2}{\epsilon}, 2} (I \times \R^d)} \lesssim N^{-\frac{\sigma - 1}{2} + |O(\epsilon)|} \|\LP_{N} f \|_{L^2_x (\R^d)} .
\]

A combination of previous estimates and the definition of the Sobolev space
$H^{S + | O (\epsilon) |}$ give us
\[
\Big\|\big\| e^{-i t \Lap} f^{\omega} \big\|_{Y^S (\R)} \Big\|_{L^{\gamma}_{\omega}}
\lesssim \sqrt{\gamma} \|f\|_{H_x^{S + | O (\epsilon) |} (\R^d)},
\]
and we conclude by using \Cref{lem:proba2}.
\end{proof*}

\section{Nonlinear estimates}\label{sec:nonlinear}

In this section we estimate the expression
\[  |F + v|^2 (F + v)  \]
in the norm
$X^{\xs, \epsilon} (I)^{\ast}$, 
where $v$ is a fixed function with finite $X^{\xs} (I)$ norm and $F$ is controlled in the norm $Y^{S, \epsilon} (I)$.
In the next section, we set 
$F := e^{- i t \Lap}
f^{\omega}$.  

A Littlewood-Paley decomposition of both
$F$ and $v$ yields
\[ F = \sum_{N \in 2^{\N}} \LP_{N} F, \qquad v = \sum_{N \in 2^{\N}} \LP_{N} v, \]
so that
\[ |F + v|^2 (F + v) \begin{ltae}
     = \sum_{N_1, N_2, N_3 \in 2^{\N}} (\LP_{N_1} F +\LP_{N_1} v)
     \bar{(\LP_{N_2} F +\LP_{N_2} v)} (\LP_{N_3} F +\LP_{N_3} v)\\
     = \sum_{h_1, h_2, h_3 \in \{ F, v \}} \sum_{N_1, N_2, N_3 \in 2^{\N}}
    \LP_{N_1} h_1 \bar{ \LP_{N_2} h_2}\LP_{N_3} h_3 \\
     = \sum_{h_1, h_2, h_3 \in \{ F, v \}} \sum_{N_1, N_2, N_3, N_4 \in 2^{\N}}
    \LP_{N_4} (\LP_{N_1}h_1 \bar{ \LP_{N_2} h_2}\LP_{N_3} h_3)
     .
   \end{ltae} \]
For every assignment
$h_1, h_2, h_3 \in \{ F, v \}$ we estimate $ \LP_{N_4} (\LP_{N_1}h_1 \bar{ \LP_{N_2} h_2}\LP_{N_3} h_3)$ in 
$X^{\xs, \epsilon} (I)^{\ast}$. By duality is 
suffices to bound 
\[
\Big| \int_{I \times \R^d} (\LP_{N_1} h_1) \bar{(\LP_{N_2} h_2)}(\LP_{N_3} h_3) \bar{(\LP_{N_4} v_*)} \dd t \dd x   \Big|
\]
for any $v_*$ with $\|v_*\|_{X^{\epsilon}_{N_4} (I)} = 1$. To unify the notation, 
we define a number $\ms{A}_j$ and the norm $\| \cdot \|_{N_j, \epsilon} $ as 
\begin{equation}\label{eq:dfaj}
\begin{cases}
\ms{A}_j = S  \text{ and } \|\LP_{N_j} h_j\|_{N_j, \epsilon} = \|\LP_{N_j} F \|_{Y_{N_j}^{\epsilon} (I)}           & \text{ if } h_j = F             \\
\ms{A}_j = \xs  \text{ and } \|\LP_{N_j} h_j \|_{N, \epsilon} = \| \LP_{N_j} v \|_{X_{N_j}^{\epsilon} (I)}           & \text{ if } h_j = v             \\
\ms{A}_j = - \xs  \text{ and } \|\LP_{N_j} h_j \|_{N, \epsilon} = \|\LP_{N_j} v_{\ast} \|_{X_{N_j}^{\epsilon} (I)} & \text{ if }  h_j = v_{\ast} \,.
\end{cases}
\end{equation}
The main result of this section is the bound 
\begin{multline}
\Big| \int_{I \times \R^d} (\LP_{N_1} h_1) \bar{(\LP_{N_2} h_2)}(\LP_{N_3} h_3) \bar{(\LP_{N_4} h_4)} \dd t \dd x   \Big|
  \\
   \lesssim_{\tilde{\epsilon}} \max (N_1, N_2, N_3, N_4)^{-\tilde{\epsilon}} \prod_{j = 1}^4 N_j^{\ms{A}_j} \|\LP_{N_j} h_j \|_{N_j, \epsilon} .
   \label{eq:fixed-frequency-4linear}
\end{multline}
 for any small enough positive $\tilde{\epsilon} > 0$ and any
$h_1, h_2, h_3, h_4 \in \{ F, v, v_{\ast} \}$ with
$h_j = v_*$ for 
exactly $j \in \{1, \cdots, 4\}$.
Before proceeding, we state an application of  \eqref{eq:fixed-frequency-4linear}. 

\begin{corollary}\label{cor:main-nonlinear-estimate}
Let $h_1, h_2, h_3 \in \{ F, v \}$ and
suppose that \eqref{eq:fixed-frequency-4linear} holds for any small
$\tilde{\epsilon} > 0$ and any small $\epsilon > 0$, possibly
depending on $\tilde{\epsilon}$. Then,
\[
\| h_1 \bar{h}_2 h_3 \|_{X^{\xs + \tilde{\epsilon} / 8, \epsilon}(I)^{\ast}} \lesssim \prod_{j = 1}^3 \| h_j \|_{\ms{A}_j, \epsilon} ,
\]
where
\[
\| h_j \|_{\ms{A}_j, \epsilon} =
\begin{cases}
\| F \|_{Y^{S, \epsilon} (I)}   & \text{ if } h_j = F     \\
\| v \|_{X^{\xs, \epsilon} (I)} & \text{ if } h_j = v \,.
\end{cases}
\]
\end{corollary}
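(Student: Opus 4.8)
The plan is to deduce \Cref{cor:main-nonlinear-estimate} from the fixed-frequency bound \eqref{eq:fixed-frequency-4linear} by duality and a Littlewood--Paley summation. First I would unwind the definitions \eqref{eq:defn:X-norm} and \eqref{eq:defn:X-norm-dual}. Writing
\[
h_1\bar h_2 h_3=\sum_{N_1,N_2,N_3,N_4\in2^{\N}}\LP_{N_4}\big(\LP_{N_1}h_1\,\bar{\LP_{N_2}h_2}\,\LP_{N_3}h_3\big),
\]
the claim is equivalent to
\[
\Big(\sum_{N_4\in2^{\N}}N_4^{2(\xs+\tilde\epsilon/8)}\big\|\LP_{N_4}\big(h_1\bar h_2 h_3\big)\big\|_{X^{\epsilon}_{N_4}(I)^{\ast}}^2\Big)^{1/2}\lesssim\prod_{j=1}^3\|h_j\|_{\ms{A}_j,\epsilon}.
\]
For every dyadic $N_4$ I would bound $\big\|\LP_{N_4}(h_1\bar h_2 h_3)\big\|_{X^\epsilon_{N_4}(I)^\ast}$ by the triangle inequality for the dual norm, expanding into the triple sum over $N_1,N_2,N_3$, and, using that $\LP_{N_4}$ is self-adjoint, moving it onto the test function: by \eqref{eq:defn:X-norm-dual}, $\big\|\LP_{N_4}\big(\LP_{N_1}h_1\bar{\LP_{N_2}h_2}\LP_{N_3}h_3\big)\big\|_{X^\epsilon_{N_4}(I)^\ast}$ is the supremum over $\|v_\ast\|_{X^\epsilon_{N_4}(I)}\leq1$ of $\big|\int_{I\times\R^d}\LP_{N_1}h_1\,\bar{\LP_{N_2}h_2}\,\LP_{N_3}h_3\,\bar{\LP_{N_4}v_\ast}\,\dd t\,\dd x\big|$, which is precisely the quantity estimated in \eqref{eq:fixed-frequency-4linear} with $h_4=v_\ast$, $\ms{A}_4=-\xs$ and $\|\LP_{N_4}v_\ast\|_{N_4,\epsilon}\leq1$.

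Set $a_j(N)\eqd N^{\ms{A}_j}\|\LP_N h_j\|_{N,\epsilon}$ for $j\in\{1,2,3\}$; by the definitions of the $Y^{S,\epsilon}(I)$ and $X^{\xs,\epsilon}(I)$ norms the sequence $(a_j(N))_{N\in2^{\N}}$ lies in $\ell^2$ with norm $\|h_j\|_{\ms{A}_j,\epsilon}$. Applying \eqref{eq:fixed-frequency-4linear} and absorbing the factor $N_4^{-\xs}$, I obtain, with $M\eqd\max(N_1,N_2,N_3)$,
\[
N_4^{\xs}\big\|\LP_{N_4}\big(\LP_{N_1}h_1\bar{\LP_{N_2}h_2}\LP_{N_3}h_3\big)\big\|_{X^\epsilon_{N_4}(I)^\ast}\lesssim_{\tilde\epsilon}\max(M,N_4)^{-\tilde\epsilon}\,a_1(N_1)\,a_2(N_2)\,a_3(N_3).
\]
The left-hand side vanishes unless the Fourier supports are compatible, i.e.\ unless $N_4\lesssim M$ and the two largest among $N_1,N_2,N_3,N_4$ are comparable; in particular $\max(M,N_4)\approx M$, so $N_4^{\tilde\epsilon/8}\lesssim M^{\tilde\epsilon/8}$ and the net dyadic gain is $M^{-7\tilde\epsilon/8}$. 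Moreover, letting $N_{(1)}\geq N_{(2)}\geq N_{(3)}$ be the decreasing rearrangement of $(N_1,N_2,N_3)$, compatibility forces either $N_4\approx N_{(1)}$ or $N_{(1)}\approx N_{(2)}$.

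It then remains to sum. In the regime $N_4\approx N_{(1)}$ there are only $O(1)$ admissible $N_4$ for each $(N_1,N_2,N_3)$, so the $\ell^2_{N_4}$ costs nothing and, after distributing $M^{-7\tilde\epsilon/8}$, one sums the two smaller frequencies against the $\ell^2$ sequences by Cauchy--Schwarz and the largest one using the surplus decay, all sums being geometric up to harmless logarithmic factors. In the high-high regime $N_{(1)}\approx N_{(2)}$ there are $\sim\log N_{(1)}$ admissible values $N_4\lesssim N_{(1)}$, but part of the gain $M^{-7\tilde\epsilon/8}$ absorbs this logarithm together with the $\ell^2_{N_4}$-summation, while the remaining negative power handles the summation over $N_{(3)}$ and the off-diagonal spread of $N_{(2)}$ about $N_{(1)}$, after which a final Cauchy--Schwarz in $N_{(1)}$ yields $\lesssim\prod_{j=1}^3\|h_j\|_{\ms{A}_j,\epsilon}$. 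Summing over the two regimes and the finitely many orderings of $(N_1,N_2,N_3)$ completes the argument. The genuinely delicate point is this last high-high-low bookkeeping: there $N_4$ may be much smaller than the dominant frequencies and there are logarithmically many such output frequencies, so one must check that the $\ell^2$-summation in $N_4$ closes alongside the three dyadic sums in $N_1,N_2,N_3$ — which is exactly what the margin between the exponent $\tilde\epsilon$ in \eqref{eq:fixed-frequency-4linear} and the exponent $\tilde\epsilon/8$ in the statement provides; everything else (duality for $X^\epsilon_{N_4}(I)$, self-adjointness of $\LP_{N_4}$, and identifying the $\ell^2$ weights with $\|h_j\|_{\ms{A}_j,\epsilon}$) is routine.
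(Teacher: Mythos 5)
Your setup — the duality, self-adjointness of $\LP_{N_4}$, recognition that each Littlewood--Paley piece is exactly the quadrilinear quantity controlled in \eqref{eq:fixed-frequency-4linear} with $h_4=v_*$ and $\ms{A}_4=-\xs$, and the identification of $a_j(N)=N^{\ms{A}_j}\|\LP_N h_j\|_{N,\epsilon}$ as $\ell^2$ sequences — matches the paper. Where you diverge is in the dyadic summation, and your route is genuinely different and noticeably heavier.

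The paper closes the sum with one observation: since every dyadic block is $\geq 1$, the maximum dominates the geometric mean, so
\[
\max(N_1,N_2,N_3,N_4)^{-\tilde\epsilon}\leq (N_1 N_2 N_3 N_4)^{-\tilde\epsilon/4}.
\]
This gives a uniform negative power of \emph{each} of the four frequencies; the quadruple sum then factorizes completely, and each of the three inner sums closes by a single Cauchy--Schwarz against the $\ell^2$ weights while $\sum_{N_4}N_4^{-\tilde\epsilon/4}$ is finite. No frequency-interaction case analysis, no logarithm bookkeeping, no use of Fourier-support compatibility at this stage (that constraint is used inside the proof of Proposition \ref{prop:fixed-frequency-4linear}, not in the corollary).

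Your argument instead splits into the output-high case $N_4\approx N_{(1)}$ and the high-high case $N_{(1)}\approx N_{(2)}\gtrsim N_4$, and manages log factors by hand. This is sound in spirit, but the delicate point you flag is exactly where the write-up is thinnest: in the high-high-low regime, the $\ell^2_{N_4}$ sum over $N_4\lesssim N_{(1)}$ with the weight $N_4^{\tilde\epsilon/8}$ is not merely logarithmic — it costs $N_{(1)}^{\tilde\epsilon/8}$ — and to kill it you must explicitly route part of the $M^{-\tilde\epsilon}$ decay onto the $N_4$ variable via $N_4\lesssim M$, e.g.\ $M^{-\tilde\epsilon}\leq N_4^{-\tilde\epsilon/4}M^{-3\tilde\epsilon/4}$, before you can sum. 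Once you do that, you are effectively rediscovering the paper's ``distribute the power over all four frequencies'' device, only with the extra overhead of the regime split. So the proposal is correct but the paper's argument is the same mechanism applied more efficiently: recognizing that the max-over-four decay already redistributes for free over all four dyadic sums, including $N_4$, renders both regimes and the log-counting unnecessary.
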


\begin{proof}
By the definition of $X^{s, \epsilon} (I)^{\ast}$ and  $X^{\epsilon}_N (I)^{\ast}$
\begin{align}
\| h_1 \bar{h}_2 h_3 \|^2_{X^{\xs + \tilde{\epsilon} / 8, \epsilon}
(I)^{\ast}}  = \sum_{N \in 2^{\N}} N^{2\xs + \frac{\tilde{\epsilon}}{4}} \|\LP_{N}(h_1 \bar{h}_2 h_3)\|_{X^\epsilon_N(I)^*}^2
\end{align}
and
\[
\|\LP_{N}(h_1 \bar{h}_2 h_3)\|_{X^\epsilon_N(I)^*}
\begin{ltae}
= \sup_{v_*} \bigg| \int_{I \times \R^d} \LP_{N}(h_1 \bar{h}_2 h_3) v_* \dd t \dd x \bigg|
\\
= \sup_{v} \bigg| \int_{I \times \R^d} h_1 \bar{h}_2 h_3 \LP_{N} v_* \dd t \dd x \bigg| \,,
\end{ltae}
\]
where the supremum is taken over $v_*$ with $\|v_*\|_{X^\epsilon_{N}(I)} \leq 1$.
Then, by   \eqref{eq:fixed-frequency-4linear} and 
$\max (N_1, N_2, N_3, N_4)^{-\tilde{\epsilon}} \leq (N_1N_2N_3N_4)^{-\tilde{\epsilon}/4}$ one has
\[
\|\LP_{N}(h_1 \bar{h}_2 h_3)\|_{X^\epsilon_N(I)^*}
\begin{ltae}
= \sup_{v_*} \bigg| \int_{I \times \R^d}  \sum_{N_1, N_2, N_3} (\LP_{N_1} h_1) \bar{(\LP_{N_2} h_2)}  (\LP_{N_3} h_3) \LP_{N} v_*  \dd t \dd x \bigg|
\\
\lesssim_{\tilde{\epsilon}} N^{-\xs - \frac{\tilde{\epsilon}}{4}} \sum_{N_1, N_2, N_3}  \prod_{j = 1}^3 N_j^{\ms{A}_j - \frac{\tilde{\epsilon}}{4}} \|\LP_{N_j} h_j  \|_{N_j, \epsilon} \,,
\end{ltae}
\]
and therefore by Cauchy-Schwarz inequality and by the summability of $\sum_{N \in 2^{\N}} N^{-\frac{\tilde{\epsilon}}{4}}$ we obtain
\[
\| h_1 \bar{h}_2 h_3 \|^2_{X^{\xs + \tilde{\epsilon} / 8, \epsilon} (I)^{\ast}}
\begin{ltae}
\lesssim_{\tilde{\epsilon}} \sum_{N \in 2^{\N}} N^{-\frac{\tilde{\epsilon}}{4}}  \prod_{j = 1}^3  \Big(\sum_{N_j} N_j^{\ms{A}_j -\frac{\tilde{\epsilon}}{4}} \|\LP_{N_j} h_j  \|_{N_j, \epsilon}\Big)^2
\\
\lesssim_{\tilde{\epsilon}}\prod_{j = 1}^3  \sum_{N_j} N_j^{2\ms{A}_j} \|\LP_{N_j} h_j \|_{N_j, \epsilon}^2 \lesssim \prod_{j = 1}^3 \| h_j \|_{\ms{A}_j, \epsilon}^2\,,
\end{ltae}
\]
as desired.
\end{proof}

\begin{proposition}\label{prop:fixed-frequency-4linear}
If
\[ \frac{d - \sigma}{2} \geq S > S_{\min} \eqd \frac{(d - \sigma)}{2}
\begin{cases}
\frac{1}{3} & \sigma \geq \frac{d + 2}{3}
\\
\frac{d + 1 - 2 \sigma}{(d - 1)} &  \sigma \leq \frac{d + 2}{3}\,,
\end{cases}
\]
then there exists $\xs > \frac{d - \sigma}{2}$ and $\epsilon > 0$ sufficiently small
such that bound \eqref{eq:fixed-frequency-4linear} holds.
\end{proposition}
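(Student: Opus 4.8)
The plan is to prove the fixed-frequency bound \eqref{eq:fixed-frequency-4linear} by reducing to an ordered frequency configuration, applying Cauchy--Schwarz to split the space-time integral into two bilinear pieces in $L^2_t L^2_x$, and estimating each piece either by pairing a directional local smoothing norm against a directional maximal norm, or by H\"older of the Strichartz-type components of the norms. First I would relabel the frequencies as $M_1 \geq M_2 \geq M_3 \geq M_4$ and the functions correspondingly as $g_1,\dots,g_4 \in \{F,v,v_{\ast}\}$, with $\ms{B}_i \in \{S,\xs,-\xs\}$ the associated exponent (so $\ms{B}_i = S$ if $g_i = F$, $\ms{B}_i=\xs$ if $g_i = v$, $\ms{B}_i=-\xs$ if $g_i = v_\ast$), recalling that exactly one $g_i$ equals $v_\ast$. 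Since $\LP_{M_i}g_i$ has Fourier support in $\{|\xi| \approx M_i\}$, the integrand vanishes unless $M_1 \approx M_2$; and for $M_i \le N_0$ (with $N_0$ as in \Cref{lem:dir-maximal}, \Cref{lem:dir-local-smoothing}) the estimates are elementary by \Cref{lem:bernstein}. So it remains to treat $M_1 \approx M_2 > N_0$; since $|\nabla\Lap| \gtrsim |\xi|^{\sigma-1}$ by \eqref{eq:symbol-bddness}, for $M_i > N_0$ the support condition places $\LP_{M_i}g_i$ where $\nabla\Lap \neq 0$, so $\LP_{M_i}g_i = \sum_{l=1}^d \LP_{M_i}\UP_{e_l}^{\Lap}g_i$, which licenses the use of the local smoothing estimates direction by direction.

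The engine is a short list of bilinear $L^2_t L^2_x$ estimates. For $M \geq M' > N_0$, decomposing the higher-frequency factor along the directions $e_l$ and combining via H\"older in the spaces $L^{a,b}_{e_l}$ gives
\[
\big\|(\LP_M g)\,\overline{(\LP_{M'}g')}\big\|_{L^2_t L^2_x(I\times\R^d)} \lesssim_{\epsilon} M^{-\frac{\sigma-1}{2}+|O(\epsilon)|}\,(M')^{\beta+|O(\epsilon)|}\,\|\LP_M g\|_{M,\epsilon}\,\|\LP_{M'}g'\|_{M',\epsilon},
\]
with $\beta = \frac{\sigma-1}{2}$ when the lower-frequency factor $g'$ is the free evolution $F$ and $\beta = \frac{d-1}{2}$ when $g'$ is $v$ or $v_\ast$: the gain $M^{-\frac{\sigma-1}{2}}$ is the local smoothing component of the norm of $g$ (built into \eqref{eq:defn:X-norm}, \eqref{eq:defn:Y-norm} through \Cref{lem:dir-local-smoothing}) applied to $\LP_M\UP_{e_l}^{\Lap}g$, the loss $(M')^{\beta}$ is the maximal component of the norm of $g'$, and the $\epsilon$-level discrepancy between the integrability exponents in these norms and the $L^2$ needed to close H\"older is absorbed by \Cref{lem:bernstein} on the frequency-localized factors at the cost of $M^{|O(\epsilon)|}$. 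Such a local-smoothing-against-maximal pairing is available whenever at least one factor is $F$, but not for two factors of $X$-type. For a pair of factors of the same kind one instead uses H\"older of the Strichartz-type components: for two $F$'s this costs nothing, since (as $d > \sigma$) interpolating $L^{2\frac{d+\sigma}{\sigma}}_{t,x}$ with $L^\infty_t L^2_x$ and using $|I| \lesssim T_0$ yields $\|\LP_M F\|_{L^4_{t,x}} \lesssim \|\LP_M F\|_{Y^{\epsilon}_M}$; for an $F$--$v$ pair it likewise costs nothing, since $\frac{\sigma}{2(d+\sigma)}+\frac{d}{2(d+\sigma)} = \frac12$ and $L^{2\frac{d+\sigma}{\sigma}}_{t,x}$, $L^{2\frac{d+\sigma}{d}}_{t,x}$ are loss-free in the respective norms; and for two $X$-type factors it costs $M^{(d-\sigma)/4}(M')^{(d-\sigma)/4+|O(\epsilon)|}$. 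Finally, since all of these are upper bounds, for a given pair one may take the minimum of the bilinear and the Strichartz bound; for a high/low pair this minimum has a corner near $M' = M^{(\sigma-1)/(d-1)}$.

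With these in hand, one inserts them into the Cauchy--Schwarz splitting $|\int\cdots| \le \|(\LP_{M_i}g_i)(\LP_{M_j}g_j)\|_{L^2_{t,x}}\,\|(\LP_{M_k}g_k)(\LP_{M_l}g_l)\|_{L^2_{t,x}}$, choosing the partition $\{i,j\}\cup\{k,l\} = \{1,2,3,4\}$ and the high/low role of each factor within a pair so as to minimize the loss --- the guiding principle being to route every factor equal to $F$ into a ``maximal'' slot (incurring only the favourable $\frac{\sigma-1}{2}$ loss) and to keep at most one pair of $X$-type factors. Using $M_1 \approx M_2$ and collecting exponents, \eqref{eq:fixed-frequency-4linear} reduces, in each configuration (determined by which $g_i$ is $v_\ast$, its frequency rank, and how many $g_i$ equal $F$), to an elementary monomial inequality $\prod_i M_i^{c_i} \lesssim M_1^{-\tilde\epsilon}\prod_i M_i^{\ms{B}_i}$ with each $c_i$ assembled from $\{-\frac{\sigma-1}{2},\frac{\sigma-1}{2},\frac{d-1}{2},\frac{d-\sigma}{4},|O(\epsilon)|\}$; maximizing the left side over $(M_2,M_3,M_4)$ subject to $M_4 \le M_3 \le M_2 \approx M_1$ and over the branches of the minima, one checks this holds with room to spare once $\xs$ is taken slightly above $\frac{d-\sigma}{2}$ and $\epsilon,\tilde\epsilon$ are small. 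The extremal configurations are the ``fully diagonal'' interaction of three copies of $F$ with $v_\ast$, which carries no loss but forces $3S - \xs > \tilde\epsilon$ and hence $S > \frac{d-\sigma}{6} = \frac{d-\sigma}{2}\cdot\frac13$; and a configuration in which the corner of a high/low $F$--$v$-type bilinear estimate interacts with an $X$-type top-frequency factor, which forces $S > \frac{d-\sigma}{2}\cdot\frac{d+1-2\sigma}{d-1}$. Taking the maximum of the two thresholds yields $S_{\min}$, the two branches crossing at $\sigma = \frac{d+2}{3}$.

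I expect the bookkeeping in the last step to be the main obstacle: correctly identifying, for every placement of $v_\ast$ and every count of $F$'s, the pairing and role assignment minimizing the loss, then solving the resulting finite family of monomial inequalities --- including the intermediate-frequency maxima created by the corners of the minima --- and verifying that the family is simultaneously satisfiable (for a common choice of $\xs$ slightly above $\frac{d-\sigma}{2}$) exactly when $S > S_{\min}$, thereby recovering the stated threshold and its dichotomy. Setting up the bilinear estimates with the precise $|O(\epsilon)|$ losses, and checking that the Strichartz components leave enough room for the Bernstein reconciliation into $L^2_{t,x}$, is the remaining technical point.
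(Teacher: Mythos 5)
Your overall structure is the paper's: order the frequencies, force $N_1\approx N_2$ from Fourier support, split by Cauchy--Schwarz into two bilinear $L^2_t L^2_x$ pieces, estimate each by pairing a local-smoothing component of the high-frequency factor against a maximal (or Strichartz) component of the low-frequency factor, and run a case analysis over the placement of $v_\ast$ and the number of $F$'s. The two extremal thresholds you identify — $\frac{d-\sigma}{6}$ from the all-$F$ interaction and $\frac{d-\sigma}{2}\cdot\frac{d+1-2\sigma}{d-1}$ from the ``corner'' — are exactly the ones the paper obtains.

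However, there is a genuine error in your list of bilinear inputs. You assert that the local-smoothing-against-maximal pairing is unavailable for two $X$-type factors, because the raw H\"older of $L^{\infty,\frac{2}{1-\epsilon}}_{e_l}$ against $L^{\frac{2}{1-\epsilon},\infty}_{e_l}$ lands in $L^{\frac{2}{1-\epsilon}}_{t,x}$ (overshooting $L^2$), and Bernstein cannot lower the exponent. Your observation about Bernstein is correct, but the conclusion is not: the paper's \Cref{lem:bilinear-2-2-bound}, bound \eqref{eq:XX-YY-XY-1}, establishes exactly the $X$--$X$ estimate
$\|\LP_{N_+}h_+\LP_{N_-}h_-\|_{L^2_{t,x}}\lesssim N_+^{|O(\epsilon)|}N_+^{-\frac{\sigma-1}{2}}N_-^{\frac{d-1}{2}}\|h_+\|_{X^\epsilon_{N_+}}\|h_-\|_{X^\epsilon_{N_-}}$
by interpolating the $L^{\frac{2}{1-\epsilon}}_{t,x}$ directional H\"older bound against an $L^{3/2}_{t,x}$ bound coming from the $L^\infty_t L^{\frac{2}{1-\epsilon}}_x$ Strichartz component (this uses Bernstein in the allowed direction, from $L^{\frac{2}{1-\epsilon}}_x$ up to $L^3_x$). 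The substitute you propose — the symmetric Strichartz pairing giving roughly $M^{\frac{d-\sigma}{4}}(M')^{\frac{d-\sigma}{4}}$ — carries the same total scaling but distributes the loss evenly, so it is strictly weaker when $M'\ll M$. This strength is essential: in the configurations where a pair $(v,v)$, $(v_\ast,v)$ or $(v,v_\ast)$ occupies one side of the Cauchy--Schwarz split (the paper's Cases 1 and 2), the local-smoothing factor $N_+^{-\frac{\sigma-1}{2}}$ on the top frequency is what cancels the $\xs\approx\frac{d-\sigma}{2}$ of the $v_\ast$ weight; with the symmetric bound one instead picks up $N_1^{\frac{d-\sigma}{4}}$ and is left needing $S\gtrsim\frac{d-\sigma}{4}$ or worse, which exceeds $S_{\min}=\frac{d-\sigma}{6}$ when $\sigma\geq\frac{d+2}{3}$. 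So, as written, your plan does not close to the claimed threshold; you need the full $X$--$X$ bound of \Cref{lem:bilinear-2-2-bound}, obtained by interpolating against a favorable non-endpoint $L^p_{t,x}$ norm rather than by Bernstein alone. Beyond this, the casework itself (the paper's Cases 1--4, including the re-pairing tricks when $v_\ast$ sits in position 1 or 4 and the choice of $\theta$ in \eqref{eq:YX}) is only sketched in your proposal, and since you correctly flag it as the main technical burden, the argument is not yet a proof.
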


The proof of \Cref{prop:fixed-frequency-4linear} relies on
the following bilinear estimates.

\begin{lemma}\label{lem:bilinear-2-2-bound}
Let $N_+, N_- \in 2^{\N}$ with $N_+ \gtrsim N_-$. The following bilinear estimates hold for any two functions $h_+, h_- \st I \times \R^d \rightarrow \C$:
\begin{align}
\label{eq:XX-YY-XY-1}
 & \begin{ltae}
\|\LP_{N_+} h_+\LP_{N_-} h_- \|_{L^2_t L^2_x ( I \times \R^d )}
\\ \qquad\leq N_+^{| O (\epsilon) |} N_+^{- \frac{\sigma - 1}{2}} N_-^{\frac{d- 1}{2}} \|\LP_{N_+} h_+ \|_{X^{\epsilon}_{N_+} (I)} \|\LP_{N_-} h_-\|_{X^{\epsilon}_{N_-} (I)},
\end{ltae}
\\
\label{eq:XX-YY-XY-2}
 & \begin{ltae}
\|\LP_{N_+} h_+\LP_{N_-} h_- \|_{L^2_t L^2_x ( I \times \R^d )}
\\ \qquad \leq N_+^{| O (\epsilon) |}  N_+^{- \frac{\sigma - 1}{2}} N_-^{\frac{\sigma - 1}{2}} \|\LP_{N_+} h_+ \|_{Y_{N_+}^{\epsilon} (I)} \|\LP_{N_-} h_- \|_{Y_{N_-}^{\epsilon} (I)},
\end{ltae}
\\
\label{eq:XX-YY-XY-3}
 & \begin{ltae}
\|\LP_{N_+} h_+\LP_{N_-} h_- \|_{L^2_t L^2_x ( I \times \R^d )}
\\ \qquad\leq N_+^{| O (\epsilon) |}  N_+^{- \frac{\sigma - 1}{2}} N_-^{\frac{\sigma - 1}{2}} \|\LP_{N_+} h_+ \|_{X_{N_+}^{\epsilon} (I)} \|\LP_{N_-} h_- \|_{Y_{N_-}^{\epsilon} (I)},
\end{ltae}
\end{align}
and
\begin{equation}\label{eq:YX}
\begin{ltae}
\|\LP_{N_+} h_+\LP_{N_-} h_- \|_{L^2_t L^2_x ( I \times \R^d )}
\\
\qquad \leq N_+^{| O (\epsilon) |}  N_+^{- \frac{\sigma - 1}{2} \theta} N_-^{\frac{d - 1}{2} \theta} \|\LP_{N_+} h_+ \|_{Y_{N_+}^{\epsilon}(I)} \|\LP_{N_-} h_- \|_{X_{N_-}^{\epsilon} (I)}
\end{ltae}
\end{equation}
for any $\theta \in [0, 1]$.
\end{lemma}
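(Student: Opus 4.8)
The plan is to reduce all four inequalities to a single scheme: pair a \emph{directional local smoothing} norm of the high‑frequency factor $\LP_{N_+}h_+$ against a \emph{directional maximal} norm of the low‑frequency factor $\LP_{N_-}h_-$, in the coordinate direction that records the group velocity of the high‑frequency wave. Since the sets $\mf{U}_{e_1},\dots,\mf{U}_{e_d}$ of \eqref{eq:microlocal-projections} are pairwise disjoint and their union contains $\{|\xi|\gtrsim 1\}$ — at each such $\xi$ some component of $\nabla\Lap(\xi)$ exceeds $|\nabla\Lap(\xi)|/(2\sqrt{d})$ — one has $\LP_{N_+}h_+=\sum_{l=1}^d\UP_{e_l}^{\Lap}\LP_{N_+}h_+$, so by the triangle inequality it suffices to bound each product $(\UP_{e_l}^{\Lap}\LP_{N_+}h_+)\,\LP_{N_-}h_-$ in $L^2_tL^2_x$ and sum over $l$. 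The main analytic input is the mixed‑norm Hölder inequality $\|FG\|_{L^{a,b}_{e_l}}\le\|F\|_{L^{a_1,b_1}_{e_l}}\|G\|_{L^{a_2,b_2}_{e_l}}$ whenever $\tfrac1a=\tfrac1{a_1}+\tfrac1{a_2}$ and $\tfrac1b=\tfrac1{b_1}+\tfrac1{b_2}$ (Hölder in $(t,x')$ pointwise in $x_l$, then in $x_l$), together with the definitions of $X^\epsilon_N$ and $Y^\epsilon_N$ and the Bernstein and Strichartz estimates already recorded.

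\textbf{The cases that close exactly.} For \eqref{eq:XX-YY-XY-3} ($h_+$ measured in $X$, $h_-$ in $Y$) I would take $F=\UP_{e_l}^{\Lap}\LP_{N_+}h_+$ in the $X^\epsilon_{N_+}$ local‑smoothing component $L^{\infty,\frac2{1-\epsilon}}_{e_l}$ (which costs $N_+^{-\frac{\sigma-1}2}\|\LP_{N_+}h_+\|_{X^\epsilon_{N_+}}$) and $G=\LP_{N_-}h_-$ in the $Y^\epsilon_{N_-}$ maximal component $L^{2,\frac2\epsilon}_{e_l}$ (costing $N_-^{\frac{\sigma-1}2}\|\LP_{N_-}h_-\|_{Y^\epsilon_{N_-}}$). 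Since $0+\tfrac12=\tfrac12$ and $\tfrac{1-\epsilon}2+\tfrac\epsilon2=\tfrac12$, the Hölder product lands \emph{exactly} in $L^{2,2}_{e_l}=L^2_tL^2_x$, and summing over $l$ yields \eqref{eq:XX-YY-XY-3}, the displayed $N_+^{|O(\epsilon)|}$ being harmless slack. The case $\theta=1$ of \eqref{eq:YX} is the same with roles swapped: $F$ in the $Y^\epsilon_{N_+}$ local‑smoothing component $L^{\frac2\epsilon,2}_{e_l}$ and $G$ in the $X^\epsilon_{N_-}$ maximal component $L^{\frac2{1-\epsilon},\infty}_{e_l}$ again combine exactly to $L^{2,2}_{e_l}$, now with the weights $N_+^{-\frac{\sigma-1}2}N_-^{\frac{d-1}2}$.

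\textbf{The cases needing an extra interpolation.} In \eqref{eq:XX-YY-XY-1} (both factors in $X$) the only available components are $L^{\infty,\frac2{1-\epsilon}}_{e_l}$ for $F$ and $L^{\frac2{1-\epsilon},\infty}_{e_l}$ for $G$, whose Hölder product is $L^{\frac2{1-\epsilon}}_{t,x}$; in \eqref{eq:XX-YY-XY-2} (both in $Y$) the components $L^{\frac2\epsilon,2}_{e_l}$ and $L^{2,\frac2\epsilon}_{e_l}$ give $L^{\frac2{1+\epsilon}}_{t,x}$. Neither is $L^2_{t,x}$, so the plan is to correct the integrability by log‑convexity of $L^p(I\times\R^d)$ norms, interpolating against the bilinear bound obtained from the $L^p_tL^q_x$ components by Cauchy--Schwarz: $\|\LP_{N_+}h_+\LP_{N_-}h_-\|_{L^{\frac{d+\sigma}d}_{t,x}}\le\|\LP_{N_+}h_+\|_{L^{2\frac{d+\sigma}d}_{t,x}}\|\LP_{N_-}h_-\|_{L^{2\frac{d+\sigma}d}_{t,x}}$ for \eqref{eq:XX-YY-XY-1} (the pair $(2\tfrac{d+\sigma}d,2\tfrac{d+\sigma}d)$ is $\Lap$‑admissible, hence controlled by $X^\epsilon_{N_\pm}$), and $\|\LP_{N_+}h_+\LP_{N_-}h_-\|_{L^{\frac{d+\sigma}\sigma}_{t,x}}\le\|\LP_{N_+}h_+\|_{L^{2\frac{d+\sigma}\sigma}_{t,x}}\|\LP_{N_-}h_-\|_{L^{2\frac{d+\sigma}\sigma}_{t,x}}$ for \eqref{eq:XX-YY-XY-2} (controlled by $Y^\epsilon_{N_\pm}$). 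Because $d>\sigma$ one has $\tfrac{d+\sigma}d<2<\tfrac2{1-\epsilon}$ and $\tfrac2{1+\epsilon}<2<\tfrac{d+\sigma}\sigma$, so $L^2_{t,x}$ lies strictly between the directional and the Strichartz output spaces, with interpolation weight $O(\epsilon)$; this converts the gain $N_+^{-\frac{\sigma-1}2}$ into $N_+^{-\frac{\sigma-1}2}N_+^{|O(\epsilon)|}$ and only improves the $N_-$ power. Finally, \eqref{eq:YX} for general $\theta\in[0,1]$ follows from the $\theta=1$ case above and the $\theta=0$ case — which is just $\|\LP_{N_+}h_+\LP_{N_-}h_-\|_{L^2_{t,x}}\le\|\LP_{N_+}h_+\|_{L^{2\frac{d+\sigma}\sigma}_{t,x}}\|\LP_{N_-}h_-\|_{L^{2\frac{d+\sigma}d}_{t,x}}$, exact since $\tfrac\sigma{2(d+\sigma)}+\tfrac d{2(d+\sigma)}=\tfrac12$ — via the elementary fact that $A\le B$ and $A\le C$ imply $A\le B^\theta C^{1-\theta}$.

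\textbf{Main obstacle.} Most of the argument is exponent bookkeeping; the genuinely structural point is the compatibility of the $\UP_{e_l}^{\Lap}$‑decomposition with the function spaces: that $\{\mf{U}_{e_l}\}$ really partitions the relevant frequencies, that the piece supported in $\mf{U}_{e_l}$ is measured by precisely the $e_l$‑directional local‑smoothing term appearing in $X^\epsilon_N$/$Y^\epsilon_N$, and that the low‑frequency maximal estimate is available in \emph{every} direction (\Cref{lem:dir-maximal}, \Cref{lem:dir-maximal-unit-scale}), so that no direction‑matching constraint arises on that factor. I expect the fiddly part to be checking that the directional components Hölder‑combine to exactly $L^{\frac2{1\mp\epsilon}}_{t,x}$, that the relevant Strichartz pairs are $\Lap$‑admissible (or reduce to admissible pairs by unit‑scale Bernstein), and that every interpolation weight that appears is $O(\epsilon)$.
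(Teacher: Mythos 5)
Your proposal is correct and follows essentially the same route as the paper: decompose the high-frequency factor via $\LP_{N_+}h_+=\sum_{l}\UP^{\Lap}_{e_l}\LP_{N_+}h_+$, pair the $e_l$-directional local-smoothing component of $h_+$ against the $e_l$-directional maximal component of $h_-$ via mixed-norm H\"older, read off \eqref{eq:XX-YY-XY-3} and the $\theta=1$ endpoint of \eqref{eq:YX} directly (the exponents combine exactly to $L^{2,2}_{e_l}$), and for \eqref{eq:XX-YY-XY-1}--\eqref{eq:XX-YY-XY-2} correct the off-by-$O(\epsilon)$ integrability by interpolation, with \eqref{eq:YX} for $\theta\in(0,1)$ obtained by combining the $\theta=0$ and $\theta=1$ endpoints. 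The only deviation is cosmetic: where you interpolate against the Strichartz-derived endpoints $L^{\frac{d+\sigma}{d}}_{t,x}$ (for \eqref{eq:XX-YY-XY-1}) and $L^{\frac{d+\sigma}{\sigma}}_{t,x}$ (for \eqref{eq:XX-YY-XY-2}), the paper interpolates against $L^{3/2}_{t,x}$ and $L^{\infty}_{t,x}$ obtained by Bernstein from the $L^\infty_t L^{\frac{2}{1-\epsilon}}_x$, resp.\ $L^\infty_t L^2_x$, components; both choices give an $O(\epsilon)$ interpolation weight because $d>\sigma$ places $L^2_{t,x}$ strictly between the two endpoints, and both correctly absorb the resulting power into $N_+^{|O(\epsilon)|}$ using $N_+\gtrsim N_-\geq 1$.
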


\begin{proof}
The proof of all the  bounds follows from the Hölder
inequality and the definition of the norms $X^{\epsilon}_{N_{\pm}}$
and $Y^{\epsilon}_{N_{\pm}}$ and Lemma \ref{lem:bernstein}. To prove \eqref{eq:XX-YY-XY-1}, notice that
\[ 
\begin{ltae}
\|\LP_{N_+} h_+\LP_{N_-} h_- \|_{L^{\frac{2}{1 - \epsilon}}_t  L^{\frac{2}{1 - \epsilon}}_x ( I \times \R^d )}
\\
\qquad \begin{ltae}
\leq \sum_{l = 1}^d \Big\|\LP_{N_+} U^{\Lap}_{e_l} h_+\LP_{N_-} h_- \Big\|_{L^{\frac{2}{1 - \epsilon}}_t L^{\frac{2}{1 - \epsilon}}_x ( I \times \R^d)}
\\
= \sum_{l = 1}^d \Big\|\LP_{N_+} U^{\Lap}_{e_l} h_+\LP_{N_-} h_- \Big\|_{L_{e_l}^{\frac{2}{1 - \epsilon}, \frac{2}{1 - \epsilon}} ( I \times \R^d )}
\\
\leq \sum_{l = 1}^d \Big\|\LP_{N_+} \UP_{e_l}^{\Lap} h_+  \Big\|_{L_{e_l}^{\infty, \frac{2}{1 - \epsilon}} (I \times\R^d)} \|\LP_{N_-} h_- \|_{L_{e_l}^{\frac{2}{1 - \epsilon}, \infty} (I \times \R^d)}
\\
\leq N_+^{- \frac{\sigma - 1}{2}} N_-^{\frac{d - 1}{2}} \|\LP_{N_+} h_+ \|_{X^{\epsilon}_{N_+} (I)} \|\LP_{N_-} h_- \|_{X^{\epsilon}_{N_-} (I)}
\end{ltae}
\end{ltae}
\]
and \eqref{eq:XX-YY-XY-1} follows after an interpolation with
\[
\begin{ltae}
\|\LP_{N_+} h_+\LP_{N_-} h_- \|_{L^\frac{3}{2} L^\frac{3}{2}_x ( I \times \R^d)}
\\
\qquad\begin{ltae}
\leq \|\LP_{N_+} h_+ \|_{L^3_t L^3_x ( I \times \R^d )} \|\LP_{N_-} h_- \|_{L^3_t L^3_x ( I \times \R^d )}
\\
\lesssim \|\LP_{N_+} h_+ \|_{L^\infty_t L^3_x ( I \times \R^d )} \|\LP_{N_-} h_- \|_{L^\infty_t L^3_x ( I \times \R^d )}
\\
\lesssim N_+^{\frac{1-3\epsilon}{6}} N_-^{\frac{1-3\epsilon}{6}} \|\LP_{N_+} h_+ \|_{L^\infty_t L^{\frac{2}{1-\epsilon}}_x ( I \times \R^d )} \|\LP_{N_-} h_- \|_{L^\infty_t L^{\frac{2}{1-\epsilon}}_x ( I \times \R^d )}
\end{ltae}
\end{ltae}
\]
where we used Bernstein inequality, Lemma \ref{lem:bernstein}.
To prove \eqref{eq:XX-YY-XY-2}, we use 
\[
\begin{ltae}
\|\LP_{N_+} h_+\LP_{N_-} h_- \|_{L^{\frac{2}{1 +  \epsilon}}_t  L^{\frac{2}{1 +  \epsilon}}_x ( I \times \R^d )}
\\ \qquad \begin{ltae}
\leq \sum_{l = 1}^d \Big\|\LP_{N_+} U^{\Lap}_{e_l} h_+\LP_{N_-} h_- \Big\|_{L^{\frac{2}{1 +  \epsilon}}_t L^{\frac{2}{1 +  \epsilon}}_x ( I \times \R^d )}
\\
= \sum_{l = 1}^d \Big\|\LP_{N_+} U^{\Lap}_{e_l} h_+\LP_{N_-} h_- \Big\|_{L_{e_l}^{\frac{2}{1 +  \epsilon}, \frac{2}{1 +  \epsilon}} ( I \times \R^d )}
\\
\leq \sum_{l = 1}^d \Big\|\LP_{N_+} \UP_{e_l}^{\Lap} h_+  \Big\|_{L_{e_l}^{\frac{2}{\epsilon}, 2} (I \times \R^d)} \| \LP_{N_-} h_- \|_{L_{e_l}^{2, \frac{2}{\epsilon}} (I \times \R^d)}\\
\leq N_+^{- \frac{\sigma - 1}{2}} N_-^{\frac{\sigma - 1}{2}} \|\LP_{N_+}  h_+ \|_{Y^{\epsilon}_{N_+} (I)} \|\LP_{N_-} h_-  \|_{Y^{\epsilon}_{N_-} (I)}\,,
\end{ltae}
\end{ltae}
\]
 interpolated with 
\[
\|\LP_{N_+} h_+\LP_{N_-} h_- \|_{L^{\infty}_t L^{\infty}_x ( I \times \R^d )}
\begin{ltae}
\leq \|\LP_{N_+} h_+  \|_{L^{\infty}_t L^{\infty}_x ( I \times \R^d)} \|\LP_{N_-} h_- \|_{L^{\infty}_t L^{\infty}_x ( I \times \R^d )}
\\
\leq N_+^{\frac{d}{2}} N_-^{\frac{d}{2}} \|\LP_{N_+} h_+ \|_{L^{\infty}_t L^2_x ( I \times \R^d )} \|\LP_{N_-} h_- \|_{L^{\infty}_t L^2_x ( I \times \R^d )}
\\
\leq N_+^{\frac{d}{2}} N_-^{\frac{d}{2}} \|\LP_{N_+} h_+\|_{Y^{\epsilon}_{N_+} (I)} \|\LP_{N_-} h_- \|_{Y^{\epsilon}_{N_-}(I)} \,,
\end{ltae}
\]
where we again used Lemma \ref{lem:bernstein}.
Next,  \eqref{eq:XX-YY-XY-3} follows from 
\[
\|\LP_{N_+} h_+\LP_{N_-} h_- \|_{L^2_t L^2_x ( I \times \R^d )}
\begin{ltae}
\leq \sum_{l = 1}^d \Big\|\LP_{N_+} \UP^{\Lap}_{e_l} h_+\LP_{N_-} h_-\Big\|_{L_{e_l}^{2, 2} ( I \times \R^d)}
\\
\leq \sum_{l = 1}^d \Big\|\LP_{N_+} \UP^{\Lap}_{e_l} h_+ \Big\|_{L_{e_l}^{\infty, \frac{2}{1-\epsilon}} (I \times \R^d)} \| \LP_{N_-} h_- \|_{L_{e_l}^{2, \frac{2}{\epsilon}} (I \times \R^d)}
\\
\leq \sum_{l = 1}^d N_+^{- \frac{\sigma - 1}{2}} N_-^{\frac{\sigma -1}{2}} \|\LP_{N_+} h_+ \|_{X^{\epsilon}_{N_+} (I)} \|\LP_{N_-} h_- \|_{Y^{\epsilon}_{N_-} (I)} .
\end{ltae} \]
Finally, \eqref{eq:YX} is a consequence of an interpolation between 
\begin{align*}
\|\LP_{N_+} h_+\LP_{N_-} h_- \|_{L^2_t L^2_x ( I \times \R^d )}
 & \leq \|\LP_{N_+} h_+\|_{L_{e_l}^{\frac{2}{\epsilon}, 2} (I \times\R^d)} \|\LP_{N_-} h_- \|_{L_{e_l}^{\frac{2}{1-\epsilon}, \infty} (I \times \R^d)}
\\
 & \leq  N_+^{- \frac{\sigma - 1}{2}} N_-^{\frac{d - 1}{2}} \|\LP_{N_+} h_+ \|_{Y_{N_+}^{\epsilon} (I)} \|\LP_{N_-} h_- \|_{X_{N_-}^{\epsilon} (I)}
\end{align*}
and
\[
\begin{ltae}
\| \LP_{N_+} h_+\LP_{N_-} h_- \|_{L^2_t L^2_x ( I \times \R^d )}
\\ \qquad
\begin{ltae}
\leq \|\LP_{N_+} h_+  \|_{L_t^{2 \frac{d + \sigma}{\sigma}} L_x^{2 \frac{d + \sigma}{\sigma}} (I \times \R^d)} \|\LP_{N_-} h_- \|_{L_t^{2 \frac{d +\sigma}{d}} L_x^{2 \frac{d + \sigma}{d}} (I \times \R^d)}
\\
\leq   \|\LP_{N_+} h_+\|_{Y_{N_+}^{\epsilon} (I)} \|\LP_{N_-} h_- \|_{X_{N_-}^{\epsilon}(I)} \,.
\end{ltae}
\end{ltae}
\]
\end{proof}

\begin{proof*}{Proof of \Cref{prop:fixed-frequency-4linear}}
For ease of notation we drop the complex conjugation, as it does not
influence our estimates.  Furthermore, by symmetry, we assume that
\[
N_1 \geq N_2 \geq N_3 \geq N_4.
\]
Note that unless $N_1 \lesssim N_1$ one has $N_2 + N_3 + N_4 < 2^{- 10} N_1$ and thus
\[
\bar{(\LP_{N_2} h_2)} (\LP_{N_3} h_3) \bar{(\LP_{N_4} h_4)}
\]
is supported on $| \xi | < 2^{- 2} N_1$ in Fourier space. This implies
\[
\Big| \int_{I \times \R^d} (\LP_{N_1} h_1) \bar{(\LP_{N_2} h_2)} (\LP_{N_3}  h_3) \bar{(\LP_{N_4} h_4)} \dd t \dd x \Big| = 0 \,,
\]
and therefore we restrict to proving bound
\eqref{eq:fixed-frequency-4linear} only when
\[
N_1 \approx N_2 \geq N_3 \geq N_4 .
\]
We consider all the cases
$(h_1, h_2, h_3, h_4) \in \{ F, v, v_{\ast} \}^4$ with $v_{\ast}$ appearing
exactly once. Henceforth, we assume that
$\|\LP_{N_j} h_j \|_{N_j, \epsilon} = N_j^{\ms{- A}_j}$,
$j \in \{ 1, 2, 3, 4 \}$, set
\[
\xs = \frac{d - \sigma}{2} + \tilde{\epsilon} + | O(\epsilon) | \,,
\]
and prove the bound
\[
\Lambda (h_1, h_2, h_3, h_4) := \Big| \int_{I \times \R^d} (\LP_{N_1} h_1) (\LP_{N_2}
h_2) (\LP_{N_3} h_3) (\LP_{N_4} h_4) \dd t \dd x \Big| \lesssim N_1^{- \tilde{\epsilon}}\,.
\]
In general, by Cauchy-Schwarz inequality
\[ \Lambda (h_1, h_2, h_3, h_4) \leq
\big \|\LP_{N_1} h_1\LP_{N_4} h_4\big\|_{L^2_t  L^2_x ( I \times \R^d )}
\big\| \LP_{N_2} h_2\LP_{N_3} h_3 \big\|_{L^2_t L^2_x ( I \times \R^d )}
\]
and for each term on the right-hand side we use $N_1 \approx N_2$ and \Cref{lem:bilinear-2-2-bound} to obtain
\begin{equation}\label{eq:fbfce}
\Lambda (h_1, h_2, h_3, h_4) \leq  \prod_{j = 1}^4 N_j^{\ms{B}_j - \ms{ A}_j} \lesssim N_1^{|O(\epsilon)|} N_1^{\ms{B}_1 + \ms{B}_2  - \ms{ A}_1 - \ms{ A}_2}
N_3^{\ms{B}_3  - \ms{ A}_3}   N_4^{\ms{B}_4  - \ms{ A}_4} \,, 
\end{equation}   
where $\ms{ A}_j$ was defined in \eqref{eq:dfaj} and $\|\LP_{N_j} h_j \|_{N_j, \epsilon} = N_j^{\ms{- A}_j}$, and $\ms{B}_j$ is an appropriate exponent originating in estimates in 
\Cref{lem:bilinear-2-2-bound}. Let us provide details for various cases. 

\noindent
\textbf{Case 1.} Assume there exists $v$ with higher frequency than $v_*$.
More precisely, assume there is $j < k$ such that $h_j = v$ and $h_k = v_*$. Of course by the symmetry in \eqref{eq:fbfce} and $N_1 \approx N_2$
we also allow $j = 2$ and $k = 1$. 

After an exchange of $h_1$ for $h_2$ if necessary, we can assume $(j, k) \not \in \{(1, 4), (2, 3)\}$. 
 Then, $\ms{ A}_j = \xs$ and $\ms{ A}_k = -\xs$, which implies $\ms{ A}_j + \ms{ A}_k = 0$.
Observe that if $k \in \{1, 2\}$, then $j \in \{1, 2\}$ and by $N_1 \approx N_2$ and 
$\ms{ A}_j + \ms{ A}_k = 0$, one has 
\begin{equation}\label{eq:sscn2}
N_j^{\ms{B}_j-\ms{ A}_j} N_k^{\ms{B}_k-\ms{ A}_k} \lesssim N_1^{\ms{B}_j + \ms{B}_k} \,.
\end{equation}
If $k \in \{3, 4\}$, then $\ms{B}_k >0$, and therefore
$\ms{B}_k - \ms{A}_k > 0$. Since $N_j \geq N_k$ and
$\ms{ A}_j + \ms{ A}_k = 0$, one has
\begin{equation}\label{eq:sscn3}
N_j^{\ms{B}_j-\ms{ A}_j} N_k^{\ms{B}_k-\ms{ A}_k} \lesssim N_j^{\ms{B}_j + \ms{B}_k}  \lesssim N_1^{\ms{B}_j + \ms{B}_k}\,,
\end{equation}
where in the last inequality we used that either $j \in \{1, 2\}$ or $j > 2$ which implies 
$\ms{B}_j, \ms{B}_k \geq 0$. 

If $(h_1, h_4) \not \in \{(v, v), (v_*, v), (v, v_*) \}$, then the exponents of $N_+$ and $N_-$
in Lemma \ref{lem:bilinear-2-2-bound} add up to zero (we take $\theta=0$ in \eqref{eq:YX}), which in our notation means 
\begin{equation}\label{eq:coeob}
\ms{B}_1 + \ms{B}_4 = 0 \,.
\end{equation}
Similarly, \eqref{eq:coeob} holds for indices $(1, 4)$ replaced by
$(2, 3)$. For any $l$ with $l \neq j, k$, we have
$-\ms{A}_l \leq 0 < \tilde{\epsilon} + |O(\epsilon)|$.  If
$l \geq 3$, $\ms{B}_l > 0$, and we choose small
$\epsilon, \tilde{\epsilon} > 0$ such that
$\ms{B}_l - \tilde{\epsilon} - |O(\epsilon)| > 0$. If
$l \leq 2$, we choose any small $\epsilon, \tilde{\epsilon} > 0$ so that
$\ms{A}_l \geq \tilde{\epsilon} + |O(\epsilon)|$.  Then,
$N_l^{\ms{B}_l - \ms{A}_l} \leq N_l^{\ms{B}_l - \tilde{\epsilon} - |O(\epsilon)|} \lesssim N_1^{\ms{B}_l -
  \tilde{\epsilon} - |O(\epsilon)|}$, and after using $\ms{B}_1 + \cdots + \ms{B}_4 =0$ we obtain
\begin{align*}
\Lambda (h_1, h_2, h_3, h_4) &\leq  N_1^{|O(\epsilon)|} N_1^{ \sum_j \ms{B}_j - \tilde{\epsilon} - |O(\epsilon)|} 
 \lesssim N_1^{-\tilde{\epsilon}}
\end{align*}
as desired. 

If $(h_1, h_4) = (v, v)$, then we set $j = 1$ (higher frequency than $v_*$) and by Lemma \ref{lem:bilinear-2-2-bound},
$\ms{B}_1 = -\frac{\sigma - 1}{2}$ and $\ms{B}_4 - \ms{ A}_4 = \frac{d-1}{2} - \xs \leq \frac{\sigma - 1}{2} - \tilde{\epsilon} - | O(\epsilon) |$. Thus, we transformed the 
problem to $(h_1, h_4) = (v, F)$ from above (after $A_l$ was replaced by $\tilde{\epsilon} + |O(\epsilon)|$). 

Assume $(h_1, h_4) = (v, v_*)$. If 
$h_2 = F$, we exchange $h_1$ with $h_2$ (recall $N_1 \approx N_2$), to obtain 
$(h_1, h_4) \neq (v, v_*)$. Thus, we can assume $h_2 = v$. 
Then, by \eqref{eq:sscn2} and \eqref{eq:sscn3} with $(j, k) = (1, 4)$ and $N_3 \leq N_1$
 we have 
\begin{equation}
N_1^{\ms{B}_1 - \ms{ A}_1 + \ms{B}_2 - \ms{ A}_2}
N_3^{\ms{B}_3 - \ms{ A}_3} N_4^{\ms{B}_4 - \ms{ A}_4} \lesssim 
N_1^{\ms{B}_1 + \ms{B}_2 + \ms{B}_4  - \xs + \max\{\ms{B}_3 - \ms{ A}_3, 0\}} \,.  
\end{equation} 
In addition, 
\begin{equation}
   \ms{B}_1 + \ms{B}_4  - \xs < -\frac{\sigma - 1}{2} + \frac{d - 1}{2} - \frac{d - s}{2} = 0 
\end{equation}
and if $h_3 = F$, then $\ms{B}_3 - \ms{ A}_3 = \frac{\sigma - 1}{2} - S \leq \frac{\sigma - 1}{2}$ and if $h_3 = v$, then $\ms{B}_3 - \ms{ A}_3 = \frac{d - 1}{2} - \xs <  \frac{\sigma - 1}{2}$. Since $\ms{B}_2 = -  \frac{\sigma - 1}{2}$, we obtain 
\begin{equation}
N_1^{\ms{B}_1 - \ms{ A}_1 + \ms{B}_2 - \ms{ A}_2}
N_3^{\ms{B}_3 - \ms{ A}_3} N_4^{\ms{B}_4 - \ms{ A}_4} \lesssim 
N_1^{- \tilde{\epsilon} - |O(\epsilon)|} \,,
\end{equation} 
for any small $\epsilon, \tilde{\epsilon} > 0$ such that $\tilde{\epsilon} + |O(\epsilon)| < \xs - \frac{d - s}{2}$. 

Finally, if $(h_1, h_4) = (v_*, v)$, then there is $v$ with higher frequency than $v_*$, implying $h_2 = v$. 
As above, for $l \in \{3, 4\}$, we have $\ms{B}_l - \ms{ A}_l < \frac{\sigma - 1}{2}$, and
therefore
\begin{equation}
N_1^{\ms{B}_1 - \ms{ A}_1 + \ms{B}_2 - \ms{ A}_2}
N_3^{\ms{B}_3 - \ms{ A}_3} N_4^{\ms{B}_4 - \ms{ A}_4} \lesssim 
N_1^{\ms{B}_1 + \ms{B}_2 + \sigma - 1} 
\end{equation} 
and the assertion follows as above, because $\ms{B}_1 = \ms{B}_2 = - \frac{\sigma - 1}{2}$.

In the rest of the proof we only consider the cases, where $v_*$ has higher frequency than any $v$. Hence, there can be at most two functions $v$, since $N_1 \approx N_2$. 

\textbf{Case 2.} Assume that among $(h_j)_{j = 1}^4$ are two functions $v$. Specifically, 
assume $(h_1, h_2, h_3, h_4) = (v_{\ast}, F, v, v)$ or $(F, v_{\ast}, v, v)$. Since, we can interchange $N_1$ and $N_2$, 
we can just treat  $(h_1, h_2, h_3, h_4) = (v_{\ast}, F, v, v)$. By \eqref{eq:fbfce}, Lemma \ref{lem:bilinear-2-2-bound}, and $\xs \leq \frac{d - 1}{2}$ with $N_3 \geq N_4$ one has 
for any $\theta \in [0, 1]$
  \[ | \Lambda (v_{\ast}, F, v, v) | \begin{ltae}
       \lesssim N_1^{| O (\epsilon) | + \xs - \frac{\sigma - 1}{2}} N_2^{-
       S - \frac{\sigma - 1}{2} \theta} N_3^{\mf{- s} + \frac{d - 1}{2}
       \theta} N_4^{- \xs + \frac{d - 1}{2}}\\
       \lesssim N_1^{| O (\epsilon) | + \xs - \frac{\sigma - 1}{2} - S -
       \frac{\sigma - 1}{2} \theta} N_3^{- 2 \xs + \frac{d - 1}{2} + \frac{d -
       1}{2} \theta}\\
       \lesssim N_1^{| O (\epsilon) | + \tilde{\epsilon} + \frac{d + 1 -
       2 \sigma}{2} - S - \frac{\sigma - 1}{2} \theta} N_3^{- \frac{d + 1 - 2
       \sigma}{2} + \frac{d - 1}{2} \theta}
     \end{ltae} \]
  
  If $d + 1 - 2\sigma \leq 0$, then set $\theta = 0$ and drop $N_3$ term, since it has a negative power to obtain
  \[ | \Lambda (v_{\ast}, F, v, v) | \lesssim N_1^{| O (\epsilon) | +
     \tilde{\epsilon} - S}
      \lesssim N_1^{- \tilde{\epsilon}} \]
  as long as \ $S > | O (\epsilon) | + 2 \tilde{\epsilon}$. If $\frac{d
  + 1}{2} > \sigma$, then set $\theta = \frac{d + 1 - 2 \sigma}{d - 1}$ (note
  that $\theta \in (0, 1)$) to obtain
  \[ | \Lambda (v_{\ast}, F, v, v) | \lesssim N_1^{| O (\epsilon) | +
     \tilde{\epsilon} + \frac{d + 1 - 2 \sigma}{2} - S - \frac{\sigma -
     1}{2} \frac{d + 1 - 2 \sigma}{d - 1}} \lesssim N_1^{-
     \tilde{\epsilon}} \]
  as long as 
  \begin{equation}
  S> | O (\epsilon) | + 2 \tilde{\epsilon} + \Big( \frac{d + 1}{2} - \sigma \Big) \Big( \frac{d - \sigma}{d - 1} \Big) \,.
  \end{equation} 
  
 \textbf{Case 3.} Assume there is exactly one $j$ with $h_j = v$. 
 We distinguish two cases.
 
  \textbf{Case 3a.} Suppose $(h_1, h_2, h_3, h_4) \in \{( v_*, F, F, v), (F, v_*, F,  v), (F, F, v_*, v)\}$, that is, $h_4 = v$. 
  Since, we can interchange $N_1$ and $N_2$, we  only treat option $( v_*, F, F, v)$
  and $(F, F, v_*, v)$.
 To treat $( v_*, F, F, v)$, we apply \eqref{eq:YX} with parameter $\theta \in [0, 1]$ to be determined below and use $N_1 \approx N_2$
to have
  \[ | \Lambda (v_{\ast}, F,  F, v) | \begin{ltae}
       \lesssim N_1^{| O (\epsilon) |  - S -
       \theta \frac{\sigma - 1}{2} + \xs - \frac{\sigma - 1}{2}}
       N_3^{- S + \frac{\sigma - 1}{2}} N_4^{- \xs + \theta
       \frac{d - 1}{2}}
     \end{ltae} \]
Set $\theta = \frac{d - \sigma}{d - 1}$ and since $N_3, N_4 \leq N_1$, and $\xs > \frac{d - \sigma}{2}$, 
  \[ | \Lambda (v_{\ast}, F, F, v) | \lesssim N_1^{| O (\epsilon) |
     + \tilde{\epsilon} -  S - \frac{d-\sigma}{d-1}\frac{\sigma - 1}{2} + \frac{d - \sigma}{2} - \frac{\sigma - 1}{2} + \max\{- S + \frac{\sigma - 1}{2}, 0\}}
     \lesssim N_1^{- \tilde{\epsilon}} 
\]
as long as 
   \[ S >    \frac{d - \sigma}{2}  -\frac{d-\sigma}{d-1}\frac{\sigma - 1}{2}  - \frac{\sigma - 1}{2}
  + 2 \tilde{\epsilon} + | O
     (\epsilon) |  \]
and
\[
S >  \frac{1}{2} \Big(  \frac{d - \sigma}{2}  - \frac{(d-\sigma)}{(d-1)}\frac{\sigma - 1}{2} \Big)  + 2 \tilde{\epsilon} + | O(\epsilon) | =  \frac{(d-\sigma)^2}{4(d - 1)} + 2 \tilde{\epsilon} + | O(\epsilon) | \,.
\]
If $(h_1, h_2, h_3, h_4) =  (F, F, v_*, v)$, then similarly
\[ | \Lambda (F,  F, v_{\ast},  v) | \begin{ltae}
\lesssim N_1^{| O (\epsilon) |  - 2S - \theta_1 \frac{\sigma - 1}{2} - \theta_2 \frac{\sigma - 1}{2}} N_3^{ \xs + \theta_1 \frac{d - 1}{2}} N_4^{- \xs + \theta_2  \frac{d - 1}{2}}
\end{ltae} \]
and by choosing $\theta_1 = 0$ and $\theta_2 = \frac{d - \sigma}{d - 1}$, and using $N_3 \leq N_1$ we have
\[ | \Lambda (F,  F, v_{\ast},  v) | \begin{ltae}
\lesssim N_1^{| O (\epsilon)|  + \tilde{\epsilon}  - 2S - \frac{d - \sigma}{d - 1} \frac{\sigma - 1}{2} +  \frac{d - \sigma}{2} }
\\
\leq  N_1^{| O (\epsilon) | + \tilde{\epsilon} -  S - \frac{d-\sigma}{d-1}\frac{\sigma - 1}{2} + \frac{d - \sigma}{2} - \frac{\sigma - 1}{2} + \max\{- S + \frac{\sigma - 1}{2}, 0\}} 
\end{ltae} \]
and the assertion follows from the previous case. 
 
\textbf{Case 3b.} Suppose $(h_1, h_2, h_3, h_4) \in \{( v_*, F,  v, F), (F, v_*,  v, F)\}$, that is, $h_3 = v$. 
Since, we can interchange $N_1$ and $N_2$, we  only treat 
$( v_*, F,  v, F)$.
We apply \eqref{eq:YX} with parameter $\theta \in [0, 1]$ to be determined below and use $N_1 \approx N_2$
to have
\[ 
  | \Lambda (v_{\ast}, F, v, F) | \begin{ltae}
\lesssim N_1^{| O (\epsilon) |  + \xs - S - \theta \frac{\sigma - 1}{2} - \frac{\sigma - 1}{2}} N_3^{- \xs + \theta \frac{d - 1}{2}} N_4^{- S + \frac{\sigma - 1}{2}}
\end{ltae} 
\]
Set $\theta = \frac{d - \sigma}{d - 1}$ and since $N_3, N_4 \leq N_1$ we have that 
\[ 
  | \Lambda (v_{\ast}, F, F, v) | 
  \lesssim 
  N_1^{| O (\epsilon) |+ \tilde{\epsilon} -  S - \frac{(d-\sigma)}{(d-1)} \frac{\sigma - 1}{2} + \frac{d - \sigma}{2} - \frac{\sigma - 1}{2} + \max\{- S + \frac{\sigma - 1}{2}, 0\}},
\]
and we conclude as in Case 3a.

\textbf{Case 4.} Assume
\[
(h_1, h_2, h_3, h_4) \in \{(v_{\ast}, F, F, F), (F, v_{\ast},  F, F), (F, F, v_{\ast},  F), (F, F, F, v_{\ast})\},\]
that is, there is no $v$ in the product. Again,
since we can interchange $N_1$ and $N_2$, the first two options are equivalent, and we show that the latter two follow from the first one.

It holds that
\[ | \Lambda (v_{\ast}, F, F, F) | \begin{ltae}
\lesssim N_1^{| O (\tilde{\epsilon}) | + \xs - \frac{\sigma - 1}{2}} N_2^{- S - \frac{\sigma - 1}{2}} N_3^{- S + \frac{\sigma - 1}{2}} N_4^{- S + \frac{\sigma - 1}{2}}
\\
\lesssim N_1^{| O (\tilde{\epsilon}) | + \tilde{\epsilon} - S - (\sigma - 1) + \xs - 2 \min \Big( S - \frac{\sigma - 1}{2}, 0 \Big)} \lesssim N_1^{- \tilde{\epsilon}} \,,
\end{ltae} \]
where the last inequality holds  as long as
\[
S + 2 \min \Big( S - \frac{\sigma - 1}{2}, 0 \Big) > \frac{d -   \sigma}{2} - (\sigma - 1) + 2 \tilde{\epsilon} + | O (\epsilon) |,
\]
and, in particular, if
\[ \begin{cases}
S > \frac{d - \sigma}{6} + 2 \tilde{\epsilon} + | O (\epsilon) |,
\\
S > \big( \frac{d + 1}{2} - \sigma \big) - \frac{\sigma - 1}{2} + 2
\tilde{\epsilon} + | O (\epsilon) | .
\end{cases} \]
Also,
\[ | \Lambda (F, F, v_{\ast},  F) | \begin{ltae}
\lesssim N_1^{| O (\tilde{\epsilon}) | - S - \frac{\sigma - 1}{2}} N_2^{- S - \frac{\sigma - 1}{2}} N_3^{\xs + \frac{\sigma - 1}{2}} N_4^{- S + \frac{\sigma - 1}{2}}
\\
\lesssim N_1^{| O (\tilde{\epsilon}) | + \tilde{\epsilon} - 2S - \frac{\sigma - 1}{2} + \xs -  \min \Big( S - \frac{\sigma -  1}{2}, 0 \Big)}
\\
\leq N_1^{| O (\tilde{\epsilon}) | + \tilde{\epsilon} - S - (\sigma - 1) + \xs - 2 \min \Big( S - \frac{\sigma - 1}{2}, 0 \Big)} \,,
\end{ltae} \]
and the assertion follows from the previous one.   Moreover,
\[ | \Lambda (F, F,  F, v_{\ast}) | \begin{ltae}
\lesssim N_1^{| O (\tilde{\epsilon}) | - S - \frac{\sigma - 1}{2}} N_2^{- S - \frac{\sigma - 1}{2}} N_3^{-S + \frac{\sigma - 1}{2}} N_4^{\xs + \frac{\sigma - 1}{2}}
\\
\lesssim N_1^{| O (\tilde{\epsilon}) | + \tilde{\epsilon} - 3S  + \xs }
\\
\leq N_1^{| O (\tilde{\epsilon}) | + \tilde{\epsilon} - S - (\sigma - 1) + \xs - 2 \min \Big( S - \frac{\sigma -       1}{2}, 0 \Big)}  \,,
\end{ltae} \]
and the assertion follows as above.

Let us  summarize the restrictions on $S$.

\begin{table*}[h]
\begin{tabular}{|p{2.0cm}|p{7.0cm}|}
\hline
Case 1. & $S > 2 \tilde{\epsilon} + | O (\epsilon)
|$                                                       \\
\hline
Case 2. & $\begin{cases}
S > 2 \tilde{\epsilon} + | O (\tilde{\epsilon}) | \\
S > \Big( \frac{d - \sigma}{d - 1} \Big) \Big( \frac{d + 1}{2}
- \sigma \Big) + 2 \tilde{\epsilon} + | O (\tilde{\epsilon})
|
\end{cases}$                  \\
\hline
Case 3. & $\begin{cases}
S > \frac{d - \sigma}{2}  -\frac{d-\sigma}{d-1}\frac{\sigma - 1}{2}  - \frac{\sigma - 1}{2}
+ 2 \tilde{\epsilon} + | O
(\epsilon) | \\
S > \frac{(d - \sigma)^2}{4 (d - 1)} + 2 \tilde{\epsilon} + | O
(\epsilon) |
\end{cases}$                  \\
\hline
Case 4. & $\begin{cases}
S > \frac{d - \sigma}{6} + 2 \tilde{\epsilon} + | O (\epsilon)
|, \\
S > \Big( \frac{d + 1}{2} - \sigma \Big) - \frac{\sigma - 1}{2} + 2
\tilde{\epsilon} + | O (\epsilon) |
\end{cases}$                  \\
\hline
\end{tabular}
\end{table*}

We claim that the restrictions $\frac{d - \sigma}{6}$ and  $\Big( \frac{d - \sigma}{d - 1}
  \Big) \Big( \frac{d + 1}{2} - \sigma \Big)$ are the strongest ones for any small $\epsilon, \tilde{\epsilon} > 0$. Indeed,  
\[
\Big( \frac{d - \sigma}{d - 1}  \Big) \Big( \frac{d + 1}{2} - \sigma \Big) \geq \frac{d - \sigma}{2}  -\frac{d-\sigma}{d-1}\frac{\sigma - 1}{2}  - \frac{\sigma - 1}{2}
\]
  and
\[
  \Big( \frac{d - \sigma}{d - 1}  \Big) \Big( \frac{d + 1}{2} - \sigma \Big) \geq  \Big( \frac{d + 1}{2} - \sigma \Big) - \frac{\sigma - 1}{2} 
\]
are equivalent after standard algebraic manipulations to $\sigma \geq 1$. Also, 
\[
\frac{(d - \sigma)^2}{4 (d - 1)}
\begin{ltae}
= \frac{3}{4} \frac{d - \sigma}{6} +  \frac{1}{4} \Big( \Big( \frac{d - \sigma}{d - 1}  \Big) \Big( \frac{d + 1}{2} - \sigma \Big)  \Big)
\\
\leq  \max \Big\{ \frac{d - \sigma}{6},    \Big( \frac{d - \sigma}{d - 1}  \Big) \Big( \frac{d + 1}{2} - \sigma \Big)  \Big\} \,,
\end{ltae}
\]
 as claimed. 
 
To compare the two largest bounds, we note that  $\frac{d - \sigma}{6} \geq \Big( \frac{d - \sigma}{d - 1}\Big) \Big( \frac{(d + 1)}{2} - \sigma \Big)$ if and only if  $\sigma \geq \frac{d+ 2}{3}$ and the assertion follows. 
\end{proof*}

\section{Almost sure local well-posedness of cubic
  NLS}\label{sec:almost-sure-well-posedness}

This section is devoted to the proof of Theorem \ref{thm:main}, that is, to
the local almost sure well-posedness of the cubic NLS.
Theorem \ref{thm:main} is an immediate
consequence of the local well-posedness for a forced cubic equation and the
bound on $\|e^{- i t P} f^{\omega} \|_{Y^{S, \epsilon} (I)}$, established in
\Cref{prop:Y-bounds}. Specifically, we consider the problem
\begin{equation}\label{eq:finaleq}
\begin{cases}
(i \partial_t - \Lap) v = \pm |F + v|^2 (F + v),
\\
v (0) = 0
\end{cases} 
\end{equation}
for some $F : \R \times \R^d \rightarrow \mathbb{C}$ such that
$\|F\|_{Y^{S, \epsilon} (I)} < \infty$. We have the following local well-posedness
result.

\begin{proposition} \label{prop:exlocforced} Let $I$ be an open time interval
containing $0$ such that $| I | < T_0$ for some small fixed $T_0 > 0$
depending only on $\Lap$ and $d$ defined in Lemma
\ref{lem:strichartz}.  Fix $S, \epsilon > 0$.  Then, there exists
$0 < \delta \ll 1$ such that if $F \in Y^{S, \epsilon} (I)$ satisfies
\[
\|F\|_{Y^{S, \epsilon} (I)} \leq \delta,
\]
then there exists a unique solution
\[
v \in C \big( I ; H^{\frac{d - \sigma}{2}} (\R^d) \big) \cap X^{\xs,  \epsilon} (I)
\]
to \eqref{eq:finaleq} on $I \times \R^d$.
\end{proposition}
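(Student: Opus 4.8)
The plan is a contraction mapping argument in a small ball of $X^{\xs,\epsilon}(I)$. First I would fix $\xs > S_{\mrm{crit}}$, the integrability parameter $\epsilon$, and the auxiliary exponent $\tilde\epsilon>0$ exactly as furnished by \Cref{prop:fixed-frequency-4linear} (so that the quadrilinear bound \eqref{eq:fixed-frequency-4linear}, hence also \Cref{cor:main-nonlinear-estimate}, is available), shrinking $\epsilon$ further if necessary so that the loss $N^{|O(\epsilon)|}$ produced by \Cref{prop:main-linear-estimate} obeys $|O(\epsilon)|\le\tilde\epsilon/8$. Then I would introduce the solution map
\[
\Phi(v)(t)\eqd \mp i\int_0^t e^{-i(t-s)\Lap}\,|F+v|^2(F+v)(s)\,\dd s ,
\]
which by the Duhamel formula \eqref{eq:duhamel} produces the unique solution of \eqref{eq:finaleq} with forcing $\pm|F+v|^2(F+v)$; thus a fixed point of $\Phi$ in $X^{\xs,\epsilon}(I)$ is precisely the desired $v$.

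Next I would establish the self-mapping estimate. Expanding $|F+v|^2(F+v)=\sum_{h_1,h_2,h_3\in\{F,v\}} h_1\bar h_2 h_3$ and applying \Cref{cor:main-nonlinear-estimate} to each of the eight cubic monomials gives
\[
\big\||F+v|^2(F+v)\big\|_{X^{\xs+\tilde\epsilon/8,\epsilon}(I)^{*}}\lesssim \big(\|F\|_{Y^{S,\epsilon}(I)}+\|v\|_{X^{\xs,\epsilon}(I)}\big)^3 .
\]
Since the dual norm $\|\cdot\|_{X^{s,\epsilon}(I)^{*}}$ is non-increasing in $s$ and $\xs+|O(\epsilon)|\le\xs+\tilde\epsilon/8$, \Cref{prop:main-linear-estimate} then yields $\|\Phi(v)\|_{X^{\xs,\epsilon}(I)}\lesssim(\delta+\|v\|_{X^{\xs,\epsilon}(I)})^3$ whenever $\|F\|_{Y^{S,\epsilon}(I)}\le\delta$. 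For the Lipschitz bound I would set $w=v_1-v_2$ and telescope $|F+v_1|^2(F+v_1)-|F+v_2|^2(F+v_2)$ into a finite sum of trilinear monomials, each carrying exactly one factor $w$ and two factors from $\{F,v_1,v_2\}$; the estimate of \Cref{cor:main-nonlinear-estimate} applies verbatim with any functions of finite $X^{\xs,\epsilon}(I)$ norm in the slots that were occupied by $v$, giving
\[
\|\Phi(v_1)-\Phi(v_2)\|_{X^{\xs,\epsilon}(I)}\lesssim \|w\|_{X^{\xs,\epsilon}(I)}\big(\delta+\|v_1\|_{X^{\xs,\epsilon}(I)}+\|v_2\|_{X^{\xs,\epsilon}(I)}\big)^2 .
\]
Hence, for $\delta$ small enough (depending only on $\Lap,d,\sigma,\xs,\epsilon$ through the implicit constants), $\Phi$ maps the closed ball $\{\|v\|_{X^{\xs,\epsilon}(I)}\le\delta\}$ into itself and is a contraction there, so the Banach fixed point theorem produces a unique $v\in X^{\xs,\epsilon}(I)$ in that ball solving \eqref{eq:finaleq}.

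It remains to upgrade this fixed point to a curve in $C(I;H^{S_{\mrm{crit}}})$ and to promote uniqueness to the full intersection class. For the time continuity I would argue from $e^{it\Lap}v(t)=\mp i\int_0^t e^{is\Lap}h(s)\,\dd s$ with $h=|F+v|^2(F+v)\in X^{\xs+\tilde\epsilon/8,\epsilon}(I)^{*}$: the dual (adjoint) form of the estimates of \Cref{sec:linear-nonhomogeneous} — precisely the inequality $\big\|\int_{J'}e^{is\Lap}\LP_N h\,\dd s\big\|_{L^2}\lesssim N^{|O(\epsilon)|}\|\LP_N h\|_{X^{\epsilon}_N(J')^{*}}$ derived inside the proof of \Cref{lem:linear-evolution-disjoint-times} — gives, after summing over $N$, $\|e^{it\Lap}v(t)-e^{it'\Lap}v(t')\|_{H^{S_{\mrm{crit}}}}\lesssim\|h\|_{X^{S_{\mrm{crit}}+|O(\epsilon)|,\epsilon}([t\wedge t',\,t\vee t'])^{*}}$, and the interval superadditivity \eqref{eq:Xstar-concat-sim} of the dual norm forces the right-hand side to vanish as $t\to t'$; together with the strong continuity of $t\mapsto e^{it\Lap}$ on $H^{S_{\mrm{crit}}}$ this gives $v\in C(I;H^{S_{\mrm{crit}}})$. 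Uniqueness in $C(I;H^{S_{\mrm{crit}}})\cap X^{\xs,\epsilon}(I)$ would follow by taking two solutions, noting that on a sufficiently short initial subinterval $[0,T_1]$ both lie in the ball where $\Phi$ contracts — hence coincide there — and then propagating the equality along a finite partition of $I$.

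I expect the genuinely delicate step to be the $H^{S_{\mrm{crit}}}$-valued time continuity: the contraction itself is a mechanical composition of \Cref{prop:main-linear-estimate}, \Cref{cor:main-nonlinear-estimate}, and \Cref{prop:fixed-frequency-4linear}, whereas the continuity requires an absolute-continuity property of the dual-space norm on shrinking time intervals, which is not formal and relies on the concatenation structure of the $X^{\epsilon}_N$ spaces exploited in \Cref{lem:linear-evolution-christ-kiselev}. The only other point needing care is the parameter bookkeeping — arranging that the loss $N^{|O(\epsilon)|}$ coming from \Cref{prop:main-linear-estimate} is absorbed by the gain $N^{\tilde\epsilon/8}$ built into \Cref{cor:main-nonlinear-estimate}, which is exactly why one needs $\xs$ strictly above $S_{\mrm{crit}}$.
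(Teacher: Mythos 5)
Your proposal follows the same contraction-mapping route as the paper — define $\Phi(v)(t)=\mp i\int_0^t e^{-i(t-s)\Lap}|F+v|^2(F+v)\,\dd s$, bound the cubic nonlinearity in $X^{\xs+\tilde\epsilon/8,\epsilon}(I)^*$ via \Cref{cor:main-nonlinear-estimate}, transfer back to $X^{\xs,\epsilon}(I)$ via \Cref{prop:main-linear-estimate}, and close a fixed-point argument in a small ball. The paper's own proof stops there (essentially) and asserts both the $C(I;H^{S_{\mrm{crit}}})$ membership and the uniqueness in the full intersection class without further comment, so the genuine content of your write-up beyond the paper is the attempted justification of the $H^{S_{\mrm{crit}}}$-valued continuity and the uniqueness-by-propagation step; the latter is the argument the paper defers to the proof of \Cref{thm:main}.

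One step you should tighten. You write that the superadditivity \eqref{eq:Xstar-concat-sim} of the dual norm ``forces'' $\|h\|_{X^{S_{\mrm{crit}}+|O(\epsilon)|,\epsilon}([t',t])^*}\to 0$ as $t'\to t$. Superadditivity alone does not give this: a singular set function can be superadditive over disjoint pieces yet fail to vanish on shrinking intervals (think of a Dirac mass). What you actually need, and what the paper itself quietly invokes when constructing the dyadic grid inside the proof of \Cref{lem:linear-evolution-christ-kiselev}, is continuity of $t\mapsto\|h\|_{\tilde X^\epsilon_N([a,t])^*}$. Given that continuity, the monotone function $\Psi(t)=\|h\|_{\tilde X^\epsilon_N([a,t])^*}^{2/(1+\epsilon)}$ dominates $\|h\|_{\tilde X^\epsilon_N([t',t])^*}^{2/(1+\epsilon)}\le\Psi(t)-\Psi(t')$ by \eqref{eq:Xstar-concat-sim}, and continuity of $\Psi$ then gives the vanishing you want. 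This continuity ultimately rests on dominated convergence applied to the primal $X^\epsilon_N$-norms (the dual norm is a supremum over a fixed unit ball of time-localised pairings), but it is a separate input, not a consequence of superadditivity; you correctly flag the step as delicate, just misattribute the mechanism. The rest of the argument — parameter bookkeeping so that $|O(\epsilon)|\le\tilde\epsilon/8$, the telescoping of the cubic difference into monomials with a single factor of $w=v_1-v_2$, and the propagation of uniqueness across a finite partition — is sound and matches the paper's intent.
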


\begin{proof}
Fix small $\delta > 0$ determined below, and
use a fixed point argument to construct our solution. We define
\[
\mathcal{B}_{\delta} = \Big\{v \in X (I) : \|v\|_{X^{\xs, \epsilon} (I)} \leq 2 \delta\Big\}
\]
and the map
\[
\Phi (v) (t) = \mp \int_0^t e^{- i (t - s) \Lap} |F + v|^2 (F + v) (s) \dd s,
\]
which, by Duhamel's formula, is a solution of \eqref{eq:finaleq}. Next,
\Cref{prop:main-linear-estimate} and Corollary \ref{cor:main-nonlinear-estimate} imply for any sufficiently small $\epsilon, \tilde{\epsilon}, \delta > 0$ and any  $v \in \mathcal{B}_{\delta}$,
\[
\| \Phi (v)\|_{X^{\xs, \epsilon} (I)} \lesssim \||F + v|^2 (F + v)\|_{X^{\xs + \tilde{\epsilon}, \epsilon} (I)^*}
\lesssim \|v\|_{X^{\xs, \epsilon} (I)}^3 +\|F\|_{Y^{S, \epsilon} (I)}^3 \leq 2 \delta \,,
\]
and in addition, for any $v_1, v_2 \in \mathcal{B}$,
\[
\begin{ltae}
\| \Phi (v_1) - \Phi (v_2)\|_{X^{\xs, \epsilon} (I)}
\\ \qquad
\begin{ltae}
\lesssim  \Big\| |F + v_1 |^2 (F + v_1) - |F + v_2 |^2 (F + v_2)\Big\|_{X^{\xs + \tilde{\epsilon}, \epsilon} (I)^*}
\\
\lesssim  \|v_1 - v_2 \|_{X^{\xs, \epsilon} (I)} \Big(\|v_1 \|^2_{X^{\xs , \epsilon} (I)} + \|v_2 \|_{X^{\xs, \epsilon} (I)}^2 +\|F\|_{Y^{S, \epsilon} (I)}^2\Big)
\\       
\lesssim \dfrac{\|v_1 - v_2 \|_{X^{\xs, \epsilon} (I)}}{2} .
\end{ltae}
\end{ltae}
\]
So $\Phi : \mathcal{B}_{\delta} \rightarrow \mathcal{B}_{\delta}$ is a contraction with respect to the $X^{\xs, \epsilon} (I)$ norm for sufficiently small $\epsilon > 0$. Thus, there exists a unique solution to \eqref{eq:finaleq}.
\end{proof}

Next, we state a continuity property for the norms $X^{\xs ,\epsilon}(I)$ and $Y^{S,\epsilon}(I)$ with respect to the interval which follows from the dominated convergence theorem.

\begin{lemma}
\label{lem:norm-continuity-t}
Let $I\subset \R$ be a closed interval. Fix $S,\epsilon>0$. Assume that $\|v\|_{X^{\xs ,\epsilon} (I)}<\infty $  and $\|F\|_{Y^{S,\epsilon}(I)}<\infty$. Then the mappings
\[
t\in I \rightarrow \|v\|_{X^{\xs ,\epsilon}\big([\inf I,t]\big)},\qquad t\in I \rightarrow \|F\|_{Y^{S,\epsilon}\big([\inf I,t]\big)}
\]
and
\[
t\in I \rightarrow \|v\|_{X^{\xs ,\epsilon}\big([t,\sup I]\big)},\qquad t\in I \rightarrow \|F\|_{Y^{S,\epsilon}\big([t,\sup I]\big)}
\]
are continuous. We can also allow half-open and open intervals $I$.
\end{lemma}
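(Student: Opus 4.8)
The plan is to break $X^{\xs,\epsilon}$ and $Y^{S,\epsilon}$ into their finitely many mixed-Lebesgue building blocks and apply the dominated convergence theorem in the interval variable to each. Set $\Psi(t)=\|v\|_{X^{\xs,\epsilon}([\inf I,t])}$; the argument for $Y^{S,\epsilon}$ and for $F$ is the same. First I would make two reductions. Monotonicity: since $[\inf I,t]\subset[\inf I,t'']$ for $t\le t''$, every component of every $X^\epsilon_N$ is non-decreasing in $t$, and $N^{2\xs}\|\LP_N v\|_{X^\epsilon_N([\inf I,t])}^2\le N^{2\xs}\|\LP_N v\|_{X^\epsilon_N(I)}^2$ with the right side summable; by the Weierstrass $M$-test it therefore suffices to prove continuity of $t\mapsto\|\LP_N v\|_{X^\epsilon_N([\inf I,t])}$ for each fixed $N$, hence of each of the finitely many summands in \eqref{eq:defn:X-norm}. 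Regularity: each $\LP_N v$ is frequency-localized and, in our applications, continuous in time with values in a Sobolev space ($F=e^{-it\Lap}f^\omega\in C(I;H^S)$ by strong continuity of the propagator, $v\in C(I;H^{(d-\sigma)/2})$ by \Cref{prop:exlocforced}); since a frequency-localized $L^2$-function is continuous and vanishes at infinity (its Fourier transform is compactly supported, hence in $L^1$), the families $\{\LP_N v(s,\cdot)\}_{s\in I}$ and $\{\LP_N F(s,\cdot)\}_{s\in I}$ are equibounded, equicontinuous, and equi-vanishing at infinity in $x$.

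Next I would handle every component in which the time variable appears only inside a finite-exponent integral and the outermost operation is also an integral: the Strichartz piece $L_t^{2\frac{d+\sigma}{d}}L_x^{2\frac{d+\sigma}{d}}$ in \eqref{eq:defn:X-norm}, and all components of $Y_N^\epsilon$ in \eqref{eq:defn:Y-norm} except $L_t^\infty L_x^2$. Each such component over $[\inf I,t]$ equals $\|g\,\1_{\{s\le t\}}\|$ for the corresponding mixed norm over $\R_s\times\R^d_x$, with $g$ equal to $\LP_N v$ or $\UP_{e_l}^{\Lap}\LP_N v$. For $t_n\to t$ the indicators converge pointwise off the null set $\{s=t\}$, the integrand is dominated by the $\1_{\{s\in I\}}$-version which is integrable in every variable because the norm over $I$ is finite (and, by Tonelli, for a.e. value of the outermost variable), so dominated convergence in the inner time integral, continuity of the relevant power map, and dominated convergence in the remaining variable(s) give continuity of the component.

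What remains are the components carrying an essential supremum: $L_t^\infty L_x^{2/(1-\epsilon)}$ and the directional pieces $\|v\|_{L_{e_l}^{2/(1-\epsilon),\infty}}$ and $\|\UP_{e_l}^{\Lap}v\|_{L_{e_l}^{\infty,2/(1-\epsilon)}}$ in \eqref{eq:defn:X-norm}, together with $L_t^\infty L_x^2$ in \eqref{eq:defn:Y-norm}. \emph{Left}-continuity is automatic: for $t_n\uparrow t$ the suprema run over the increasing sets $[\inf I,t_n]\uparrow[\inf I,t)$ (which differs from $[\inf I,t]$ by a null set), the essential supremum commutes with increasing limits, and monotone convergence passes this through the outer integral. \emph{Right}-continuity is the genuinely delicate point, since ``dominated convergence from above'' fails for merely measurable functions. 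Here I would use the regularity recorded above: the slices $s\mapsto\|\LP_N v(s)\|_{L_x^{2/(1-\epsilon)}}$ and, for a.e. fixed $x_l$, $s\mapsto\sup_{x'}|\LP_N v(s,x_l,x')|$ and $s\mapsto\int_{\R^{d-1}}|\UP_{e_l}^{\Lap}\LP_N v(s,x_l,x')|^{2/(1-\epsilon)}\,dx'$ are continuous, so the essential supremum over $[\inf I,t]$ becomes an honest maximum depending continuously on $t$; the equi-vanishing at infinity provides the uniform smallness of the contribution of large $|x_l|$ needed to pass the limit through the outer $x_l$-norm by dominated convergence. For $F$ the $L_t^\infty L_x^2$ piece is even trivial, as $\|\LP_N F(s)\|_{L_x^2}=\|\LP_N f^\omega\|_{L_x^2}$ is constant in $s$.

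Assembling the components and summing in $N$ via the $M$-test yields continuity of $t\mapsto\|v\|_{X^{\xs,\epsilon}([\inf I,t])}$ and $t\mapsto\|F\|_{Y^{S,\epsilon}([\inf I,t])}$; the maps $t\mapsto\|\cdot\|_{([t,\sup I])}$ follow by the symmetric argument with $\1_{\{s\ge t\}}$, and the half-open and open interval cases are identical (and slightly easier, the degenerate endpoint being excluded). I expect the right-continuity of the essential-supremum components to be the only non-routine step: it is exactly where the finiteness hypothesis has to be supplemented by the joint continuity and decay of the Littlewood-Paley pieces, all the rest being a direct application of the dominated convergence theorem together with monotonicity.
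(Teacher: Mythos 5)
The paper offers no argument for this lemma beyond the one-line remark that it ``follows from the dominated convergence theorem,'' so your write-out is already substantially more than the text supplies, and it exposes a real defect in the statement. As you observe, DCT handles the finite-exponent components, but right-continuity of the essential-supremum components is not a consequence of the finiteness hypothesis alone: taking $v(t,x)=\1_{(0,1]}(t)\,g(x)$ on $I=[-1,1]$ with a fixed $g\in L^{2/(1-\epsilon)}_x$, the map $t\mapsto\|v\|_{L^\infty_t L^{2/(1-\epsilon)}_x([-1,t])}$ jumps at $t=0$, so the lemma as literally stated is false. Your repair --- bringing in the time-continuity and $x$-decay of the Littlewood--Paley slices available in the actual application (the propagator group for $F$, and $v\in C(I;H^{(d-\sigma)/2})$ from \Cref{prop:exlocforced}) --- is the right one and should in effect be added to the hypotheses of the lemma.

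Two small points on your argument. First, for the local-smoothing component the essential supremum is in the outer $x_l$-variable, with the time integral inside; what passes a decreasing limit through this outer sup is the equicontinuity-plus-uniform-decay argument you actually sketch, not dominated convergence, so the phrase should be adjusted. Second, $\UP^{\Lap}_{e_l}$ is a sharp frequency cutoff, so $\UP^{\Lap}_{e_l}\LP_{N} v(s,\cdot)$ does not inherit rapid spatial decay; but its Fourier transform is still supported in a fixed compact annulus and lies in $L^1$ there, and the compactness of $\{\widehat{v(s)}\}_{s\in I}$ in $L^2$ coming from time-continuity gives the uniform-in-$s$ vanishing at infinity you need, which you implicitly use. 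The reduction by the $M$-test in $N$, the DCT treatment of the finite-exponent pieces, and the MCT argument for left-continuity are routine and correct.
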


We are now in position to prove Theorem \eqref{thm:main}.

\begin{proof}[Proof of Theorem \ref{thm:main}]
We are looking for a solution to \eqref{eq:eqintro} of the form
\[ u (t) = e^{- i t \Lap} f^{\omega} + v (t), \]
where $v$ is a solution to
\begin{equation}
\begin{cases}
(i \partial_t - \Lap) v = \pm \big|e^{i t \Lap} f^{\omega} + v\big|^2 (e^{- i t \Lap} f^{\omega} + v) & \text{ on } I \times \R^d,
\\
v (0) = 0.
\end{cases}
\label{eq:eqf2}
\end{equation}
Since by \Cref{prop:Y-bounds} one has $\big\|e^{i t \Lap} f^{\omega} \big\|_{Y^{S, \epsilon} (\R)}
< \infty$, for a.e. $\omega \in \Omega$, then by Lemma \ref{lem:norm-continuity-t}, we
can find an interval $I^{\omega} \subset \R$ with $0 \in I^{\omega}$ such
that
\[
\|e^{- i t \Lap} f^{\omega} \|_{Y^{S, \epsilon} (I^{\omega})} \leq \delta,
\] where
$0 < \delta \ll 1$ is the constant given in Proposition \ref{prop:exlocforced},
and consequently there exists a unique solution $v \in C \big(I^{\omega} ;
\dot{H}_x^{\alpha} (\R^d)\big) \cap X^{\xs, \epsilon} (I^{\omega})$ to \eqref{eq:eqf2} for a.e.
$\omega \in \Omega$. To show global uniqueness, assume that $w$ is a
solution of \eqref{eq:eqf2} belonging to $v \in C \big( I ; H^{\frac{d - \sigma}{2}} (\R^d) \big) \cap X^{\xs, \epsilon} (I)$. Then, from the
continuity and $w (0) = 0$ follows that $w$ is small for short times, and
therefore by the uniqueness of small solutions, $w y= v$. Iterating this
procedure, we obtain uniqueness on $I^{\omega}$.
\end{proof}

{\raggedright \printbibliography}

\end{document}